\title[Frobenius monoidal functors induced by Frobenius extensions]{Frobenius monoidal functors induced by Frobenius extensions of Hopf algebras}
\date{March 30, 2026}
\author{Johannes Flake}
\address{Mathematical Institute, University of Bonn, Endenicher Allee 60, 53115 Bonn, Germany}
\email{flake@math.uni-bonn.de}
\urladdr{https://johannesflake.net}
\author{Robert Laugwitz}
\address{School of Mathematical Sciences,
University of Nottingham, University Park, Nottingham, NG7 2RD, UK}
\email{robert.laugwitz@nottingham.ac.uk}
\urladdr{https://www.nottingham.ac.uk/mathematics/people/robert.laugwitz}
\author{Sebastian Posur}
\address{University of Münster,
Fachbereich Mathematik und Informatik,
Einsteinstraße 62,
48149 Münster,
Germany}
\email{sebastian.posur@uni-muenster.de}
\newcommand{\superimpose}[2]{%
  {\ooalign{$#1\@firstoftwo#2$\cr\hfil$#1\@secondoftwo#2$\hfil\cr}}}
\newcommand{\leftexpsub}[3]{{\vphantom{#3}}^{#1}_{#2}{#3}}
\newcommand{\lYD}[1]{\leftexpsub{#1}{#1}{\mathbf{YD}}}
\newcommand{\oop}{\mathrm{op}}
\newcommand{\ov}[1]{\overline{#1}}
\newcommand{\un}[1]{\underline{#1}}
\newcommand{\lmod}[1]{#1\text{-}\mathbf{mod}}
\newcommand{\lMod}[1]{#1\text{-}\mathbf{Mod}}
\newcommand{\rModint}[2]{\mathbf{Mod}_{#1}\text{-}#2}
\newcommand{\rModloc}[2]{\mathbf{Mod}^{\mathrm{loc}}_{#1}\text{-}#2}
\newcommand{\projr}[2]{\mathrm{rproj}_{{#1},{#2}}}
\newcommand{\projl}[2]{\mathrm{lproj}_{{#1},{#2}}}
\newcommand{\projrnoarg}{\mathrm{rproj}}
\newcommand{\projlnoarg}{\mathrm{lproj}}
\newcommand{\iprojr}[2]{\mathrm{rproj}^{F,F\dashv G}_{{#1},{#2}}}
\newcommand{\iprojl}[2]{\mathrm{lproj}^{F,F\dashv G}_{{#1},{#2}}}
\newcommand{\Alg}{\mathbf{Alg}}
\newcommand{\cha}{\operatorname{char}}
\newcommand{\CoAlg}{\mathbf{CoAlg}}
\newcommand{\Drin}{\operatorname{Drin}}
\newcommand{\Frob}{\mathbf{Frob}}
\newcommand{\Hom}{\operatorname{Hom}}
\newcommand{\Ind}{\operatorname{Ind}}
\newcommand{\CoInd}{\operatorname{CoInd}}
\newcommand\id{{\operatorname{id}}}
\newcommand{\isomorph}{\stackrel{\sim}{\longrightarrow}}
\newcommand{\lax}{\operatorname{lax}}
\newcommand{\oplax}{\operatorname{oplax}}
\newcommand{\unit}{\operatorname{unit}}
\newcommand{\counit}{\operatorname{counit}}
\newcommand{\one}{\mathds{1}}
\newcommand{\Res}{\operatorname{Res}}
\newcommand{\tr}{\operatorname{tr}}
\newcommand{\Set}[1]{\left\lbrace #1\right\rbrace}
\newcommand{\inner}[1]{\left\langle #1\right\rangle}
\newcommand{\sfC}{\mathsf{C}}
\newcommand{\sfG}{\mathsf{G}}
\newcommand{\sfK}{\mathsf{K}}
\newcommand{\sfN}{\mathsf{N}}
\newcommand{\sfH}{\mathsf{H}}
\newcommand{\OH}{\mathcal{O}_\mathsf{H}}
\providecommand{\fr}[1]{\mathfrak{#1}}
\newcommand{\mC}{\mathbb{C}}
\newcommand{\mZ}{\mathbb{Z}}
\newcommand{\cC}{\mathcal{C}}
\newcommand{\cD}{\mathcal{D}}
\newcommand{\cZ}{\mathcal{Z}}
\newtheoremstyle{mystyle}
  {0.5cm}                   
  {0.5cm}                   
  {\normalfont}           
  {}                      
  {\itfont\bfseries} 
  {:}                     
  {0.3cm}              
  {\thmname{#1}}
\newtheoremstyle{defstyle}
  {0.5cm}                   
  {0.5cm}                   
  {\normalfont}           
  {}     
  {\normalfont\bfseries}  
  {:}                     
  {0.3cm}              
  {\thmname{#1}\thmnumber{ #2}\thmnote{ (#3)}}
\newtheorem*{rep@theorem}{\rep@title}
\newcommand{\newreptheorem}[2]{%
\newenvironment{rep#1}[1]{%
 \def\rep@title{#2 \ref{##1}}%
 \begin{rep@theorem}}%
 {\end{rep@theorem}}}
\newtheorem{theorem}{Theorem}[section]
\newtheorem{proposition}[theorem]{Proposition}
\newtheorem{corollary}[theorem]{Corollary}
\newtheorem{lemma}[theorem]{Lemma}
\newtheorem*{theorem*}{Theorem}
\newtheorem{introtheorem}{Theorem}
\newtheorem{introcorollary}[introtheorem]{Corollary}
\theoremstyle{definition}
\newtheorem{definition}[theorem]{Definition}
\theoremstyle{remark}
\newtheorem{example}[theorem]{Example}
\newtheorem{remark}[theorem]{Remark}
\newtheorem{question}[theorem]{Question}
\numberwithin{equation}{section}
\tikzset{mylabel/.style={fill=white,font=\small},font=\small}
\subjclass[2020]{Primary 16T05; Secondary 16L60, 18M05, 18M15}
\keywords{Hopf algebra, Frobenius extension,  Frobenius monoidal functor, Drinfeld center, Yetter--Drinfeld module, Quantum group}
\begin{document}

\begin{abstract}
We show that induction along a Frobenius extension of Hopf algebras is a Frobenius monoidal functor in great generality, in particular, for all finite-dimensional and all pointed Hopf algebras. As an application, we show that induction functors from unimodular Hopf subalgebras to small quantum groups at roots of unity are Frobenius monoidal functors and classify such unimodular Hopf subalgebras. 
Moreover, we present stronger conditions on Frobenius extensions under which the induction functor extends to a \emph{braided} Frobenius monoidal functor on categories of Yetter--Drinfeld modules. We show that these stronger conditions hold for any extension of finite-dimensional semisimple and co-semisimple (or, more generally, unimodular and dual unimodular) Hopf algebras.
\end{abstract}

\maketitle

\section{Introduction}

Let $K\subset H$ be an extension of Hopf algebras over a field $\Bbbk$ and consider the strong monoidal restriction functor 
$$\Res_K^H\colon \lMod{H}\to \lMod{K}.$$
This functor has a left and a right adjoint, given by induction and coinduction
$$\Ind_K^H\colon \lMod{K}\to \lMod{H}, \qquad \CoInd_K^H\colon \lMod{K}\to \lMod{H}.$$
The extension $K\subset H$ is a \emph{Frobenius extension} if there is a natural isomorphism $\Ind_K^H\cong \CoInd_K^H$. In this case, $\Ind_K^H$ is a  \emph{Frobenius functor} \cites{CMZ,ShiU}, i.e.\,a simultaneous left and right adjoint to $\Res_K^H$. 
By \cite{KelDoc}, the functor $\Ind_K^H$ then has both a lax and an oplax monoidal structure
\begin{gather*}
    \lax_{V,U}\colon \Ind_{K}^H(V)\otimes \Ind_{K}^H(U)\to \Ind_{K}^H(V\otimes U),\\
    \oplax_{V,U}\colon \Ind_{K}^H(V\otimes U)\to \Ind_{K}^H(V)\otimes \Ind_{K}^H(U).
\end{gather*}
It is a natural question to ask what compatibilities these structures satisfy. It turns out that $\Ind_K^H$ is almost never a strong monoidal functor, where the lax and oplax structures are inverse to each other, but often a \emph{Frobenius monoidal functor}, where they satisfy a compatibility similar to the axioms of a Frobenius algebra.

\begin{introtheorem}[See \Cref{thm:Hopf-Frobenius-monoidal}, \Cref{thm-Ind-Frob-mon}]\label{intro-thm:A}
Assume that $H$ is either finite-dimensional or pointed.\footnote{Or assuming the more general condition that $K\subset H$ is an extension of \emph{right integral type}, \Cref{def:integral-type} or \cite{FMS}*{1.10.~Definition}.} If $K\subset H$ is a Frobenius extension, then $\Ind_K^H$ is a Frobenius monoidal functor.
\end{introtheorem}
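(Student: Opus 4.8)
The plan is to verify the two Frobenius compatibility axioms relating the structure maps $\lax_{V,U}$ and $\oplax_{V,U}$ that doctrinal adjunction \cite{KelDoc} already supplies on $\Ind_K^H$. I would first make both maps completely explicit. With $\Ind_K^H(V)=H\otimes_K V$, the oplax map is the one induced by the comultiplication, $\oplax_{V,U}(h\otimes v\otimes u)=(h_{(1)}\otimes v)\otimes(h_{(2)}\otimes u)$, which is well defined precisely because $K$ is a Hopf subalgebra. The lax map is inherited from the canonical lax structure of $\CoInd_K^H=\Hom_K(H,-)$, namely $(f,g)\mapsto\bigl(h\mapsto f(h_{(1)})\otimes g(h_{(2)})\bigr)$, transported along the Frobenius isomorphism $\Ind_K^H\cong\CoInd_K^H$. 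I would encode the latter through a Frobenius system: a $K$-bimodule map $E\colon H\to K$ together with a Casimir element $e=\sum_i x_i\otimes y_i\in H\otimes_K H$, and rewrite $\lax_{V,U}$ purely in terms of $E$, $e$, the action, and $\Delta$.

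Once both maps are in this explicit form, each Frobenius axiom reduces to an identity in $H$ involving only $E$, $e$, and the coproduct. I expect these to collapse to a single coherence condition asserting that the Casimir element is compatible with $\Delta$ — morally, that the integral implicit in $E$ and $e$ behaves like a two-sided integral after comultiplication. This is the condition I would isolate as \emph{right integral type}, and showing that it is exactly what forces the comultiplication-induced oplax structure and the integral-induced lax structure to be Frobenius-compatible (rather than merely to coexist as in \cite{KelDoc}) is the technical core of \Cref{thm:Hopf-Frobenius-monoidal}. A useful organizing device here is the object $\Ind_K^H(\one)=H\otimes_K\Bbbk$, which the Frobenius hypothesis makes simultaneously an algebra and a coalgebra in $\lMod{H}$; the Frobenius axioms for $\Ind_K^H$ should amount to this object being a Frobenius algebra, together with the projection formula $\Ind_K^H(V)\otimes M\cong\Ind_K^H(V\otimes\Res_K^H M)$, which lets one reduce the general axioms to the case of the unit.

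It then remains to show that the stated hypotheses guarantee right integral type. For $H$ finite-dimensional I would combine the Larson–Sweedler existence of nonzero integrals with the Fischman–Montgomery–Schneider description \cite{FMS} of when $K\subset H$ is a Frobenius extension: the Frobenius system can be built from a relative right integral, which is of right integral type essentially by construction. The case of $H$ pointed is more delicate because $H$ may be infinite-dimensional, so no global integral need exist; here I would exploit the structure theory of pointed Hopf algebras — in particular that the coradical is the group algebra of the grouplikes, and the good behaviour of pointed Hopf algebras under Hopf-subalgebra extensions — to produce the relative integral required for right integral type directly, without passing through a global integral on $H$.

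The main obstacle is twofold. First, carrying out the reduction of the two Frobenius axioms to the single integral-type condition, and proving that condition both necessary and sufficient for Frobenius-compatibility; the bookkeeping with $E$, $e$, and $\Delta$ is where the side- and antipode-conventions must be handled with care. Second, the pointed case: without finite-dimensionality one cannot invoke a genuine integral, so the argument must extract the required coherence from the grouplike elements and the coradical filtration, and it is there that I expect the most technical effort.
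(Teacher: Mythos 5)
Your overall architecture---make $\lax$ and $\oplax$ explicit via a Frobenius system, reduce the two Frobenius axioms to one identity on that system, then check the hypotheses supply it---does match the paper's route through \cite{FLP3}. But you misidentify the key coherence condition, and this is a genuine gap. The condition that makes the comultiplication-induced oplax structure and the $\tr$-induced lax structure Frobenius-compatible is \emph{not} ``right integral type''; it is the statement that the chosen Frobenius morphism $\tr\colon H\to K$ is a morphism of \emph{right} $H$-comodules, $\tr(h_{(1)})\otimes h_{(2)}=\tr(h)_{(1)}\otimes \tr(h)_{(2)}$ (equivalently, that the right projection formula morphisms from the two adjunctions are mutual inverses; see \Cref{prop:Hopf-right}). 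Right integral type (\Cref{def:integral-type}) is a separate \emph{existence} hypothesis on the extension --- a right integral $\lambda$ for $\ov{H}^*$ together with $\Lambda$ and the relative modular function $\chi$ --- whose role is only to produce, via Schneider's formula $\tr(h)=\lambda(\ov{h_{(1)}})h_{(2)}$ in \Cref{thm:Schneider}, a Frobenius morphism for which the comodule condition holds \emph{automatically} by coassociativity. Note also that the lax structure, and hence Frobenius monoidality, depends on the choice of $\tr$; a Frobenius morphism with the comodule property is unique up to scalar (\Cref{prop:Hopf-right}(3)), so your computation must land on this particular choice rather than a generic Frobenius system.

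More seriously, your expectation that the axioms collapse to the Casimir element behaving ``like a two-sided integral after comultiplication'' would, if pursued literally, prove a reduction that is too strong and is actually false for cases covered by the theorem. The two-sided condition $\lambda(\ov{h_{(1)}})h_{(2)}=h_{(1)}\lambda(\ov{h_{(2)}})$ characterizes \emph{central} Frobenius extensions (\Cref{prop:lambda-left-right}), which is what is needed for the braided Yetter--Drinfeld statement, not for plain Frobenius monoidality; it fails, for instance, for the infinite Taft-type extension of \Cref{ex:Taft-infinite} and for $\OH\subset U_\epsilon(\fr{g})$, both of which nevertheless give Frobenius monoidal induction functors. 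Relatedly, your proposed reduction to the unit object via the projection formula presupposes that the relevant projection formula morphisms are mutually inverse, which is precisely the condition at stake, so it cannot be used as a free organizing device. The remaining steps --- finite-dimensional via \cite{FMS} and Larson--Sweedler, pointed via the cocommutativity of the coradical to get a right integral on the finite-dimensional quotient $\ov{H}^*$ rather than on $H$ --- are sound in outline and agree with the paper's use of \Cref{cor:H-pointed} and \Cref{cor:H-fin-dim-integral}.
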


The study of Frobenius extensions of rings is by now a classical topic that goes back to \cites{NT,Kasch}. It was continued in the context of Hopf algebras by \cites{Sch,Farn,FMS}. The above result \Cref{intro-thm:A} builds on the work of \cites{Sch,FMS} using our results in \cite{FLP3}.  It contributes to the study of conditions on right adjoints of strong monoidal functors to give Frobenius monoidal functors \cites{Bal, Yad, FLP3}. 

\smallskip

There are several special cases when the conclusions of Theorem \ref{intro-thm:A} holds.

\begin{introcorollary}[See \Cref{cor:Ind-Frob-conditions} and \Cref{cor:Ind-Frob-conditions-fd}]
Assume that $H$ is finitely generated and faithfully flat over $K$ and that either one of the following conditions holds:
    \begin{enumerate}[(i)]
        \item $K$ is a normal Hopf subalgebra of $H$,
        \item $K\subset Z(H)$,
        \item $H$ is commutative.
\end{enumerate}
 Or, assume that $H$ is finite-dimensional, and one of the conditions (i)--(vii) holds, where:
\begin{enumerate}
        \item[(iv)] $\alpha_H(k)=\alpha_K(k)$, for all $k\in K$, where $\alpha_H$ and $\alpha_K$ are the distinguished grouplike elements of $H^*$ and $K^*$, respectively,
        \item[(v)] both $H$ and $K$ are unimodular,
        \item[(vi)] $H$ is semisimple.
    \end{enumerate}
Then $\Ind_K^H$ is a Frobenius monoidal functor.
\end{introcorollary}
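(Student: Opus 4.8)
The plan is to deduce everything from \Cref{intro-thm:A}. That result asserts that $\Ind_K^H$ is Frobenius monoidal as soon as two conditions hold: (a) $H$ is finite-dimensional or pointed, or more generally $K\subset H$ is of right integral type; and (b) $K\subset H$ is a Frobenius extension. So it suffices to verify (a) and (b) under each of the hypotheses (i)--(vi). Condition (a) is automatic in the finite-dimensional block (iv)--(vi), so there only (b) must be checked; for the finitely generated, faithfully flat block (i)--(iii) I would additionally confirm that the extension is of right integral type.

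First I would organize the conditions by logical implication in order to isolate the essential cases. In the finite-dimensional block I would prove $(\mathrm{vi})\Rightarrow(\mathrm{v})\Rightarrow(\mathrm{iv})$. The implication $(\mathrm{v})\Rightarrow(\mathrm{iv})$ is immediate: if $H$ and $K$ are unimodular then $\alpha_H=\varepsilon_H$ and $\alpha_K=\varepsilon_K$, whence $\alpha_H(k)=\varepsilon(k)=\alpha_K(k)$ for all $k\in K$. For $(\mathrm{vi})\Rightarrow(\mathrm{v})$ I would use that a semisimple Hopf algebra is unimodular, and that $K$ is semisimple as well: by Maschke there is a normalized left integral $\Lambda\in H$ with $\varepsilon(\Lambda)=1$, and $1\mapsto\Lambda$ splits $\varepsilon\colon H\to\Bbbk$ as a map of left $H$-modules, hence of left $K$-modules; since $H$ is free over $K$ by Nichols--Zoeller, the trivial module $\Bbbk$ is a direct summand of a projective $K$-module, so $\Bbbk$ is $K$-projective and $K$ is semisimple, hence unimodular. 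In the faithfully flat block I would note $(\mathrm{iii})\Rightarrow(\mathrm{ii})\Rightarrow(\mathrm{i})$: if $H$ is commutative then $K\subset H=Z(H)$, and any central Hopf subalgebra is normal since $\sum h_1 k\,S(h_2)=\varepsilon(h)k\in K$ and likewise for the right adjoint action.

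This reduces the statement to the two essential cases $(\mathrm{iv})$ and $(\mathrm{i})$. In case $(\mathrm{iv})$, finite-dimensionality of $H$ makes it free over $K$ by Nichols--Zoeller, so the extension is $\beta$-Frobenius by \cite{FMS}, and the matching of modular functions $\alpha_H|_K=\alpha_K$ is exactly the condition forcing the twist $\beta=\id$; hence $K\subset H$ is a genuine Frobenius extension and \Cref{intro-thm:A} applies. In case $(\mathrm{i})$, for a normal Hopf subalgebra that is finitely generated and faithfully flat I would invoke the results of \cites{Sch,FMS} to see both that $K\subset H$ is of right integral type and that normality forces the Frobenius twist to vanish, so that the extension is Frobenius; then \Cref{intro-thm:A}, in the right-integral-type form of its footnote, yields the claim.

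I expect the main obstacle to be controlling the modular twist $\beta$ of the Frobenius system and, in the infinite-dimensional normal case, verifying the right-integral-type hypothesis. The delicate point is that the abstract $\beta$-Frobenius structure of \cite{FMS} is only an \emph{untwisted} Frobenius extension once one proves $\beta=\id$, and this identification rests on the comparison of the distinguished grouplike elements, which is precisely where the hypotheses (i) and (iv) enter.
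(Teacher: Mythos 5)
Your proposal is correct and follows essentially the same route as the paper: reduce (ii) and (iii) to the normal case (i) and (v), (vi) to the modular-function condition (iv), verify right integral type and the vanishing of the twist (equivalently $\chi=\varepsilon_K$) via Schneider and Fischman--Montgomery--Schneider, and conclude from \Cref{thm:Hopf-Frobenius-monoidal}. The only differences are cosmetic: the paper computes $\chi=\varepsilon_K$ explicitly for normal Hopf subalgebras using the adjoint action, where you defer to the literature, and you spell out the Nichols--Zoeller splitting argument for the semisimplicity of $K$ that the paper cites.
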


Building on our work in \cites{FLP2,FLP3} and the theory of integrals of Hopf algebras, the second main result of this paper gives conditions when $\Ind_K^H$ lifts to a braided Frobenius monoidal functor on the \emph{Drinfeld centers} \cites{JS,Maj2} of the categories of modules over the Hopf algebras $K$ and $H$. Here, the Drinfeld center $\cZ(\lMod{H})$ is, equivalently, realized as the category of \emph{Yetter--Drinfeld modules} $\lYD{H}$ \cite{Yet}.

\begin{introtheorem}[See \Cref{thm:H*-unimodular}]\label{thm:B}
    Let $K\subset H$ be a Frobenius extension of finite-dimensional Hopf algebras. If the dual Hopf algebras $K^*$ and $H^*$ are unimodular, then  the functor
    \begin{gather*}
    \cZ(\Ind_K^H)\colon \lYD{K}\to \lYD{H},\quad (V,\delta)\mapsto (\Ind_K^H(V),\delta^{\Ind_K^H(V)}),\\
    \delta^{\Ind_K^H(V)}(h\otimes v)=h_{(1)}v^{(-1)}S(h_{(3)})\otimes h_{(2)}\otimes v^{(0)},\qquad v\in V, h\in H,
\end{gather*}
from \cite{FLP2}*{Corollary~7.2}  is a braided Frobenius monoidal functor.
\end{introtheorem}
An important special case of \Cref{thm:B} concerns (co)semisimple Hopf algebras. 

\begin{introcorollary}[See \Cref{cor:H-semisimple}]
Assume that $K\subset H$ is an extension of finite-dimensional Hopf algebras such that either
\begin{enumerate}
    \item[(i)] $H$ is semisimple and co-semisimple, or
    \item[(ii)] $H$ is semisimple and $\cha \Bbbk =0$,
\end{enumerate}
Then $\cZ(\Ind_K^H)$ is a braided Frobenius monoidal functor.
\end{introcorollary}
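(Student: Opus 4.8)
The plan is to deduce the corollary directly from \Cref{thm:B} by verifying its two hypotheses for the extension $K\subset H$: that it is a Frobenius extension, and that both $K^*$ and $H^*$ are unimodular. All of this will follow from standard structural facts about finite-dimensional (co)semisimple Hopf algebras, with no essentially new computation. As a first step I would collapse case (ii) into case (i): by the Larson--Radford theorem a finite-dimensional semisimple Hopf algebra over a field of characteristic zero is automatically co-semisimple (in fact involutive, $S^2=\id$), so under (ii) the algebra $H$ is again both semisimple and co-semisimple. It therefore suffices to treat case (i).

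Next I would show that both the algebra- and coalgebra-semisimplicity descend from $H$ to the Hopf subalgebra $K$. Since $H$ is finite-dimensional, the Nichols--Zoeller theorem yields that $H$ is free as a left $K$-module, say $H\cong K^{\oplus n}$; restricting the (semisimple) regular $H$-module along $K\subset H$ shows $K^{\oplus n}$ is semisimple over $K$, whence $K$ is a semisimple algebra. For co-semisimplicity I would dualize: the inclusion $K\hookrightarrow H$ is an injective coalgebra map, so its transpose $H^*\twoheadrightarrow K^*$ is a surjective algebra map; as $H$ is co-semisimple, $H^*$ is a semisimple algebra, and a quotient of a finite-dimensional semisimple algebra is semisimple, so $K^*$ is semisimple and $K$ is co-semisimple. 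Thus $K$ and $H$ are both semisimple and co-semisimple.

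I would then assemble the two hypotheses of \Cref{thm:B} from the single observation that a finite-dimensional semisimple Hopf algebra is unimodular: if $\Lambda$ is a left integral with $\varepsilon(\Lambda)=1$ and $\alpha$ denotes the distinguished grouplike of the dual, characterized by $\Lambda h=\alpha(h)\Lambda$, then applying $\varepsilon$ gives $\varepsilon(h)=\alpha(h)$, i.e.\ $\alpha=\varepsilon$. Applying this to $H$ and to $K$ shows $\alpha_H=\varepsilon_H$ and $\alpha_K=\varepsilon_K$, so the modular functions agree on $K$; this is condition (iv)/(v) of the corollary following \Cref{intro-thm:A} (see \Cref{cor:Ind-Frob-conditions-fd}), which guarantees that $K\subset H$ is a Frobenius extension. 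Applying the same fact to the semisimple algebras $H^*$ and $K^*$ shows that $H^*$ and $K^*$ are unimodular. With both hypotheses in hand, \Cref{thm:B} immediately gives that $\cZ(\Ind_K^H)\colon \lYD{K}\to\lYD{H}$ is a braided Frobenius monoidal functor.

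The delicate points here are structural rather than technical. The one place care is genuinely needed is passing semisimplicity to the subalgebra $K$: a subalgebra of a semisimple algebra need not be semisimple in general, and the argument only works because Nichols--Zoeller supplies freeness of $H$ over $K$. The other point worth flagging is that the identification of ``agreement of modular functions'' with the Frobenius property of the extension rests on the Fischman--Montgomery--Schneider theory underlying \Cref{intro-thm:A}, which I would simply invoke. Neither of these is a real obstacle, so the bulk of the proof is bookkeeping within the (co)semisimplicity--unimodularity dictionary before appealing to \Cref{thm:B}.
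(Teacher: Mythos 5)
Your overall route coincides with the paper's: reduce (ii) to (i) by Larson--Radford, establish that $H$, $K$, $H^*$, $K^*$ are all unimodular using (co)semisimplicity, deduce that $K\subset H$ is a Frobenius extension from $\alpha_H|_K=\alpha_K$ via \Cref{prop:FMS2}, and conclude with \Cref{thm:H*-unimodular} (the paper packages the dual-unimodularity step as \Cref{cor:co-Frobenius}). The one genuine gap is your justification that $K$ is semisimple. You argue that since the regular module $H$ is semisimple over $H$ and is isomorphic to $K^{\oplus n}$ as a $K$-module by Nichols--Zoeller, the $K$-module $K^{\oplus n}$ is semisimple, whence $K$ is a semisimple algebra. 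But restriction along a subalgebra does not preserve semisimplicity of modules, and freeness does not repair this: the two-dimensional subalgebra $K=\Bbbk 1+\Bbbk e_{12}\cong \Bbbk[x]/(x^2)$ of $M_2(\Bbbk)$ has $M_2(\Bbbk)\cong K^{\oplus 2}$ free of rank $2$ as a left $K$-module, yet $K$ is not semisimple and $K^{\oplus 2}$ is not a semisimple $K$-module, even though $M_2(\Bbbk)$ is semisimple over itself. So the inference from semisimplicity of the regular $H$-module together with freeness over $K$ to semisimplicity of the restricted module is invalid; it is precisely the Hopf-algebra structure, not freeness alone, that saves the statement.

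The correct use of Nichols--Zoeller here, as in the paper, is through integrals: freeness gives that a nonzero left integral $\Lambda_H$ of $H$ factors as $\Lambda_H=\Lambda_K\Lambda$ with $\Lambda_K$ a left integral of $K$ and $\Lambda\in H$; since $H$ is semisimple, $\varepsilon_H(\Lambda_H)\neq 0$ by the Larson--Sweedler/Maschke criterion, hence $\varepsilon_K(\Lambda_K)\neq 0$ and $K$ is semisimple. With that repair, the rest of your argument --- $K^*$ semisimple as a quotient of the semisimple algebra $H^*$, semisimple implies unimodular via $\varepsilon(\Lambda)\neq 0$, and the final appeal to \Cref{prop:FMS2} and \Cref{thm:H*-unimodular} --- is exactly the paper's proof.
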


We apply our results to various classes of examples of Frobenius extensions of Hopf algebras, including:
\begin{itemize}
\item Some examples of restricted Lie algebras, see \Cref{ex:restrictedLie}.
\item Induction along the extension $H\subset \Drin(H)$ for the \emph{Drinfeld double} $\Drin(H)$ of a unimodular Hopf algebra $H$ gives a Frobenius monoidal functor, see \Cref{sec:Drin-double}.
\item The inclusion of the large central subalgebra $\OH\subset U_\epsilon(\fr{g})$ into the \emph{De Concini--Kac--Procesi form of the quantized enveloping algebra} at a root of unity $\epsilon$ of odd order $\ell$ gives a Frobenius monoidal induction functor. This functor only extends to Yetter--Drinfeld categories as a braided lax or oplax monoidal functor but not as a braided Frobenius monoidal functor, see \Cref{sec:quantum-groups}.
\item If $\ell>3$ we show that any Hopf subalgebra $K\subset u_\epsilon(\fr{g})$ of the \emph{small quantum group} that yields a Frobenius extension is an extension of $u_\epsilon(\fr{h})$, for a Lie subalgebra $\fr{h}\subset\fr{g}$, by an enlargement of the Cartan part, see \Cref{sec:u-subHopf}.
\item Finally, we consider the \emph{Kac--Paljutkin algebra} $H_8$, an $8$-dimensional semisimple Hopf algebra, which contains the group algebra of the Klein four-group $\sfC_2\times \sfC_2$. We investigate the extension to Drinfeld centers of the induction functor along this Frobenius extension in detail and identify the simple objects in its image. Moreover, we use this functor to construct rigid Frobenius algebras in $\cZ(\lMod{H_8})$, see Sections~\ref{sec:KP-algebras}--\ref{sec:Frob-alg-KP}.
\end{itemize}

It is an interesting question which of the results of the present paper can be generalized to (finite abelian) tensor categories.
After completion of the present paper, \cite{JY} appeared which contains several results in this direction. In particular, \Cref{question1} and \Cref{question2} were answered in the affirmative in \cite{JY}*{Theorem 3.23}. We thank Harshit Yadav and Kenichi Shimizu for helpful comments on topics related to this work.\footnote{We also thank the anonymous referee for helping us improve this manuscripts, see especially \Cref{ex:small-Frob-big}.}

The paper is structured as follows: We start by including necessary preliminaries in \Cref{sec:prelim} and present results on Frobenius extensions of Hopf algebras, mostly obtained from \cites{Sch, FMS}, in \Cref{sec:Hopf}. The main results of the paper are found in \Cref{sec:main} while \Cref{sec:examples} is devoted to classes of examples. At the end, in \Cref{sec:open-questions}, we include some open questions.

\setcounter{tocdepth}{3}

\section{Preliminaries}\label{sec:prelim}

We briefly recall the concepts required in this paper and fix notation. Throughout, $\Bbbk$ denotes a field and further assumptions on $\Bbbk$ will be stated when needed. We refer the reader to \cites{Mon,Maj,Rad} for textbook accounts of the theory of Hopf algebras and \cite{EGNO} for the theory of monoidal and braided monoidal categories. 

\subsection{Hopf algebras and their module categories}

Let $H$ be a Hopf algebra over $\Bbbk$, by which we mean a bialgebra with product $m_H$, unit $1_H\in H$, coproduct $\Delta_H\colon  H\to H\otimes H$, and counit $\varepsilon_H\colon H\to \Bbbk$, that has an invertible antipode $S_H\colon H\to H$. We use adapted Sweedler's notation, with suppressed summation,
$$\Delta_H(h)=h_{(1)}\otimes h_{(2)}\in H\otimes H,$$
to denote the coproduct evaluated at $h\in H$. We use the standard convention to denote 
$$(\Delta_H\otimes \id)\Delta_H(h)=h_{(1)}\otimes h_{(2)}\otimes h_{(3)}=(\id\otimes\Delta_H)\Delta_H(h),$$
which is justified by coassociativity of $\Delta_H$.
A morphism of Hopf algebras $\varphi\colon K \to H$ is required to be a morphism of both algebras and coalgebras that commutes with the antipodes, i.e.\,$\varphi S_K=S_H\varphi$. We denote the $\Bbbk$-dual of a finite-dimensional Hopf algebra $H$ by $H^*$.

We will consider the monoidal category $\lMod{H}$ of left $H$-modules. The monoidal subcategory of finite-dimensional $H$-modules is denoted by $\lmod{H}$. On the tensor product $V\otimes W=V\otimes_\Bbbk W$ of two $H$-modules $V$ and $W$, $H$ acts via 
$h\cdot (v\otimes w)=h_{(1)}\cdot v\otimes h_{(2)}\cdot w$ and the tensor unit $\one$ is given by $\Bbbk$ with $H$-action via $h\cdot 1=\varepsilon(h)$, for $h\in H$.

\subsection{Induction and restriction functors}

Given a morphism of Hopf algebras $\varphi\colon K\to H$, consider the restriction functor
$$
G=\Res_\varphi\colon \lMod{H}\to \lMod{K}, \quad W\mapsto \left.W\right|_K.
$$
Restriction has a left and a right adjoint functor given by
\begin{align}
  L=\Ind_\varphi(V):=H\otimes_K V, \qquad R=\CoInd_\varphi(V)=\Hom_{K}(H,V),
\end{align}
for a $K$-module $V$. The left $H$-module structure on $\CoInd_\varphi(V)$ is
\begin{align}
    h\cdot f(g)=f(gh), \qquad f\in \Hom_{K}(H,V), ~~ h,g\in H.
\end{align}

In this paper, we will focus on the case of an inclusion $\iota\colon K\hookrightarrow H$ of Hopf algebras and denote the induction and restriction functors by 
$$\Ind_K^H=\Ind_\iota\quad\dashv\quad  \Res_K^H=\Res_\iota \quad\dashv\quad \CoInd_K^H=\CoInd_\iota.$$

\subsection{Lax, oplax, and Frobenius monoidal functors}

We will briefly recall the definitions of lax, oplax, strong and Frobenius monoidal functor between monoidal categories  $\cC$ and $\cD$. Let $\cC, \cD$ denote monoidal categories which, by Mac Lane's coherence theorem \cite{ML}, we can regard as strict monoidal for simplicity. A \emph{lax monoidal functor} is a functor $R: \cD \rightarrow \cC$ together with morphisms
\begin{equation}
    \lax_{X,Y}\colon RX \otimes RY \longrightarrow R(X \otimes Y),
\end{equation}
natural in $X,Y \in \cD$ and a morphism $\lax_{0}\colon \one_\cC\rightarrow R(\one_\cD)$
such that the associativity constraint 
\begin{equation}\label{eq:associativity}
   \lax_{X \otimes Y, Z} (\lax_{X, Y} \otimes \id)= \lax_{X, Y \otimes Z}(\id \otimes \lax_{Y,Z})
\end{equation}
holds for $X,Y,Z \in \cD$ as well as the  unitality constraints 
\begin{align}\label{eq:unitality}
    \lax_{\one,X}(\lax_0\otimes \id)=\id_{RX}, \qquad \lax_{X,\one}(\id\otimes \lax_0)=\id_{RX}.
\end{align}
Dually, one defines an \emph{oplax monoidal functor} as a functor $L: \cD \rightarrow \cC$ with morphisms
\begin{equation}
        \oplax_{X,Y}\colon L(X \otimes Y) \longrightarrow LX \otimes LY,
\end{equation}
natural in $X,Y$ and a morphism $\oplax_0\colon L(\one)\to \one$ satisfying the conditions obtained from \Cref{eq:associativity} and \Cref{eq:unitality} by duality.

A \emph{strong monoidal functor} can now be defined as a lax monoidal functor where $\lax_{X,Y}$ and $\lax_0$ are isomorphisms. 

The following result is due to \cite{KelDoc}, see also \cite{FLP2}*{Proposition~2.2} for details.
\begin{lemma}
    If $G$ is strong monoidal, $L$ left adjoint to $G$, and $R$ right adjoint to $G$. Then $L$ is oplax monoidal and $R$ is lax monoidal.
\end{lemma}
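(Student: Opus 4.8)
The plan is to construct the (op)lax structures directly from the adjunction data and strong monoidal structure of $G$, using the standard mate/transpose correspondence. Since $G$ is strong monoidal, its structure morphisms $G(X)\otimes G(Y)\to G(X\otimes Y)$ and $\one_\cC\to G(\one_\cD)$ are isomorphisms; I will denote their inverses by $\mu_{X,Y}\colon G(X\otimes Y)\to G(X)\otimes G(Y)$ and $\mu_0\colon G(\one_\cD)\to \one_\cC$. The first observation is purely formal: whenever $G$ carries a monoidal structure and $R$ is its right adjoint, one obtains candidate lax structure maps on $R$ as the \emph{mates} of $G$'s structure maps under the adjunction $G\dashv R$, and dually for $L\dashv G$ one obtains oplax structure maps on $L$. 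Concretely, I would define $\lax_{X,Y}\colon RX\otimes RY\to R(X\otimes Y)$ as the transpose of the composite $G(RX\otimes RY)\xrightarrow{\mu} GRX\otimes GRY\xrightarrow{\counit\otimes\counit} X\otimes Y$, where $\counit$ is the counit of $G\dashv R$, and $\lax_0\colon \one_\cC\to R(\one_\cD)$ as the transpose of $\mu_0^{-1}\colon \one_\cC\to G(\one_\cD)$ (equivalently, of the unit constraint). The oplax maps for $L$ are defined symmetrically using the unit of $L\dashv G$ together with $\mu$ and $\mu_0$.

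The second step is to verify the coherence axioms. For the lax structure on $R$, I would check the associativity hexagon \eqref{eq:associativity} and the two unitality constraints \eqref{eq:unitality}. The cleanest route is to exploit that mates respect composition and that the associativity/unitality axioms for the strong monoidal $G$ are precisely what is needed: one translates the lax pentagon for $R$ into a statement about the transposed morphisms, applies the adjunction isomorphism $\Hom_\cC(A,RX)\cong\Hom_\cD(LA,X)$ (or rather its $G\dashv R$ counterpart), and reduces it to the corresponding diagram for $G$, which holds because $G$ is strong monoidal. The oplax case for $L$ is entirely dual, using $L\dashv G$. Rather than grinding through the diagram chase, I would invoke or reprove the general doctrinal-adjunction principle of \cite{KelDoc}: a monoidal functor's right (resp. left) adjoint automatically inherits a lax (resp. oplax) structure via mates, and the coherence axioms transport along the $2$-categorical adjunction in the $2$-category of monoidal categories and lax monoidal functors.

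The essential content is therefore the doctrinal adjunction theorem, so the main work is bookkeeping: confirming that the mate construction lands in the correct variance (strong monoidal $G$ gives \emph{lax} on $R$ and \emph{oplax} on $L$, not the reverse), and that naturality of $\lax_{X,Y}$ and $\oplax_{X,Y}$ in $X,Y$ follows from naturality of the unit, counit, and the monoidal structure isomorphisms of $G$.

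The step I expect to be the main obstacle is keeping the two adjunctions straight and ensuring the variances come out correctly. Because $G$ has adjoints on both sides, it is easy to conflate the counit $\counit^{G\dashv R}$ used in defining $\lax$ with the unit $\unit^{L\dashv G}$ used in defining $\oplax$, and the invertibility of $\mu,\mu_0$ must be used in exactly the right places so that the resulting structure maps point the correct way. Once the mate construction is set up with the correct adjunction on each side, the verification of the axioms is formal and follows \cite{KelDoc}; the details are recorded in \cite{FLP2}*{Lemma~2.2}, which I would cite for the full coherence check rather than reproduce it.
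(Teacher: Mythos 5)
Your argument is correct and coincides with what the paper does: the lemma is stated there without proof, as a direct citation of \cite{KelDoc} (doctrinal adjunction) and \cite{FLP2}*{Lemma~2.2}, and those sources carry out exactly the mate/transpose construction you describe, with the same variances (strong monoidal $G$ yielding lax on the right adjoint $R$ and oplax on the left adjoint $L$). The only quibble is notational --- under $G\dashv R$ the unit map $\lax_0$ is the transpose of $\mu_0\colon G(\one)\to\one$ itself rather than of its inverse --- but this does not affect the substance.
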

This lemma can be applied to the Hopf algebra case.  
\begin{example}
    For a morphism of Hopf algebras $\varphi\colon K\to H$, we note that $G=\Res_\varphi$ is a strong monoidal functor and will identify $\left. (W\otimes X)\right|_K$ and $\left.W\right|_K\otimes \left. X\right|_K$ for $H$-modules $W,X$ to simplify notation. Then $R=\CoInd_\varphi$ and $L=\Ind_\varphi$ have
the following lax and oplax monoidal structures, for $K$-modules $V$ and $U$:
\begin{gather}
    \lax^R_{V,U}\colon R(V)\otimes R(U)\longrightarrow R(V\otimes U), \quad f\otimes g\mapsto \big(h\mapsto f(h_{(1)})\otimes g(h_{(2)})\big),\label{eq:lax-Hopf}\\
    \oplax^{L}_{V,U}\colon  L(V\otimes U)\to L(V)\otimes L(U), \quad h\otimes_K (v\otimes u)\mapsto (h_{(1)}\otimes_K v)\otimes (h_{(2)}\otimes_K u).\label{eq:oplax-Hopf}
\end{gather}
\end{example}

Finally, a \emph{Frobenius monoidal functor} (see e.g.\,\cite{DP}) $F\colon \cD\to \cC$ is a functor equipped with a lax monoidal structure $(\lax,\lax_0)$, and an oplax monoidal structure $(\oplax, \oplax_0)$ satisfying the following two compatibility conditions, for any objects $X,Y,Z$ of $\cD$, 
\begin{align}
  (\lax_{X,Y}\otimes \id_{F(Z)})(\id_{F(X)}\otimes \oplax_{Y,Z}) & =  \oplax_{X\otimes Y,Z}\lax_{X,Y\otimes Z}\label{frobmon1}\\
  (\id_{F(X)}\otimes \lax_{Y,Z})(\oplax_{X,Y}\otimes \id_{F(Z)}) & = \oplax_{X,Y\otimes Z}\lax_{X\otimes Y,Z}. \label{frobmon2}
\end{align}

Recall that $\cC$ is \emph{braided monoidal} if it comes with natural isomorphisms 
$\Psi^\cC_{X,Y}\colon X\otimes Y\to Y\otimes X$, called the \emph{braiding} which satisfy the hexagon axioms (see e.g.\,\cite{EGNO}*{Section~8.1}). For  $\cC,\cD$ braided monoidal categories,  say that a lax (or, oplax) monoidal functor $F\colon \cD\to \cC$ is \emph{lax braided} (respectively, \emph{oplax braided}) if the conditions
\begin{align}
    \lax_{Y,X}\Psi^\cC_{F(X),F(Y)}&=F(\Psi^\cD_{X,Y})\lax_{X,Y}, \label{eq:Frob-braided-lax}\\
        \Psi^\cC_{F(X),F(Y)}\oplax_{X,Y}&=\oplax_{Y,X}F(\Psi^\cD_{X,Y}),\label{eq:Frob-braided-oplax}
\end{align}
hold for all objects $X,Y\in \cD$.
A Frobenius monoidal functor between braided monoidal categories is called \emph{braided} if it is both lax braided and oplax braided.

\subsection{Algebraic structures in monoidal categories}\label{sec:alg-in-C}

Let $\cC$ be a monoidal category. We can define an \emph{algebra in $\cC$} as an object $A$ in $\cC$ together with morphisms $m_A\colon A\otimes A\to A$, $u_A\colon \one \to A$ in $\cC$ which satisfy the associativity and unitality axioms internally to $\cC$. Algebras in $\cC$ form a category denoted by $\Alg(\cC)$. Dually, we define coalgebras in $\cC$ which form a category $\CoAlg(\cC)$. One checks that a lax\slash oplax monoidal functor $F\colon \cD\to \cC$ induces a functor $\Alg(\cD)\to \Alg(\cC)$\slash a functor $\CoAlg(\cD)\to \CoAlg(\cC)$. 
The (co)multiplication and (co)units on the images of these functors are defined by 
\begin{gather}
    m_{F(A)}=F(m_A)\lax_{A,A}, \qquad u_{F(A)}=F(u_A)\lax_{0},\label{eq:F-alg}\\
    \Delta_{F(C)}=\oplax_{C,C}F(\Delta_C), \qquad \varepsilon_{F(C)}=\oplax_{0} F(\varepsilon_C),\label{eq:F-coalg}
\end{gather}
where $(A,m_A,u_A)$ is an algebra and $(C,\Delta_A,\varepsilon_A)$ is a coalgebra in $\cD$.

Recall that a \emph{Frobenius algebra} in a monoidal category $\cC$ is an object $A$ in $\cC$ which is an algebra $(A,m_A,u_A)$ and a coalgebra $(A,\Delta_A,\varepsilon_A)$ such that the compatibility conditions
\begin{gather}\label{eq:Frobaxiom}
(m_A\otimes \id_A)(\id_A\otimes \Delta_A)=\Delta_A m_A=(\id_A\otimes m_A)(\Delta_A\otimes \id_A)
\end{gather}
hold.
A \emph{morphism of Frobenius algebras} is required to commute with both the algebra and coalgebra structures. We denote the category of Frobenius algebras in $\cC$ by $\Frob(\cC)$. One checks that a Frobenius monoidal functor $F\colon \cD\to \cC$ induces a functor
$\Frob(\cD)\to\Frob(\cC)$ that combines the induced functors on algebras and coalgebras defined in Equations~\eqref{eq:F-alg}--\eqref{eq:F-coalg}.

Recall that an algebra $A$ in a braided monoidal category $\cC$ with braiding $\Psi$ is \emph{commutative} if $m_A\Psi_{A,A}=m_A$ and a coalgebra $C$ is \emph{cocommutative} if  $\Psi_{C,C}\Delta_C=\Delta_C$. A commutative Frobenius algebra is a Frobenius algebra which is commutative as an algebra and note that this implies cocommutativity as a coalgebra. 

One checks that a braided lax (respectively, braided oplax) monoidal functor preserves commutative algebras (respectively, cocommutative coalgebras). Hence, a braided Frobenius monoidal functor preserves commutative Frobenius algebras.

\subsection{The Drinfeld center, Yetter--Drinfeld modules, and the Drinfeld double}

For a monoidal category $\cC$, the \emph{Drinfeld center} $\cZ(\cC)$ is a braided monoidal category, see \cite{Maj2}*{Example~3.4} and \cite{JS}*{Definition~3}. 
In this paper, we will describe the Drinfeld center $\cZ(\lMod{H})$ by the equivalent category $\lYD{H}$ of \emph{Yetter--Drinfeld modules} (or \emph{YD modules} for short) over a Hopf algebra $H$, see \cite{Yet}, \cite{Maj}*{Example 9.1.8}.
This category $\lYD{H}$ consists of objects $V$ which are both left $H$-modules and left $H$-comodules with the coaction
$$\delta\colon V\to H\otimes V, \qquad v\mapsto v^{(-1)}\otimes v^{(0)},$$
satisfying the \emph{Yetter--Drinfeld condition}
\begin{equation}\label{eq:YD-cond2}
\delta(h v)=h_{(1)}v^{(-1)}S(h_{(3)})\otimes h_{(2)}v^{(0)},
\end{equation}
for $h\in H, v\in V$. 
The tensor product $V\otimes W$ of two YD modules is simply the tensor product of the underlying $H$-modules and $H$-comodules via $\Delta_H$ and $m_H$, respectively. 
The braiding $\Psi_{V,W}\colon V\otimes W\to W\otimes V$ of two YD modules is given by 
\begin{equation}\label{eq:YDbraiding}
    \Psi_{V,W} \colon V\otimes W\to W\otimes V, \qquad v\otimes w \mapsto (v^{(-1)}\cdot w)\otimes v^{(0)}, 
\end{equation}
for any $v\in V,w\in W$.
It shall be noted that, when $H$ is finite-dimensional, the category of Yetter--Drinfeld modules is equivalent to modules over a Hopf algebra, called the \emph{Drinfeld double} of $H$, see e.g.\,\cite{EGNO}*{Section~7.14}. In this paper, we will work with the YD module description of the center which is more convenient, especially when working with infinite-dimensional Hopf algebras.

\section{Frobenius extensions of Hopf algebras}
\label{sec:Hopf}

We now discuss results on Frobenius extensions of Hopf algebras following \cites{Sch,FMS}. We start by describing isomorphisms of induction and coinduction functors in \Cref{sec:Ind-Coind-iso}. The associated augmented algebra is introduced as a key tool to study Frobenius extensions in \Cref{sec:ovH*}. The concepts of extensions of right integral type and normal bases, and criteria when such extensions are Frobenius extensions, are recalled in Sections \ref{sec:FrobExt}--\ref{sec:FrobExt3}. 

\subsection{Isomorphisms of induction and coinduction functors}
\label{sec:Ind-Coind-iso}

Let $K\subset H$ be an inclusion of Hopf algebras (from now on called an \emph{extension} of Hopf algebras).

\begin{remark}
We say that $K\subset H$ is a \emph{finite extension} if $H$ is finitely generated as a left $K$-module.
We note that, as we only consider Hopf algebras with invertible antipode, $H$ is finitely generated\slash projective\slash faithfully flat\slash free as a left $K$-module if and only if $H$ has the respective property as a right $K$-module. Thus, we will say that \emph{$H$ is  finite\slash projective\slash faithfully flat\slash free over $K$} in these cases without ambiguity.
\end{remark}

We first recall when
induction and coinduction along $K\subset H$ are isomorphic based on maps called \emph{Frobenius morphisms}, see \cites{Kas,NT,MN}. 

\begin{proposition}\label{thm:ind-coind}
The following statements are equivalent.
\begin{enumerate}
    \item[(i)] The functors $\Ind_K^H$ and  $\CoInd_K^H$ are isomorphic.
    \item[(ii)]
       \begin{enumerate}
        \item[(a)] $H$ is finite projective over $K$, and 
        \item[(b)] there is a morphism of $K$-$K$-bimodules
        $\tr \colon H \to K$
which induces a bijection $\theta_H \colon H\to \Hom_K(H,K), h\mapsto \tr((-)h)$.
    \end{enumerate}
\end{enumerate}
\end{proposition}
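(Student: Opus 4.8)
We refer to the three listed conditions, in the order stated, as (i), (ii), and (iii) (so (iii) is the condition phrased in terms of $\tr$). The plan is to prove $(i)\Leftrightarrow(ii)$ as the main content and then dispatch $(ii)\Leftrightarrow(iii)$ as a repackaging of the same data. The technical backbone is the classical evaluation isomorphism: whenever $H$ is finite projective as a left $K$-module, there is an isomorphism
$$\Hom_K(H,V)\cong \Hom_K(H,K)\otimes_K V,$$
natural in the left $K$-module $V$ and compatible with the left $H$-module structures induced by right multiplication of $H$ on itself. I would first record this, noting that $\Hom_K(H,K)$ carries an $H$-$K$-bimodule structure (left $H$-action from right multiplication on $H$, right $K$-action from multiplication in the target $K$), so that the right-hand side computes $\CoInd_K^H(V)$ as the functor $\Hom_K(H,K)\otimes_K(-)$.

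For $(ii)\Rightarrow(i)$, assuming (ii)(a) the evaluation isomorphism identifies $\CoInd_K^H$ with $\Hom_K(H,K)\otimes_K(-)$; tensoring the $H$-$K$-bimodule isomorphism $\theta_K$ of (ii)(b) with $V$ then gives $\Ind_K^H(V)=H\otimes_K V\cong \Hom_K(H,K)\otimes_K V\cong \CoInd_K^H(V)$, naturally in $V$, which is the desired isomorphism $\Ind_K^H\cong\CoInd_K^H$.

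For $(i)\Rightarrow(ii)$, which I expect to be the main obstacle since I must extract both finiteness/projectivity and a genuine \emph{bimodule} isomorphism out of a mere natural isomorphism of functors, I would argue as follows. An isomorphism $\Ind_K^H\cong\CoInd_K^H$ exhibits the right adjoint $\CoInd_K^H=\Hom_K(H,-)$ also as a left adjoint, hence it preserves all colimits; a corepresentable functor $\Hom_K(H,-)$ preserves colimits precisely when $H$ is finitely generated projective as a left $K$-module, giving (ii)(a). Using the evaluation isomorphism once more, the hypothesis becomes a natural isomorphism $H\otimes_K(-)\cong \Hom_K(H,K)\otimes_K(-)$ of right-exact, colimit-preserving functors between module categories; by an Eilenberg--Watts argument, natural transformations of such functors correspond to maps of the representing $H$-$K$-bimodules, so the isomorphism is induced by a bimodule isomorphism $H\cong \Hom_K(H,K)$, which is (ii)(b). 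The delicate point is to track compatibility with the residual right $K$-action, so that one obtains a bimodule rather than merely a left $H$-module isomorphism; this is exactly what the bimodule-valued form of Eilenberg--Watts supplies.

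Finally, $(ii)\Leftrightarrow(iii)$ is a direct unpacking. Given $\theta_K$ as in (ii), I would set $\tr:=\theta_K(1_H)\colon H\to K$ and use left $H$-linearity of $\theta_K$ to compute $\theta_K(h)(x)=\theta_K(1_H)(xh)=\tr(xh)$, so that $\theta_K$ coincides with $\theta_H\colon h\mapsto\tr((-)h)$; left $K$-linearity of $\tr$ follows since each value $\theta_K(h)$ lies in $\Hom_K(H,K)$, and right $K$-linearity of $\tr$ follows from right $K$-linearity of $\theta_K$, while bijectivity of $\theta_H=\theta_K$ is immediate. Conversely, given $\tr$ as in (iii), I would verify directly that $\theta_H(h)=\tr((-)h)$ is left $K$-linear (hence lands in $\Hom_K(H,K)$), left $H$-linear, and right $K$-linear, so that it is an $H$-$K$-bimodule map, and therefore an isomorphism by the bijectivity hypothesis. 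This recovers (ii)(b) and completes the cycle.
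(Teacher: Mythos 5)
The paper does not actually prove this proposition: it is recalled from the classical literature on Frobenius extensions (the references [Kasch], [NT], [MN] cited just above it), so there is no in-paper argument to compare yours against. That said, your proof is correct and is essentially the standard one. The evaluation isomorphism $\Hom_K(H,K)\otimes_K V\cong\Hom_K(H,V)$ for $H$ finite projective over $K$, with the right $K$-action on $\Hom_K(H,K)$ coming from the target, is the right backbone; your handling of $(ii)\Rightarrow(i)$ and of $(ii)\Leftrightarrow(iii)$ is exactly as it should be, and the resulting formula $\theta_K(h)=\tr((-)h)$ with $\tr=\theta_K(1_H)$ reproduces the paper's map \eqref{lem:thetaV}. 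Two small remarks on $(i)\Rightarrow(ii)$. First, when deducing (ii)(a), preservation of direct sums by $\Hom_K(H,-)$ only gives that $H$ is small, which for a general module does not imply finite generation; you must combine it with projectivity (obtained from preservation of cokernels) to conclude that $H$ is finitely generated projective. Your combined statement is true, but the logical order matters and is worth making explicit. Second, the appeal to the bimodule form of Eilenberg--Watts can be replaced by a more elementary argument that also disposes of the ``delicate point'' you flag: evaluate the natural isomorphism $H\otimes_K(-)\cong\Hom_K(H,K)\otimes_K(-)$ at $V=K$ to get a left $H$-linear bijection $\theta_K\colon H\to\Hom_K(H,K)$, and then apply naturality with respect to the right-multiplication endomorphisms $r_k\colon K\to K$ (which are morphisms of left $K$-modules) to obtain $\theta_K(hk)=\theta_K(h)\cdot k$. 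Either route closes the argument; there is no gap.
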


We adopt the following terminology from \cites{Sch,FMS}. 
\begin{definition}[Frobenius extensions and Frobenius morphisms]\label{def:Frob-mor}
     A map $\tr$ satisfying the conditions in \Cref{thm:ind-coind}(ii)(b) is called a \emph{Frobenius morphism}. If such a Frobenius morphism exists for $H$ finite projective over $K$, then we call $K\subset H$ a \emph{Frobenius extension} of Hopf algebras.
\end{definition}

If $H$ is finite projective over $K$, Frobenius morphisms are in bijection with natural isomorphisms $\theta\colon \Ind_K^H\isomorph \CoInd_K^H$.
Indeed, a Frobenius morphism $\tr\colon H\to K$ 
gives a natural isomorphism $\theta\colon \Ind_K^H\to \CoInd_K^H$ with component maps
\begin{align}\label{lem:thetaV}
\theta_V(h\otimes v)=\big(g\mapsto \tr(gh)v \big).
\end{align}

\begin{example}\label{ex:groups}
Let $\mathsf{G}$ be a group and $\sfK\subset \sfG$ a subgroup of finite index. One shows as in \cite{FLP3}*{Example~5.22} that $\tr(g)=g$, if $g\in \sfK$, and $\tr(g)=0$ otherwise
extends to a Frobenius morphism $\tr\colon \Bbbk \sfG\to \Bbbk \sfK$.
\end{example}

Not all extensions of Hopf algebras admit a Frobenius morphism, for instance, the inclusion of the group algebra $\Bbbk \sfC_\ell$ of a cyclic group of order $\ell>2$ into the Taft algebra $T_\ell(q)$, associated to an $\ell$-th root of unity, does not, as shown in \cite{FLP3}*{Example~5.7}.

\smallskip

For Frobenius extensions of Hopf algebras, it is useful to fix the following dual sets of generators for $H$, see \cite{FLP3}*{Lemma~5.8} and cf.\,\cite{FMS}*{1.3.\,Proposition}.

\begin{corollary}\label{lem:kideltai}
If $K\subset H$ is a Frobenius extension with Frobenius morphism $\tr$, then there exist $n\geq 1$ and generators $h_1,\ldots,h_n$ of $H$ as a left $K$-module and $\delta_1,\ldots, \delta_n$ of $H$ as a right $K$-module such that, for all $h\in H$,
\begin{align}
\sum_{i=1}^n \delta_i\otimes_K h_i h &= \sum_{i=1}^n h\delta_i\otimes_K h_i,\label{eq:kideltai1} \\
h =\sum_{i=1}^n \tr(h\delta_i)h_i&= \sum_{i=1}^n \delta_i\tr(h_i h).
\label{eq:kideltai2}
\end{align}
Moreover, the inverse of $\theta_V$ from \Cref{lem:thetaV} is given by 
\begin{equation}
\theta_V^{-1}\colon \CoInd_K^H(V)\to \Ind_K^H(V), \quad f\mapsto \sum_{i=1}^n\delta_i\otimes_K f(h_i).\label{eq:thetainv}
\end{equation}
\end{corollary}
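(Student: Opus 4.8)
The plan is to establish the existence of dual bases $\{h_i\}$ and $\{\delta_i\}$ from the finite projectivity of $H$ over $K$, and then verify the two displayed identities \eqref{eq:kideltai1}--\eqref{eq:kideltai2} together with the formula \eqref{eq:thetainv} by translating the bimodule isomorphism $\theta_K\colon H\to \Hom_K(H,K)$ into explicit generator data. First I would use the fact that, by \Cref{thm:ind-coind}(ii)(a), $H$ is finite projective as a right $K$-module, so there is a finite dual basis: elements $h_1,\dots,h_n\in H$ and right-$K$-linear functionals $f_1,\dots,f_n\in\Hom_K(H,K)$ with $h=\sum_i h_i f_i(h)$ for all $h\in H$. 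Since $\theta_H\colon H\to\Hom_K(H,K)$, $h\mapsto\tr((-)h)$, is a bijection by \Cref{thm:ind-coind}(ii)(b), each $f_i$ is of the form $\tr((-)\delta_i)$ for a unique $\delta_i\in H$, i.e.\ $f_i=\tr((-)\delta_i)$. This simultaneously produces the right-$K$-module generators $\delta_1,\dots,\delta_n$ and yields the right-hand equality of \eqref{eq:kideltai2}, namely $h=\sum_i h_i\tr(\delta_i h)$ after reindexing; the symmetric left-hand equality follows by applying the same dual-basis argument on the other side (using that $H$ is also finite projective as a \emph{left} $K$-module, via the antipode-symmetry noted in the Remark preceding \Cref{thm:ind-coind}).

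Next I would prove \eqref{eq:kideltai1}. The cleanest route is to interpret the element $\sum_i\delta_i\otimes_K h_i\in H\otimes_K H$ as the image, under the canonical isomorphism $\Hom_K(H,K)\otimes_K H\cong \End_K(H)$ induced by the dual basis, of the identity endomorphism $\id_H$; this is the standard ``Casimir''-type element attached to a Frobenius morphism. Because $\id_H$ commutes with left multiplication $\lambda_h$ by any $h\in H$ in $\End_K(H)$, and left multiplication corresponds on one tensor factor to $h_ih\mapsto$ and on the other to $h\delta_i\mapsto$ under the $H$-$K$-bimodule structures, the equation $\lambda_h\circ\id=\id\circ\lambda_h$ translates precisely into $\sum_i\delta_i\otimes_K h_ih=\sum_i h\delta_i\otimes_K h_i$. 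Concretely, one checks this by pairing both sides against an arbitrary functional and using \eqref{eq:kideltai2}; the bimodule compatibility of $\theta_K$ (that it is a map of $H$-$K$-bimodules) is exactly what guarantees the left $H$-action can be moved across the tensor symbol. I would present this pairing computation rather than the abstract endomorphism-algebra argument if it keeps the proof self-contained.

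Finally, for the inverse formula \eqref{eq:thetainv}, I would verify directly that the stated map is a two-sided inverse to $\theta_V$ from \eqref{lem:thetaV}. Composing $\theta_V$ followed by the candidate inverse sends $h\otimes v$ to $\sum_i\delta_i\otimes_K\tr(h_ih)v=\big(\sum_i\delta_i\tr(h_ih)\big)\otimes_K v=h\otimes_K v$ by the right-hand identity in \eqref{eq:kideltai2}. For the other composite, starting from $f\in\CoInd_K^H(V)$ one obtains $\theta_V\big(\sum_i\delta_i\otimes_K f(h_i)\big)=\big(g\mapsto\sum_i\tr(g\delta_i)f(h_i)\big)$, and using the left-hand identity $g=\sum_i\tr(g\delta_i)h_i$ together with $K$-linearity of $f$ gives $g\mapsto f\big(\sum_i\tr(g\delta_i)h_i\big)=f(g)$, so the composite is the identity. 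The main obstacle I anticipate is bookkeeping the two-sided structure correctly: one must keep track of which generating set ($\{h_i\}$ as left generators versus $\{\delta_i\}$ as right generators) and which version of \eqref{eq:kideltai2} is being invoked at each step, and ensure the $H$-$K$-bimodule naturality is used consistently so that all maps are genuinely well defined on the balanced tensor products $\otimes_K$. Establishing \eqref{eq:kideltai1} is the technical heart, as it is the only identity not formally following from a single dual-basis expansion.
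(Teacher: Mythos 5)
The paper itself gives no proof of this corollary --- it is imported from \cite{FLP3}*{Lemma~5.8} --- so I am judging your argument on its own terms. Your overall architecture (dual basis plus bijectivity of $\theta_H$ for \eqref{eq:kideltai2}, a Casimir-element argument in $\Hom_K(H,H)$ for \eqref{eq:kideltai1}, direct two-sided verification for \eqref{eq:thetainv}) is the standard and correct one, and your verification of \eqref{eq:thetainv} is complete as written.

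There are two problems in your first step. First, a sides mismatch: the functionals $\tr((-)\delta)$ in the image of $\theta_H$ are \emph{left} $K$-linear (since $\tr$ is a $K$-$K$-bimodule map), so they cannot realize the \emph{right}-$K$-linear dual basis functionals you start from. You should instead take a dual basis for $H$ as a finite projective \emph{left} $K$-module, $h=\sum_i f_i(h)h_i$ with $f_i$ left $K$-linear, and use surjectivity of $\theta_H$ to write $f_i=\tr((-)\delta_i)$; this yields the equality $h=\sum_i\tr(h\delta_i)h_i$. Second, and more seriously, your claim that the other equality of \eqref{eq:kideltai2} ``follows by applying the same dual-basis argument on the other side'' is a genuine gap: an independent application of the dual basis lemma for $H$ as a right $K$-module produces a priori \emph{different} families of elements, whereas the corollary asserts both identities for the \emph{same} pair $(h_i,\delta_i)$. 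The missing step is to derive the second identity from the first using \emph{injectivity} of $\theta_H$: for all $g,h\in H$, expanding $g=\sum_i\tr(g\delta_i)h_i$ and using $K$-bilinearity of $\tr$ gives $\tr(gh)=\sum_i\tr(g\delta_i)\tr(h_ih)=\tr\bigl(g\sum_i\delta_i\tr(h_ih)\bigr)$, whence $h=\sum_i\delta_i\tr(h_ih)$. The same injectivity --- equivalently, injectivity of the map $H\otimes_KH\to\Hom_K(H,H)$, $\delta\otimes_K h\mapsto(q\mapsto\tr(q\delta)h)$, which is what your ``pairing against arbitrary functionals'' tacitly requires --- is also the correct justification for \eqref{eq:kideltai1}; your endomorphism-algebra sketch then goes through, except that the operators being transported are the right multiplications $x\mapsto xh$ (these, not the left multiplications, are the left $K$-linear ones).
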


For a Frobenius extension $K\subset H$, we can give $F:=\Ind_K^H$ the structure of a left and right adjoint to $G:=\Res_K^H$. The units and counits of these adjunctions are
\begin{align}
    \unit^{G\dashv F}_W&\colon W\to FG(W), &w&\mapsto \sum_{i=1}^r \delta_i\otimes_K h_iw,\label{eq:unit1}\\ 
    \counit^{G\dashv F}_V&\colon GF(V)\to V, & h\otimes_K v& \mapsto \tr(h)v,\label{eq:counit1}\\
    \unit^{F\dashv G}_V&\colon V\to GF(V), &v&\mapsto 1\otimes_K v,\label{eq:unit2}\\
    \counit^{F\dashv G}_W&\colon FG(W)\to W, &h\otimes_K w&\mapsto hw,\label{eq:counit2}
\end{align}
for any  $K$-module $V$ and any $H$-module $W$.
Moreover, the functor $F$ is lax and oplax monoidal with the lax and oplax monoidal structures
\begin{align}
\begin{split}
    \lax_{V,U}\colon  F(V)\otimes F(U)&\to F(V\otimes U),\\ \quad (h\otimes_K v)\otimes (g\otimes_K u)&\mapsto \sum_i \delta_i\otimes_K (\tr((h_i)_{(1)}h)v\otimes \tr((h_i)_{(2)}g)u),
\end{split}\label{eq:Hopf-lax}\\
\begin{split}
    \oplax_{V,U}\colon  F(V\otimes U)&\to F(V)\otimes F(U),\\ \quad h\otimes_K (v\otimes u)&\mapsto (h_{(1)}\otimes_K v)\otimes (h_{(2)}\otimes_K u),
\end{split}\label{eq:Hopf-oplax}\\
   \lax_0\colon \one &\to F(\one), \quad 1\mapsto \sum_{i=1}^r \delta_i\otimes_K \varepsilon_H(h_i), \label{eq:unit-Hopf}\\
    \oplax_0\colon F(\one)&\to \one, \quad h\otimes_K1 \mapsto \varepsilon_H(h). \label{eq:counit-Hopf}
\end{align}

The following natural isomorphisms, called \emph{projection formula morphisms}, will be important later. The general categorical aspects of the theory behind these morphisms are described in general in \cite{FLP2}, see also \cites{FHM,BLV,FLP3}. 

\begin{definition}\label{def:proj-Hopf}
Given a Frobenius extension $K\subset H$, we define the \emph{left projection formula morphisms} to be the following natural transformations 
\begin{align}
\begin{split}
        \projl{W}{V}^{F, G\dashv F}\colon W\otimes F(V)&\to F(G(W)\otimes V), \\ w\otimes (h\otimes_K v)&\mapsto \sum_{i=1}^r \delta_i\otimes_K ((h_i)_{(1)}w\otimes \tr((h_i)_{(2)}h)v),
\end{split}\\
\begin{split}
        \projl{W}{V}^{F, F\dashv G}\colon F(G(W)\otimes V)&\to W\otimes F(V), \\h\otimes_K (w\otimes v)&\mapsto h_{(1)}w\otimes (h_{(2)}\otimes_K v),
\end{split}
\end{align}
and the \emph{right projection formula morphisms} to be
\begin{align}
\begin{split}
\projr{V}{W}^{F, G\dashv F}\colon F(V)\otimes W&\to F(V\otimes G(W)), \\ (h\otimes_K v)\otimes w&\mapsto \sum_{i=1}^r \delta_i\otimes_K (\tr((h_i)_{(1)}h)v\otimes (h_i)_{(2)}w),
\end{split}
\end{align}
\begin{align}
\begin{split}
    \projr{V}{W}^{F, F\dashv G}\colon F(V\otimes G(W))&\to F(V)\otimes W, \\h\otimes_K (v \otimes w) &\mapsto (h_{(1)}\otimes_K v)\otimes h_{(2)}w.
\end{split}    \label{eq:rproj}
\end{align}
\end{definition}
We note that the pair $\projlnoarg^{F, G\dashv F}$ and $\projrnoarg^{F, G\dashv F}$ comes from the adjunction $G\dashv F$, where $F$ is right adjoint to $G$ while the other pair comes from the adjunction $F\dashv G$, where $F$ is the left adjoint. This is indicated in the superscripts. 

\begin{lemma}\label{lem:proj-Hopf}
The projection formula morphisms are isomorphisms for any Frobenius extension of Hopf algebras. 
\end{lemma}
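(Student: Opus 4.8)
The plan is to show that each of the four projection formula morphisms in \Cref{def:proj-Hopf} is an isomorphism by exhibiting its inverse explicitly and verifying that the composites are identities. The two morphisms arising from the adjunction $F\dashv G$ are the more approachable ones, so I would treat them first. Consider $\projr{V}{W}^{F,F\dashv G}$ sending $h\otimes_K(v\otimes w)\mapsto (h_{(1)}\otimes_K v)\otimes h_{(2)}w$. A natural candidate for its inverse sends $(h\otimes_K v)\otimes w\mapsto h_{(1)}\otimes_K (v\otimes S_H^{-1}(h_{(2)})w)$, using the invertibility of the antipode $S_H$, which is assumed throughout. One then checks that the two composites collapse to the identity by the antipode axioms $h_{(1)}S_H^{-1}(h_{(2)})=\varepsilon_H(h)1$ and $S_H^{-1}(h_{(1)})h_{(2)}=\varepsilon_H(h)1$ together with counitality of $\Delta_H$; the key point is that the tensor $\otimes_K$ lets one move the group-like-style factors across the balanced tensor. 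The left-hand version $\projl{W}{V}^{F,F\dashv G}$ is handled symmetrically.

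For the two morphisms coming from the adjunction $G\dashv F$, where $F$ is the \emph{right} adjoint to $G$, the natural strategy is not to invert them from scratch but to relate them to the $F\dashv G$ versions via the isomorphism $\theta$ of \Cref{lem:thetaV}, together with the formulas \eqref{eq:kideltai1}--\eqref{eq:kideltai2} of \Cref{lem:kideltai}. Since $F=\Ind_K^H\cong \CoInd_K^H$ as a Frobenius extension, the general categorical theory developed in \cite{FLP2} (and recalled in \cites{FHM,BLV,FLP3}) guarantees that the projection formula morphisms attached to either adjunction of a Frobenius functor are automatically isomorphisms whenever the underlying monoidal functor $G$ is strong and the category is suitably closed. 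Concretely, I would verify directly that $\projr{V}{W}^{F,G\dashv F}$ and $\projr{V}{W}^{F,F\dashv G}$ are mutually inverse, using the defining relations \eqref{eq:kideltai1} and \eqref{eq:kideltai2} of the dual generators $h_i,\delta_i$ to cancel the sums $\sum_i\delta_i\otimes_K(\dots)$ against the comultiplication factors. The trace-relations \eqref{eq:kideltai2}, namely $h=\sum_i \tr(h\delta_i)h_i=\sum_i\delta_i\tr(h_ih)$, are precisely what convert the $G\dashv F$ counit $\tr$ appearing in $\projrnoarg^{F,G\dashv F}$ back into the plain multiplication appearing in $\projrnoarg^{F,F\dashv G}$.

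The main obstacle, and where I would spend most of the care, is the bookkeeping across the balanced tensor product $\otimes_K$: one must check that the proposed inverses are well defined as maps out of $H\otimes_K(-)$, i.e.\ that they respect the $K$-linearity relations $hk\otimes_K v = h\otimes_K kv$, which is where the bimodule property of $\tr$ and the relation \eqref{eq:kideltai1} are essential. Once well-definedness is secured, the verification that the composites are identities reduces to the Hopf-algebra axioms (coassociativity, counitality, the antipode relations) and the two defining identities of \Cref{lem:kideltai}, so I would present one composite in full and indicate that the remaining three follow by the same manipulations together with the left-right symmetry $h_i\leftrightarrow\delta_i$. I expect no genuine conceptual difficulty beyond this combinatorial verification, precisely because \Cref{lem:kideltai} has already packaged the needed dual-generator identities.
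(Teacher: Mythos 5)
Your treatment of the two $F\dashv G$ morphisms follows the paper's strategy --- exhibit an explicit inverse built from the antipode --- but your formula and the identities you invoke are wrong. The inverse antipode does \emph{not} satisfy $h_{(1)}S^{-1}(h_{(2)})=\varepsilon(h)1$ or $S^{-1}(h_{(1)})h_{(2)}=\varepsilon(h)1$; being the antipode of $H^{\cop}$, it satisfies $h_{(2)}S^{-1}(h_{(1)})=\varepsilon(h)1=S^{-1}(h_{(2)})h_{(1)}$, and your claimed identities hold only when $S^2=\id$. Concretely, with your candidate inverse the composite on $F(V)\otimes W$ produces the factor $h_{(2)}S^{-1}(h_{(3)})w$, which does not collapse to $\varepsilon(h_{(2)})w$ (test a skew-primitive $h$ in the algebra $B$ of \Cref{ex:Taft-infinite}). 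The correct inverses are $(h\otimes_K v)\otimes w\mapsto h_{(1)}\otimes_K(v\otimes S(h_{(2)})w)$ for $\projrnoarg^{F,F\dashv G}$ and $w\otimes(h\otimes_K v)\mapsto h_{(2)}\otimes_K(S^{-1}(h_{(1)})w\otimes v)$ for $\projlnoarg^{F,F\dashv G}$, as in \eqref{eq:iprojl-inv}--\eqref{eq:iprojr-inv}; the left--right ``symmetry'' exchanges $S$ and $S^{-1}$, so it cannot be dismissed as routine.

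The more serious gap is your plan for the $G\dashv F$ morphisms: you propose to prove they are isomorphisms by checking that $\projrnoarg^{F,G\dashv F}$ and $\projrnoarg^{F,F\dashv G}$ (and the left-hand pair) are mutually inverse. That statement is false for a general Frobenius extension: by \Cref{prop:Hopf-right} the right-hand pair is mutually inverse if and only if $\tr$ is a morphism of right $H$-comodules, and by \Cref{thm:ZInd-Frob-mod} the left-hand pair is mutually inverse (together with the right-hand one) if and only if $\tr$ is moreover a morphism of $H$-$H$-bicomodules. The latter fails even for the canonical Frobenius morphism \eqref{eq:good-Frob-morphism}, e.g.\ in \Cref{ex:Taft-infinite} and \Cref{rmk:q-groups}, where all four projection formula morphisms are isomorphisms and yet the left-hand pair is not mutually inverse; if your computation went through, every Frobenius extension would be central in the sense of \Cref{def:central-Frob-ext}. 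The part of your second step that survives is the appeal to the general theory: the paper's proof simply cites \cite{FLP2}*{Section~7} for invertibility. A direct argument must instead produce the inverse of $\projrnoarg^{F,G\dashv F}$ as a genuinely different map, for instance by transporting the (corrected) inverse of $\projrnoarg^{F,F\dashv G}$ along the isomorphisms $\theta_V$ and $\theta_V^{-1}$ of \Cref{lem:thetaV} and \Cref{lem:kideltai}.
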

\begin{proof}
This follows from the results of \cite{FLP2}*{Section~7}.
The invertibility of the projection formula morphisms can also be checked directly using  the inverse antipode. For instance, one checks that 
\begin{align}
\begin{split}
        (\iprojl{W}{V})^{-1}\colon W\otimes F(V)&\to F(G(W)\otimes V), \\w\otimes (h\otimes_K v)&\mapsto h_{(2)}\otimes_K( S^{-1}(h_{(1)})w\otimes  v), 
\end{split}\label{eq:iprojl-inv}\\
\begin{split}
        (\iprojr{V}{W})^{-1}\colon F(V)\otimes W&\to F(V\otimes G(W)), \\(h\otimes_K v)\otimes w &\mapsto h_{(1)}\otimes_K( v\otimes S(h_{(2)})w).
\end{split}\label{eq:iprojr-inv}
\end{align}
Formulas for the inverses for the other two morphisms can be obtained using \Cref{lem:kideltai} but we will not require them. 
\end{proof}

\subsection{The augmented algebra associated with an extension of Hopf algebras}
\label{sec:ovH*}

Let $K\subset H$ be an extension of Hopf algebras. In this section, we introduce the associated augmented algebra $\ov{H}^*$ following \cites{Sch,FMS}.
To do this, we first recall the definition of integrals of augmented algebras.

\begin{definition}[Integrals, unimodularity]
Let $A$ be a $\Bbbk$-algebra with an algebra map $\varepsilon\colon A\to \Bbbk$, called the \emph{augmentation}.
\begin{enumerate}[(i)]
    \item A \emph{right integral} is an element $\Lambda\in A$ such that $\Lambda x=\Lambda \varepsilon(x)$ for all $x\in A$, i.e.\,an element spanning a submodule of $A$ as a right  module over itself, on which $A$ acts by its counit. 
    \item Similarly, a \emph{left integral} is an element $\Lambda\in A$ such that $x\Lambda=\varepsilon(x)\Lambda$ for all $x\in A$. 
    \item An augmented algebra is called \emph{unimodular} if the spaces of left and right integrals coincide.
\end{enumerate}
 \end{definition}

It follows from the definition that the space of right integrals forms a left $A$-submodule of $A$. In particular, if the space of right integrals is one-dimensional, this gives rise to an algebra map $\alpha_A\colon A\to \Bbbk$ determined by the left multiplication
\begin{equation}\label{eq:modularfun}
    a \Lambda = \alpha_A(a)\Lambda, \qquad \forall a\in A,
    \end{equation}
for any non-zero right integral $\Lambda$ of $A$. Note that $\alpha_A$ is independent of the choice of $\Lambda$. The following lemma shows that it detects unimodularity.

\begin{lemma}
    Assume that the spaces of left and right integrals of $A$ are one-dimensional. Then the augmented algebra $A$ is unimodular if and only if $\alpha_A=\varepsilon$.
\end{lemma}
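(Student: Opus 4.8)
The plan is to argue directly with a fixed nonzero right integral $\Lambda$, which by hypothesis spans the one-dimensional space of right integrals, and to play off the defining relation $a\Lambda=\alpha_A(a)\Lambda$ from \eqref{eq:modularfun} against the condition defining a left integral. The whole statement reduces to observing that, for this particular $\Lambda$, ``$\Lambda$ is also a left integral'' is literally the same assertion as ``$\alpha_A=\varepsilon$''; the one-dimensionality hypotheses then let me pass between such a pointwise statement and the equality of the two integral spaces.

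For the implication $\alpha_A=\varepsilon\Rightarrow$ unimodular, I would start from $a\Lambda=\alpha_A(a)\Lambda=\varepsilon(a)\Lambda$ for all $a\in A$, which exhibits $\Lambda$ as a left integral. Since $\Lambda\neq 0$ and the space of left integrals is one-dimensional, that space must equal $\Bbbk\Lambda$, which is by construction the space of right integrals; hence the two spaces coincide and $A$ is unimodular. For the converse, assuming the spaces coincide, the right integral $\Lambda$ is in particular a left integral, so $a\Lambda=\varepsilon(a)\Lambda$ for all $a$; comparing with $a\Lambda=\alpha_A(a)\Lambda$ and cancelling the nonzero $\Lambda$ yields $\alpha_A(a)=\varepsilon(a)$ for every $a$, that is $\alpha_A=\varepsilon$.

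I do not expect a genuine obstacle here; this is a short and formal argument. The only points demanding a little care are that $\Lambda$ be nonzero, so that it may be cancelled and so that $\Bbbk\Lambda$ is actually one-dimensional, and that the one-dimensionality of the left integral space is invoked in the forward direction to upgrade the pointwise fact ``$\Lambda$ is a left integral'' to the equality of the two integral spaces. Notably, the argument uses only the defining equation \eqref{eq:modularfun} for $\alpha_A$ and not its multiplicativity as an algebra map.
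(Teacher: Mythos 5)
Your proof is correct and follows essentially the same route as the paper: in both directions one fixes a nonzero right integral $\Lambda$, identifies "$\Lambda$ is a left integral" with "$\alpha_A=\varepsilon$" via \eqref{eq:modularfun}, and uses one-dimensionality to upgrade this to equality of the two integral spaces. Your remark that only the defining relation for $\alpha_A$ (not its multiplicativity) is needed is a fair observation, but otherwise there is nothing to add.
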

\begin{proof}
    If $A$ is unimodular, then any right integral is also a left integral and hence $\alpha_A=\varepsilon$. 
Conversely, if $\alpha_A=\varepsilon$ and $\Lambda$ is a right integral, then $a\Lambda=\varepsilon(a)\Lambda$ and hence $\Lambda$ is also a left integral. By one-dimensionality of the spaces of integrals, it follows that any left integral equals $s\Lambda$ for a non-zero right integral $\Lambda$ and $s\in \Bbbk$, and hence is a right integral. 
\end{proof}

\begin{remark}\label{rem:Hopf-case}
The above concepts generalize the corresponding tools for Hopf algebras. If $H$ is a finite-dimensional Hopf algebra, then $H$ is an augmented algebra with a one-dimensional space of right integrals \cite{LS2}. Thus, $\alpha_H\in H^*$ is a morphism of algebras, i.e.~$\Delta_{H^*}(\alpha_H)=\alpha_H\otimes \alpha_H$, see e.g.~\cite{Mon}*{Section 2.2}. $\alpha_H$ is called the \emph{distinguished grouplike element} of $H^*$. It follows that $H$ is unimodular if and only if $\alpha_H=\varepsilon_H$.
\end{remark}

\begin{definition}[$\ov{H}$, $\ov{H}^*$] \label{def:overlines}
Let $K\subset H$ be an extension of Hopf algebras and set  $K^+:=\ker\varepsilon_K$.
\begin{enumerate}[(i)]

\item Define $\ov{H}:=H/HK^+$, where $HK^+$ is the left $H$-module generated by $K^+$. Then $\ov{H}$ is a left $H$-module coalgebra with a $K$-linear co-augmentation 
$\nu \colon \Bbbk\to \ov{H}$, $1\mapsto \ov{1}=1_H+HK^+$.

\item The $H$-module quotient map $H\to \ov{H}$, $h\mapsto \ov{h}=h+HK^+$ is a morphism of co-augmented coalgebras, i.e.\,a morphism of coalgebras such that $1_H\mapsto \ov{1}$. 

\item  The $\Bbbk$-dual $\ov{H}^*=\Hom_\Bbbk(H/HK^+,\Bbbk)$ is an augmented algebra with the convolution product
$$
(\mu*\nu)(\ov h) = \mu(\ov{h_{(1)}})\nu(\ov{h_{(2)}}) ,
$$
and unit given by $\varepsilon_{\ov{H}}(\ov{h})=\varepsilon_H(h).$
The augmentation $\varepsilon \colon  \ov{H}^*\to \Bbbk$ is given by evaluation of a linear function $\mu\colon \ov{H}\to \Bbbk$ at $\ov{1}$, i.e.~by $\varepsilon(\mu)=\mu(\ov{1})$.
\end{enumerate}
\end{definition}

\begin{lemma}\label{lem:basisovH}
    Let $K\subset H$ be an extension of Hopf algebras. 
    \begin{enumerate}[(i)]
        \item   For all $h\in H, k\in K$, $\ov{hk}=\varepsilon(k)\ov{h}$. 
        \item   If $H$ is finite over $K$, then $\ov{H}$ is finite-dimensional.
    \end{enumerate}
\end{lemma}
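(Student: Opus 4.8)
The plan is to establish (i) by an elementary computation that isolates the counit of $K$, and then to bootstrap (ii) from (i) together with finite generation of $H$ over $K$.

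First I would prove (i). The key observation is that for $k\in K$ the element $k-\varepsilon(k)1_H$ lies in $K^+=\ker\varepsilon_K$, since $\varepsilon_K$ is an algebra map with $\varepsilon_K(1_K)=1$ and the inclusion sends $1_K$ to $1_H$. Writing $hk=h\,(k-\varepsilon(k)1_H)+\varepsilon(k)\,h$ and noting that $h\,(k-\varepsilon(k)1_H)\in HK^+$ by the very definition of $HK^+$ as the left $H$-submodule of $H$ generated by $K^+$, I conclude that $hk$ and $\varepsilon(k)h$ have the same image in $\ov{H}=H/HK^+$, that is, $\ov{hk}=\varepsilon(k)\ov{h}$.

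For (ii), I would invoke the Remark at the start of this section that, for the extensions considered here, $H$ is finite over $K$ as a right $K$-module precisely when it is finite as a left $K$-module, and accordingly choose right-module generators $h_1,\dots,h_n$ so that $H=\sum_{i=1}^n h_iK$. Given any $h\in H$, I write $h=\sum_i h_ik_i$ with $k_i\in K$ and apply (i) termwise to obtain $\ov{h}=\sum_i \varepsilon(k_i)\,\ov{h_i}$. Hence $\ov{H}=\Span\{\ov{h_1},\dots,\ov{h_n}\}$ is spanned by finitely many elements and is therefore finite-dimensional, with $\dim_\Bbbk \ov{H}\le n$.

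I expect the only genuine subtlety to be the choice of module side: part (i) says precisely that the \emph{right} $K$-action on $\ov{H}$ factors through $\varepsilon_K$, so it is the right-module finiteness of $H$ over $K$ that must be used in (ii); the left $H$-action (hence the left $K$-action) on $\ov{H}$ is in general nontrivial and would not give the reduction. Beyond this, the argument is a direct unwinding of the definitions and requires no further Hopf-algebraic input.
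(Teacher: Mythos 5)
Your proof is correct and follows essentially the same route as the paper: part (i) via the decomposition $k=(k-\varepsilon(k)1)+\varepsilon(k)1$ with the first summand in $K^+$, and part (ii) by expanding $h=\sum_i h_i k_i$ and applying (i) termwise. Your extra remark about needing right-module generators (justified by the equivalence of left and right finiteness for Hopf algebras with invertible antipode) is a point the paper passes over silently, and it is handled correctly.
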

\begin{proof}
To prove Part (i), note that $K^+\subset K$ is a complement to the space $\Bbbk 1_K$ as a $\Bbbk$-vector space. Thus, any $k\in K$ decomposes as 
$k=k'+\varepsilon(k)1$ with $k'\in K^+$ and hence 
$$\ov{hk}=\ov{hk'}+\varepsilon(k)\ov{h}=\varepsilon(k)\ov{h}.$$

To prove Part (ii), assume $H$ is generated by a finite set of elements $\{h_i\}_i$ over $K$. Then for every $h\in H$, there are elements $\{k_i\}_i$ in $K$ such that $h=\sum_i h_i k_i$, and 
$$ \ov{h} = \sum_i \ov{h_i k_i} = \sum_i \varepsilon(k_i) \ov{h_i},$$
by Part (i). 
Hence, $H/HK^+$ is finite-dimensional.
\end{proof}

\subsection{Frobenius extensions of right integral type}
\label{sec:FrobExt}

In this section, we recall extensions of Hopf algebras of right integral type \cite{FMS}*{1.10.\,Definition} and their main properties.

\begin{definition}[Right integral type]\label{def:integral-type}
    We say that an extension $K\subset H$ of Hopf algebras is of \emph{right integral type} if there exists:
    \begin{enumerate}[(a)]
        \item A non-zero right integral $\lambda$ for $\ov{H}^*$;
        \item An element $\Lambda \in H$ such that, for all $h\in H$,
        $$\lambda(\ov{\Lambda h})=\varepsilon(h);$$
        \item A \emph{relative modular function}, 
i.e.\,a morphism of algebras $\chi\colon K\to \Bbbk$ such that, for all $k\in K$, $h\in H$,
\begin{equation}\lambda(\ov{kh})=\chi(k)\lambda(\ov{h}).\label{eq:chi-condition}\end{equation}
    \end{enumerate}
\end{definition}

\begin{remark}
    Note that the conditions (a)--(b) in \Cref{def:integral-type} are existence conditions for certain integral elements related to the Hopf algebra extension $K\subset H$. 
    In particular, if $H$ is finite-dimensional, there exists a non-zero right integral $\Lambda$ for $H$, satisfying $\Lambda h=\Lambda \varepsilon(h)$, and a right integral $\lambda$ for $\ov{H}^*$ such that condition (b) holds. Further, if the space of right integrals for $\ov{H}^*$ is one-dimensional (e.g.\,when $\ov{H}^*$ is finite-dimensional and $HK^+$ is a two-sided ideal and hence $\ov{H}^*$ is a finite-dimensional Hopf algebra), then the relative modular function $\chi$ always exists. In general, the existence of $\lambda,\Lambda,$ and $\chi$ is not guaranteed.
\end{remark}

\begin{remark} \label{rem:uniqueness-chi-1}
    Note also that if $\chi$ as in (c) exists, then it is unique for a given $\lambda$, and in fact, determined by \Cref{eq:chi-condition} for any choice of $\ov h$ such that $\lambda(\ov h)\neq0$. Moreover, if the space of right-integrals of $\ov{H}^*$ is one-dimensional, then $\chi$ is unique. The map $\chi$ is not to be confused with the map $\alpha_K$ from \Cref{eq:modularfun}.
\end{remark}

We recall some results from \cite{FMS}.

\begin{definition}[\cite{FMS}*{1.1.~Definition}] Let $K\subset H$ be an extension of Hopf algebras, let $\beta$ be an algebra automorphism of $K$. Let $K^\beta$ be the right $K$-module whose action is twisted by $\beta$. We say $K\subset H$ is a \emph{$\beta$-Frobenius extension} if $H$ is finite projective over $K$ and $H\cong\Hom_K(H,K^\beta)$ as a $H$-$K$-bimodule. 
\end{definition}

The following argument is taken from \cite{FMS}*{1.8.~Corollary}. 

\begin{lemma} \label{lem:Frob-v-beta-Frob}
Assume $K\subset H$ is a $\beta$-Frobenius extension and there is a linear map $\chi\colon K\to\Bbbk$ such that $\beta(k)=\chi(k_{(1)}) k_{(2)}$ for all $k\in K$. Then the following are equivalent:
\begin{enumerate}[(a)]
    \item $K\subset H$ is a Frobenius extension,
    \item $\chi=\varepsilon$,
    \item $\beta=\id$.
\end{enumerate}
\end{lemma}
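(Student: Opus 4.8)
The plan is to prove the cycle of implications $(a)\Rightarrow(b)\Rightarrow(c)\Rightarrow(a)$, since each arrow can be established more cleanly in one direction than by trying to prove pairwise equivalences directly. The governing identity throughout is the hypothesis $\beta(k)=\chi(k_{(1)})k_{(2)}$ for all $k\in K$, which says that $\beta$ is the convolution $\chi * \id$; this links the algebra automorphism $\beta$ to the linear map $\chi$.

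\textbf{The implication $(c)\Rightarrow(a)$} is immediate: if $\beta=\id$, then $K^\beta=K$ as a right $K$-module, so the defining isomorphism $H\cong \Hom_K(H,K^\beta)$ of a $\beta$-Frobenius extension becomes exactly the isomorphism $H\cong\Hom_K(H,K)$ of $H$-$K$-bimodules required by \Cref{thm:ind-coind}(ii)(b). Together with finite projectivity (part of the $\beta$-Frobenius hypothesis), this is precisely a Frobenius extension in the sense of \Cref{def:Frob-mor}.

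\textbf{For $(b)\Rightarrow(c)$}, suppose $\chi=\varepsilon$. Then the hypothesis gives $\beta(k)=\varepsilon(k_{(1)})k_{(2)}=k$ by counitality of the coproduct, so $\beta=\id$. This is the easy direction and essentially just uses the counit axiom.

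\textbf{The implication $(a)\Rightarrow(b)$} is the main obstacle and the heart of the lemma. Here I would compare two bimodule isomorphisms $H\to\Hom_K(H,K)$: the one coming from the Frobenius extension hypothesis $(a)$ and the one obtained from the $\beta$-Frobenius structure by post-composing with $\beta^{-1}$ (or suitably twisting). The point is that a right-$K$-linear isomorphism $H\cong\Hom_K(H,K^\beta)$ differs from one into $\Hom_K(H,K)$ precisely by the twist $\beta$, and the right $K$-module structures on $K$ and $K^\beta$ agree as plain modules if and only if $\beta$ acts trivially. More concretely, both $(a)$ and the $\beta$-Frobenius datum supply Frobenius-type morphisms $\tr,\tr':H\to K$ as in \Cref{thm:ind-coind}(ii)(b), and the relation between them is controlled by $\beta$: one finds $\tr'(h) = \beta(\tr(h))$ or a variant thereof. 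Since both must be $K$-$K$-bimodule maps inducing bijections, comparing their right-$K$-linearity over the two structures forces $\beta=\id$ on the image, which by the automorphism property spreads to all of $K$; feeding $\beta=\id$ back through the hypothesis $\beta(k)=\chi(k_{(1)})k_{(2)}$ and applying the counit (or evaluating against $\varepsilon_K$) then yields $\chi=\varepsilon$. I expect the technical care to lie in tracking how the two right $K$-module structures on $K$ versus $K^\beta$ interact under the bimodule isomorphisms, and in showing the identity $\tr'=\beta\circ\tr$ (or its precise form) cleanly enough to extract $\beta=\id$; this is exactly where \cite{FMS}*{1.8.~Corollary} and \cite{OSch}*{4.8~Proposition} do the work, so I would follow their argument.
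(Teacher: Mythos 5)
Your reduction of the lemma to the single nontrivial implication $(a)\Rightarrow(b)$ matches the paper exactly, and your arguments for $(b)\Rightarrow(c)$ (counit axiom) and $(c)\Rightarrow(a)$ (the twist disappears) are correct and are the same one-line observations the paper makes. The gap is in your sketch of $(a)\Rightarrow(b)$. You propose to show that comparing the two bimodule isomorphisms ``forces $\beta=\id$ on the image, which by the automorphism property spreads to all of $K$,'' and then to deduce $\chi=\varepsilon$. Neither half of this works as stated. First, the comparison does not force $\beta=\id$: composing the isomorphisms $H\cong\Hom_K(H,K)$ and $H\cong\Hom_K(H,K^\beta)$ yields an $H$-$K$-bimodule isomorphism $\phi\colon H\to H^\beta$, which by left $H$-linearity is right multiplication by the unit $u=\phi(1)$, and right $K$-linearity gives only the relation $ku=u\beta(k)$, i.e.\ $\beta$ is conjugation by $u$ on $K$. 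Conjugation by an element of $H$ restricted to $K$ has no reason to be the identity (think of a group algebra extension $\Bbbk\sfK\subset\Bbbk\sfG$ with $g\in\sfG$ normalizing $\sfK$), and since $\phi$ is not unique you cannot extract $\beta=\id$ from its existence. Second, even if you knew $\beta=\id$ on some subset, ``spreading by the automorphism property'' is not a valid inference unless that subset generates $K$ as an algebra, which the image of a Frobenius morphism need not do.

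What the relation $ku=u\beta(k)$ actually yields, after applying $\varepsilon$ and using $\varepsilon(u)\neq0$ (which follows from $u$ having a right inverse $\phi^{-1}(1)$), is the much weaker identity $\varepsilon\circ\beta=\varepsilon$. This alone does not give $\beta=\id$; the hypothesis $\beta=\chi*\id$ is essential at this point: it gives $\chi(k)=\chi(k_{(1)})\varepsilon(k_{(2)})=\varepsilon(\beta(k))=\varepsilon(k)$, hence $\chi=\varepsilon$, which is exactly statement (b), and only then does (c) follow. So the correct order of deductions is $(a)\Rightarrow\bigl(\varepsilon\beta=\varepsilon\bigr)\Rightarrow(b)\Rightarrow(c)$, whereas your sketch tries to reach (c) first and has no mechanism to do so. Since you explicitly defer the key computation to the references rather than carry it out, and the intermediate claim you do assert is false, this implication is not established by your proposal.
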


\begin{proof} The immediate implications are (b) $\Rightarrow$ (c) $\Rightarrow$ (a), so it remains to prove (a) $\Rightarrow$ (b).

Assume $K\subset H$ is a $\beta$-Frobenius extension and a Frobenius extension. Then we have an isomorphism $\phi\colon H\to H^{\beta}$ of $H$-$K$-bimodules, where the right $K$-action on $H^{\beta}$ is twisted by $\beta$. Set $u:=\phi(1)$. We compute
$$
u\phi^{-1}(1)  = \phi^{-1}(\phi(1)) = 1, 
$$
in particular, $\varepsilon(u)\neq0$. We also compute for all $k\in K$
$$
k u = \phi(k) = u \beta(k) , 
$$
which implies $\varepsilon \beta=\varepsilon$ on $K$. Finally, for all $k\in K$,
$$
\varepsilon\beta(k) = \chi(k_{(1)}) \varepsilon(k_{(2)}) = \chi(k) .
$$
We have shown $\chi=\varepsilon\beta=\varepsilon$.
\end{proof}

We recall the following key result obtained from \cite{Sch}*{3.3.\,Theorem}.

\begin{theorem}[Schneider]\label{thm:Schneider}
    Assume that $K\subset H$ is an extension of Hopf algebras of right integral type such that $H$ is finite faithfully flat over $K$. Then:
    \begin{enumerate}[(a)]
        \item $H$ is finite projective over $K$.
        \item $K\subset H$ is a Frobenius extension if and only if $\chi=\varepsilon_K$.
        \item If the equivalent conditions of (b) hold, a choice of Frobenius morphism in the sense of \Cref{def:Frob-mor} is given by setting 
    \begin{equation}\label{eq:good-Frob-morphism}
    \tr\colon H\to K, \quad \tr(h):= \lambda(\ov{h_{(1)}}) h_{(2)}\in K,
    \end{equation}
    for any non-zero right integral $\lambda$ of $\ov{H}^*$.
    \end{enumerate}
\end{theorem}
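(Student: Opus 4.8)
The plan is to deduce this from the $\beta$-Frobenius framework already set up in \Cref{lem:Frob-v-beta-Frob}, using the relative modular function $\chi$ from the right-integral-type hypothesis as the twist. First I would construct, from the right integral $\lambda$ of $\ov{H}^*$ and the element $\Lambda\in H$ of \Cref{def:integral-type}(b), an explicit candidate isomorphism $\phi\colon H\to \Hom_K(H,K^\beta)$ of $H$-$K$-bimodules. The natural formula to try is $\phi(h)\colon g\mapsto \tr(gh)$ with $\tr$ as in \eqref{eq:good-Frob-morphism}, i.e.\ $\tr(h)=\lambda(\ov{h_{(1)}})h_{(2)}$; one checks that $\tr$ lands in $K$ because $\lambda$ is a right integral for $\ov{H}^*$ together with \Cref{lem:basisovH}(i). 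The twisting automorphism $\beta$ of $K$ is then forced by the $\chi$-condition \eqref{eq:chi-condition}: setting $\beta(k)=\chi(k_{(1)})k_{(2)}$ makes $\phi$ right-$K$-linear into $K^\beta$ rather than into $K$, which is exactly the hypothesis feeding \Cref{lem:Frob-v-beta-Frob}.

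For Part (a), the finite projectivity, I would invoke faithful flatness directly: Schneider's theorem \cite{Sch}*{3.3.\,Theorem} shows that for a faithfully flat extension $H$ over $K$ the right-integral-type data produces a dual basis, and finite faithful flatness over a (not-necessarily-commutative) base together with finite generation yields projectivity. Concretely, the pair $\lambda,\Lambda$ furnishes a \emph{Frobenius system}: one verifies that $\Lambda_{(1)}\otimes_K \lambda(\ov{\Lambda_{(2)}(-)})$ behaves as a dual basis in the sense of \Cref{lem:kideltai}, which is precisely the data witnessing that $H$ is finite projective as a $K$-module. This is the technical heart imported from \cites{Sch,FMS}, and I would cite it rather than reprove it.

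For Part (b), the equivalence of being a genuine Frobenius extension with $\chi=\varepsilon_K$, I would simply apply \Cref{lem:Frob-v-beta-Frob}. By Part (a) and the construction above, $K\subset H$ is a $\beta$-Frobenius extension with $\beta(k)=\chi(k_{(1)})k_{(2)}$, and the linear map $\chi$ satisfies exactly the hypothesis of that lemma. Hence conditions (a)--(c) there are equivalent, and in particular $K\subset H$ is a Frobenius extension if and only if $\chi=\varepsilon$. (One should note the harmless ambiguity that $\varepsilon$ on $K$ is $\varepsilon_K$, so this reads $\chi=\varepsilon_K$.)

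For Part (c), once $\chi=\varepsilon_K$ the twist $\beta$ is trivial and $\phi$ becomes an isomorphism $H\cong\Hom_K(H,K)$ of $H$-$K$-bimodules, so $\tr$ of \eqref{eq:good-Frob-morphism} is a Frobenius morphism in the sense of \Cref{def:Frob-mor}. The main obstacle I anticipate is verifying that $\phi$ is genuinely bijective (not merely well-defined and bimodule-linear): this is where finite projectivity from Part (a) is indispensable, since bijectivity of the evaluation pairing $h\mapsto\tr((-)h)$ is equivalent to the dual-basis identities \eqref{eq:kideltai2}, and establishing those rests on the full strength of the faithfully flat descent in \cite{Sch}. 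The remaining steps—$K$-linearity computations and the counit normalization $\lambda(\ov{\Lambda h})=\varepsilon(h)$ forcing $\tr$ to be a Frobenius morphism—are routine Sweedler-notation manipulations.
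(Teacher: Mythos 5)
Your proposal follows essentially the same route as the paper's proof: both reduce to the $\beta$-Frobenius statement of \cite{FMS}*{1.14.~Theorem} (the right-hand version of Schneider's theorem), identify the twist as $\beta(k)=\chi(k_{(1)})k_{(2)}$ via the relative modular function, deduce (a) from the cited theorem and (b) from \Cref{lem:Frob-v-beta-Frob}, and read off the Frobenius morphism \eqref{eq:good-Frob-morphism} as the map $f$ of the cited theorem. One point to correct: your claim that $\tr(h)=\lambda(\ov{h_{(1)}})h_{(2)}$ lands in $K$ ``because $\lambda$ is a right integral together with \Cref{lem:basisovH}(i)'' is not sufficient as stated. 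The right integral property only yields $\lambda(\ov{h_{(1)}})\,\ov{h_{(2)}}=\lambda(\ov{h})\,\ov{1}$ in $\ov{H}$, i.e.\ that $\tr(h)$ is coinvariant for the induced left $\ov{H}$-coaction on $H$; to conclude $\tr(h)\in K$ one needs the identification $H^{\mathrm{co}\,\ov{H}}=K$, which is exactly the faithfully flat descent input the paper imports from \cite{Sch}*{3.1.~Theorem} and \cite{FMS}*{2.4.~Lemma}. Since your argument already defers to \cite{Sch} for the descent content elsewhere, this is an imprecision in attribution rather than a structural gap, but the stated reason would not compile into a proof on its own.
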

\begin{proof}
We apply the statement in \cite{FMS}*{1.14.\,Theorem} (as a right-hand version of \cite{Sch}*{3.3.\,Theorem}) in the special case of $H=W=A$ and $K=U$. Then $A^{\mathrm{co} H}=\Bbbk 1_H$ and the extension $\Bbbk\subset H$ is Galois. Moreover, it follows from \cite{FMS}*{2.4.\,Lemma} that $H^{\mathrm{co} \ov{H}}=K$. 
Thus, \cite{FMS}*{1.14.\,Theorem} implies that $K\subset H$ is a $\beta$-Frobenius extension such that $\beta(k)=\chi(k_{(1)})k_{(2)}$, with $\chi$ the relative modular function. In particular, $H$ is finite projective over $K$, proving (a), and (b) follows from \Cref{lem:Frob-v-beta-Frob}.

Finally, the formula for $\tr$ in (c) is the formula for the Frobenius morphism denoted by $f$ in \cite{FMS}*{1.14.\,Theorem} specified to the above setup. 
    Note that $\tr$ indeed has image contained in $K$ by the argument in \cite{Sch}*{3.1.\,Theorem}.  
\end{proof}

\begin{remark}
In the context of abelian tensor categories, which are, in particular, rigid, \cite{Shi5}*{Section~4.2} provides a generalization of the relative modular function of \cite{FMS} --- the \emph{relative modular object} $\chi_G$ associated with a strong monoidal functor $G\colon \cC\to \cD$ which preserves projective objects. This object $\chi_G$ controls whether $G$ is a Frobenius functor. In the case when $G=\Res_K^H$ for an extension of finite-dimensional Hopf algebras, $\chi_G$ is the one-dimensional $K$-module given by acting via the relative modular function $\chi$ from \Cref{eq:chi-condition}.
\end{remark}

A condition on $H$ that implies that \emph{any} extension $K\subset H$ is of right integral type is cocommutativity of the \emph{coradical} $H_0$ of $H$ (see, e.g.\,\cite{Mon}). 

\begin{corollary}[{\cite{Sch}*{4.3.\,Corollary}}]\label{cor:H0-cocomm}
    Let $H$ be a Hopf algebra such that $H_0$ is cocommutative. Then any finite extension $K\subset H$ of Hopf algebras is of right integral type such that $H$ is faithfully flat over $K$.
\end{corollary}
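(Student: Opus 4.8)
The plan is to derive both assertions---faithful flatness and right integral type---from the structure theory of Hopf--Galois extensions, with cocommutativity of $H_0$ serving as the single hypothesis that makes that machinery apply. The two claims are logically separate, so I would treat faithful flatness first and then verify the three existence conditions of \Cref{def:integral-type} in turn.

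For faithful flatness, I would invoke the structural result that a Hopf algebra whose coradical is cocommutative is faithfully flat (in fact free) as a module over any Hopf subalgebra; this is precisely the point at which cocommutativity is indispensable, since it is what guarantees a normal basis for the extension $K\subset H$. Combined with the finiteness hypothesis ($H$ finitely generated over $K$), this yields that $H$ is finite faithfully flat over $K$, as required, and simultaneously places us in the situation of \Cref{thm:Schneider}.

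Next I would set up the $\ov{H}$-comodule algebra structure on $H$. Projecting along $\pi\colon H\to\ov{H}$ makes $H$ a right $\ov{H}$-comodule via $(\id\otimes\pi)\Delta_H$, and---exactly as in the proof of \Cref{thm:Schneider}, using \cite{FMS}*{2.4.\,Lemma}---faithful flatness gives $H^{\mathrm{co}\,\ov{H}}=K$ and makes $K\subset H$ an $\ov{H}$-Galois extension. By \Cref{lem:basisovH}(ii) the coalgebra $\ov{H}$ is finite-dimensional, so $\ov{H}^*$ is a finite-dimensional augmented algebra; its coradical is the image of $H_0$ under $\pi$ and is therefore still cocommutative. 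I would then obtain a nonzero right integral $\lambda$ for $\ov{H}^*$ (condition (a)) together with an element $\Lambda\in H$ satisfying $\lambda(\ov{\Lambda h})=\varepsilon(h)$ (condition (b), a total-integral condition for the Galois extension) from the existence-of-integrals results available for faithfully flat Galois extensions with cocommutative quotient coradical.

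Finally, for the relative modular function I would use that the space of right integrals of $\ov{H}^*$ is one-dimensional---which holds because $\ov{H}$ behaves like the coalgebra underlying a finite-dimensional Hopf-algebra quotient, where the Larson--Sweedler argument applies. One-dimensionality lets the line $\Bbbk\lambda$ carry a scalar action of $K$ pulled back along $K\to\ov{H}^*$, and the resulting algebra map $\chi\colon K\to\Bbbk$ with $\lambda(\ov{kh})=\chi(k)\lambda(\ov{h})$ is the relative modular function, its multiplicativity and well-definedness being forced as in \Cref{rem:uniqueness-chi-1}. The main obstacle is the external input concentrated in the first and last steps: proving faithful flatness from cocommutativity, and producing the integrals $\lambda,\Lambda$ with the required one-dimensionality. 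This is genuinely the content imported from \cite{Sch}; cocommutativity of $H_0$ is exactly what controls the quotient coalgebra $\ov{H}$ and its integrals, and without it $\ov{H}$ can fail to be flat and the coinvariants can exceed $K$.
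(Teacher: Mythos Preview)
Your high-level structure matches the paper's: both reduce to external results, citing Takeuchi for faithful flatness and \cite{FMS}*{4.9.\,Corollary} for right integral type. The paper's own proof is literally those two citations and nothing more.

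Your attempt to sketch the content of \cite{FMS}*{4.9} has two concrete gaps, though. First, the claim that the coradical of $\ov{H}$ is the image $\pi(H_0)$ is not true for quotient coalgebras in general, and cocommutativity of $(\ov{H})_0$ is not in fact what drives the argument. Second, $\ov{H}$ is only a coaugmented coalgebra, not a Hopf algebra unless $K$ is normal, so the Larson--Sweedler uniqueness-of-integrals argument does not apply directly; your phrase ``behaves like the coalgebra underlying a finite-dimensional Hopf-algebra quotient'' is exactly the assertion that would need proof. The actual mechanism in \cite{FMS} runs through the normal basis property: Schneider shows that cocommutativity of $H_0$ forces $H$ to have a right normal basis over $K$, and then \Cref{prop:normal-basis-integral} (i.e.\ \cite{FMS}*{4.8}) converts the normal basis into right integral type together with one-dimensionality of the integral space (cf.\ \Cref{rem:int-1-d}). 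You mention the normal basis but use it only for flatness; it is in fact the engine for both conclusions.
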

\begin{proof}
If the coradical of $H$ is cocommutative and $H$ is finite over $K$, then $H$ is faithfully flat over $K$ by \cite{Tak}. That $K\subset H$ is of right integral type was proved in \cite{FMS}*{4.9.\,Corollary}.
\end{proof}

A Hopf algebra is \emph{pointed} if every simple subcoalgebra is one-dimensional.

\begin{corollary}\label{cor:H-pointed}
Let $H$ be a pointed Hopf algebra. Then any finite extension $K\subset H$ is of right integral type and $H$ is free over $K$.    
\end{corollary}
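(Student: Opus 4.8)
The plan is to reduce \Cref{cor:H-pointed} to \Cref{cor:H0-cocomm} by verifying its hypotheses, namely that the coradical $H_0$ of a pointed Hopf algebra is cocommutative, and to upgrade the faithful flatness conclusion to freeness using the structure theory of pointed Hopf algebras. First I would recall that, by definition, $H$ is pointed precisely when every simple subcoalgebra is one-dimensional, which is equivalent to saying that the coradical $H_0$ is spanned by the grouplike elements $G(H)$, i.e.\ $H_0 = \Bbbk G(H)$. Since $G(H)$ forms a group under the product of $H$ and $\Delta_H(g) = g \otimes g$ for every grouplike $g$, the coradical $H_0 = \Bbbk G(H)$ is the group algebra of $G(H)$, and a group algebra is always cocommutative because $\Delta(g) = g\otimes g$ is manifestly symmetric under the flip. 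This establishes that $H_0$ is cocommutative, so \Cref{cor:H0-cocomm} applies and yields that any finite extension $K\subset H$ is of right integral type with $H$ faithfully flat over $K$.

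It remains to improve faithful flatness to freeness. Here I would invoke the theorem of Radford on pointed Hopf algebras, which asserts that if $H$ is pointed and $K\subset H$ is a Hopf subalgebra, then $H$ is free as a left (equivalently right) $K$-module; more precisely, any Hopf subalgebra of a pointed Hopf algebra is a normal-basis-type direct summand and $H$ admits a basis over $K$. The key structural input is that a Hopf subalgebra $K$ of a pointed Hopf algebra $H$ is itself pointed with $G(K) = G(H)\cap K$ a subgroup of $G(H)$, and the coradical filtration of $H$ is compatible with the $K$-module structure; freeness then follows from the Nichols--Zoeller-type freeness results adapted to the pointed setting. Since we assume the extension is finite, the free module is of finite rank, recovering the dual bases $h_1,\dots,h_n$ and $\delta_1,\dots,\delta_n$ of \Cref{lem:kideltai}.

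The main obstacle I anticipate is the freeness claim rather than the cocommutativity of $H_0$, which is essentially immediate. Faithful flatness over $K$ is already delivered by \Cref{cor:H0-cocomm}, but freeness is a strictly stronger condition and requires either citing Radford's freeness theorem for pointed Hopf algebras directly or arguing via the associated graded Hopf algebra $\operatorname{gr} H$ with respect to the coradical filtration, where the pointed structure makes $\operatorname{gr} H$ a bosonization (Radford biproduct) over the group algebra $\Bbbk G(H)$, so that freeness over the corresponding graded subalgebra can be read off and then lifted back to $H$ by a filtered-to-graded argument. I would therefore present the proof as a two-line reduction: cite \Cref{cor:H0-cocomm} for right integral type and faithful flatness, and cite the appropriate freeness result for Hopf subalgebras of pointed Hopf algebras (e.g.\ the relevant result in \cite{Rad} or \cite{Mon}) for the final strengthening to freeness.
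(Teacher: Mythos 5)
Your proposal matches the paper's proof: the paper likewise observes that the coradical of a pointed Hopf algebra is a group algebra, hence cocommutative, so \Cref{cor:H0-cocomm} gives right integral type and faithful flatness, and then cites Radford's freeness theorem (the reference \cite{Rad3} in the paper) for the upgrade to freeness. Your extra discussion of how one might prove the freeness theorem is not needed, but the two-line reduction you describe at the end is exactly the paper's argument.
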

\begin{proof}
    For any pointed Hopf algebra, $H_0$ is a group algebra and hence cocommutative. Freeness of $H$ over $K$ is a theorem from \cite{Rad3}.
\end{proof}

\subsection{Normal bases and extensions of right integral type}\label{sec:FrobExt2}

In the following, we recall classes of extensions of Hopf algebras which are of right integral type. The next definition follows \cite{FMS}*{4.1.\,Definition}.

\begin{definition}[Right normal basis]\label{def:normal-basis}
    We say that $H$ has a \emph{right normal basis} over $K$ if there exists an isomorphism $H\cong \ov{H}\otimes K$ of right $K$-modules and left $\ov{H}$-comodules. Here, $H$ has the induced left $\ov{H}$-comodule structure.
\end{definition}

In particular, the existence of a right normal basis for $H$ over $K$ implies that $H$ is free as a right $K$-module. In particular, $H$ is faithfully flat over $K$. Moreover, \cite{FMS}*{4.8.\,Theorem} proves the following result. 

\begin{proposition}\label{prop:normal-basis-integral}
   Let $K\subset H$ be a finite extension of Hopf algebras. If $H$ has a right normal basis over $K$, then $K\subset H$ is of right integral type and $H$ is free over $K$. 
\end{proposition}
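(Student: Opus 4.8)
The plan is to treat the two conclusions separately: freeness is immediate, and right integral type is the real content. First I would record that $\ov{H}$ is finite-dimensional by \Cref{lem:basisovH}(ii), so that the right normal basis isomorphism $H\cong\ov{H}\otimes K$ of \Cref{def:normal-basis} exhibits $H$ as a free right $K$-module on any $\Bbbk$-basis of $\ov{H}$. In particular $H$ is free, hence faithfully flat, over $K$; this settles the last assertion and simultaneously secures the flatness hypothesis needed to invoke \Cref{thm:Schneider}.

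It then remains to produce the three data $(\lambda,\Lambda,\chi)$ required by \Cref{def:integral-type}. I would first unwind the normal basis isomorphism $\phi\colon\ov{H}\otimes K\isomorph H$. Being right $K$-linear, it has the form $\ov{h}\otimes k\mapsto\gamma(\ov{h})\,k$ for a linear map $\gamma\colon\ov{H}\to H$; being left $\ov{H}$-colinear for the coaction $h\mapsto\ov{h_{(1)}}\otimes h_{(2)}$ on $H$ (and the coaction on the first tensor factor of $\ov{H}\otimes K$) forces $\gamma$ to be a colinear section of the coalgebra projection $H\to\ov{H}$. Thus a right normal basis is precisely a cleaving of the underlying coalgebra-Galois extension, and the task reduces to transporting the relevant integral along $\phi$.

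The heart of the argument — and the step I expect to be the main obstacle — is the construction of a non-zero right integral $\lambda$ for $\ov{H}^*$ together with an element $\Lambda\in H$ realizing the nondegeneracy condition $\lambda(\ov{\Lambda h})=\varepsilon(h)$. This is genuinely where the normal basis is indispensable: a finite-dimensional augmented algebra need not admit any non-zero right integral, so $\lambda$ cannot be extracted from the coalgebra $\ov{H}$ alone. The point is that the isomorphism $H\cong\ov{H}\otimes K$ of left $\ov{H}$-comodules exhibits the regular comodule $\ov{H}$ as an injective (cofree) $\ov{H}$-comodule, and the resulting co-Frobenius structure on $\ov{H}$ yields a one-dimensional space of right integrals in $\ov{H}^*$, from which $\lambda$ is chosen. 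The matching $\Lambda$ is then built from the section $\gamma$ by pairing $\lambda$ against the free right $K$-basis $\{\gamma(\ov{x_i})\}$ of $H$, and verifying the identity $\lambda(\ov{\Lambda h})=\varepsilon(h)$ — which is exactly where the colinearity of $\gamma$ is used — is the main computation.

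Finally, for the relative modular function I would exploit that $\ov{H}$ is a left $H$-module coalgebra, so $K$ acts on $\ov{H}$ and dually on $\ov{H}^*$, stabilizing the one-dimensional space of right integrals. Consequently $K$ acts on $\Bbbk\lambda$ by an algebra character $\chi\colon K\to\Bbbk$, which is precisely the requirement $\lambda(\ov{kh})=\chi(k)\lambda(\ov{h})$ of \Cref{def:integral-type}(c); establishing multiplicativity of $\chi$ and the eigenvector property is a short check using the module-coalgebra compatibility. Assembling $(\lambda,\Lambda,\chi)$ then shows that $K\subset H$ is of right integral type, completing the proof.
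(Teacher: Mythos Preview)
The paper does not actually prove this proposition: it simply attributes the result to \cite{FMS}*{4.8.\,Theorem} and moves on. So there is no argument in the paper to compare against; your proposal is an attempt to reconstruct the FMS proof rather than an alternative to anything the authors wrote.

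Your outline is broadly in the right spirit --- freeness is indeed immediate from \Cref{lem:basisovH}(ii) and the normal basis isomorphism, and the identification of a right normal basis with a colinear section (cleaving) of $H\to\ov{H}$ is the standard way to unpack \Cref{def:normal-basis}. The construction of $\chi$ from a one-dimensional space of right integrals is also the correct mechanism, matching what is recorded later in \Cref{rem:int-1-d}.

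The step that is genuinely loose is your argument for the existence of a non-zero right integral $\lambda$. You write that the isomorphism $H\cong\ov{H}\otimes K$ of left $\ov{H}$-comodules ``exhibits the regular comodule $\ov{H}$ as an injective (cofree) $\ov{H}$-comodule,'' but this does not follow: what the isomorphism shows is that $H$ is cofree as an $\ov{H}$-comodule, not that $\ov{H}$ is injective over itself. The passage from a cleaving section $\gamma$ to a co-Frobenius structure on $\ov{H}$ (equivalently, self-injectivity of $\ov{H}^*$ and hence existence and one-dimensionality of the integral space) requires additional work --- in FMS this goes through the Hopf--Galois machinery and the smash-product description of $H$, not a one-line injectivity observation. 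Similarly, your description of $\Lambda$ as ``built from the section $\gamma$ by pairing $\lambda$ against the free right $K$-basis'' is too vague to assess; the actual verification that $\lambda(\ov{\Lambda h})=\varepsilon(h)$ is the substantive computation and you have not indicated how it would go. If you intend to give a self-contained proof rather than cite FMS, these two points need to be filled in.
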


Combining this result with \cite{Sch}*{2.4.\,Theorem} yields the following result.

\begin{corollary}\label{cor:H-fin-dim-integral}
    Assume $H$ is finite-dimensional and $K\subset H$ is a Hopf subalgebra. Then $K\subset H$ is an extension of right integral type such that $H$ is free over $K$.
\end{corollary}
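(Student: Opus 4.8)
The plan is to reduce \Cref{cor:H-fin-dim-integral} to the two results already assembled just above it, namely \Cref{prop:normal-basis-integral} (a finite extension admitting a right normal basis is of right integral type with $H$ free over $K$) and Schneider's normal basis theorem from \cite{Sch}*{2.4.\,Theorem}. The only real content is to verify that the hypotheses of \Cref{prop:normal-basis-integral} are met in the finite-dimensional situation, so that its conclusion applies verbatim.

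First I would record that a finite-dimensional Hopf algebra $H$ is automatically a finite extension of any Hopf subalgebra $K\subset H$: since $\dim_\Bbbk H<\infty$, the algebra $H$ is certainly finitely generated as a left (equivalently, by the \Cref{sec:Ind-Coind-iso} remark on invertible antipodes, right) $K$-module. Thus the finiteness hypothesis in both \Cref{prop:normal-basis-integral} and Schneider's theorem is satisfied for free.

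Next I would invoke the normal basis theorem \cite{Sch}*{2.4.\,Theorem}: for a finite-dimensional Hopf algebra $H$ and a Hopf subalgebra $K$, the extension $K\subset H$ admits a right normal basis in the sense of \Cref{def:normal-basis}, i.e.\ there is an isomorphism $H\cong\ov{H}\otimes K$ of right $K$-modules and left $\ov{H}$-comodules, with $\ov{H}=H/HK^+$ finite-dimensional by \Cref{lem:basisovH}(ii). The hypotheses of \Cref{prop:normal-basis-integral} now hold: $K\subset H$ is a finite extension possessing a right normal basis over $K$. Applying that proposition gives at once that $K\subset H$ is of right integral type and that $H$ is free over $K$, which is exactly the assertion.

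I do not expect a genuine obstacle here, since the statement is a corollary whose proof is purely a matter of checking that the inputs align; the one point to be careful about is the direction of Schneider's theorem. The cited \cite{Sch}*{2.4.\,Theorem} must be applied in the \emph{right}-handed form matching \Cref{def:normal-basis} (a right $K$-module, left $\ov{H}$-comodule splitting), rather than the left-handed variant; as noted in the excerpt, the symmetry afforded by invertibility of the antipode makes the freeness conclusion insensitive to sides, but one should cite the appropriate version so that the comodule structure on $\ov{H}$ is the induced left one. With that caveat the argument is a one-line chain: finite-dimensionality $\Rightarrow$ finite extension with normal basis (Schneider) $\Rightarrow$ right integral type and freeness (\Cref{prop:normal-basis-integral}).
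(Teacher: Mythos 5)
Your argument matches the paper's proof exactly: both deduce from \cite{Sch}*{2.4.\,Theorem} that $H$ has a right normal basis over $K$ and then apply \Cref{prop:normal-basis-integral} to conclude right integral type and freeness. Your additional remarks on verifying finiteness and on using the right-handed version of Schneider's theorem are correct and consistent with the paper's (terser) treatment.
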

\begin{proof}
    Note that \cite{Sch}*{2.4.\,Theorem} shows that $H$ has a right normal basis over $K$ and is, in particular, free over $K$. 
\end{proof}

\begin{remark} \label{rem:int-1-d}
In the cases when $H$ has a right normal basis over $K$ or $H_0$ is cocommutative, or when $H$ is finite-dimensional or pointed, we have seen that $K\subset H$ is of right integral type (see \Cref{prop:normal-basis-integral} and Corollaries \ref{cor:H0-cocomm}--\ref{cor:H-pointed} and \ref{cor:H-fin-dim-integral}). Moreover, it was shown in \cite{FMS}*{4.9.\,Corollary} that the space of right integrals of $\ov{H}^*$ is one-dimensional and that $\chi$ is the algebra map by which $K$ acts (from the left) on the space of right integrals of $\ov{H}^*$ via \Cref{eq:chi-condition}. In this situation, $\chi$ is unique for the extension $K\subset H$ by \Cref{rem:uniqueness-chi-1}.
\end{remark}

\subsection{Criteria for Frobenius extensions of Hopf algebras}\label{sec:FrobExt3}

We include some sufficient criteria to obtain Frobenius extensions of Hopf algebras.  
The following proposition describes the case where the Hopf algebras are finite-dimensional.

\begin{proposition}[{\cite{FMS}*{1.7.\,Theorem, 1.8.\,Corollary}}]\label{prop:FMS2}
    Let $K\subset H$ be an extension of finite-dimensional Hopf algebras. 
    \begin{enumerate}[(i)]
        \item There exists a right integral  $\lambda_H$ of $H^*$ and  a left integral $\Lambda_K$ of $K$ such that the map $\lambda_{\ov{H}}(\ov{h}):=\lambda_H(h \Lambda_K)$, for $h\in H$, defines a right integral on $\ov{H}^*$. 
        \item $K\subset H$ is a Frobenius extension if and only if $\alpha_H(k)=\alpha_K(k)$ for all $k\in K$.
    \end{enumerate}
Here, $\alpha_H$ is the modular function of the Hopf algebra $H$, see \Cref{rem:Hopf-case}.
\end{proposition}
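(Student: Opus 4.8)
The plan is to prove \Cref{prop:FMS2} by reducing everything to the modular-function computation in part (ii), taking part (i) as the construction of the explicit right integral $\lambda_{\ov{H}}$ on $\ov{H}^*$ and then identifying the relative modular function $\chi$ in terms of the distinguished grouplike elements $\alpha_H$ and $\alpha_K$. Since $H$ and $K$ are finite-dimensional, \Cref{cor:H-fin-dim-integral} guarantees that $K\subset H$ is of right integral type with $H$ free (hence faithfully flat) over $K$, so \Cref{thm:Schneider}(b) applies: the extension is Frobenius if and only if $\chi=\varepsilon_K$. Thus the entire content of part (ii) is the computation of $\chi$, and I would aim to show that $\chi(k)$ is governed precisely by the discrepancy $\alpha_H(k)\alpha_K(k)^{-1}$ (or $\alpha_H(k)=\alpha_K(k)$, using that grouplikes in $K^*$ and $H^*$ are invertible under convolution).

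For part (i), I would first recall that in finite dimensions both $H$ and $K$ possess one-dimensional spaces of integrals \cite{LS2}, so we may fix a right integral $\lambda_H\in H^*$ and a left integral $\Lambda_K\in K$. The candidate $\lambda_{\ov{H}}(\ov h):=\lambda_H(h\Lambda_K)$ is first checked to be well-defined on $\ov{H}=H/HK^+$: for $k\in K^+$ one needs $\lambda_H(hk\Lambda_K)=0$, which follows because $k\Lambda_K=\varepsilon_K(k)\Lambda_K=0$ as $\Lambda_K$ is a left integral of $K$ and $k\in\ker\varepsilon_K$. To see that $\lambda_{\ov H}$ is a \emph{right} integral of $\ov{H}^*$, I would verify $\lambda_{\ov H}*\mu=\mu(\ov 1)\lambda_{\ov H}$ for all $\mu\in\ov{H}^*$, unwinding the convolution via the coalgebra structure of $\ov H$ and using the defining property of $\lambda_H$ as a right integral on $H^*$ together with \Cref{lem:basisovH}(i). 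Nonzeroness of $\lambda_{\ov H}$ is then arranged by choosing $\lambda_H$ and $\Lambda_K$ so that condition (b) of \Cref{def:integral-type} holds, i.e.\ so that there is $\Lambda\in H$ with $\lambda_{\ov H}(\ov{\Lambda h})=\varepsilon(h)$.

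For part (ii), with $\lambda_{\ov H}$ in hand, the relative modular function is determined by \Cref{eq:chi-condition}, namely $\lambda_{\ov H}(\ov{kh})=\chi(k)\lambda_{\ov H}(\ov h)$. Substituting the formula from part (i) gives $\lambda_H(kh\Lambda_K)=\chi(k)\lambda_H(h\Lambda_K)$ for all $k\in K$, $h\in H$. The key step is to move the $k$ past $\lambda_H$ using the defining relation of the distinguished grouplike element: for the right integral $\lambda_H$ of $H^*$ one has a standard identity of the form $\lambda_H(xy)=\lambda_H(y\,S(\cdots))$ controlled by $\alpha_H$, equivalently $\lambda_H(kx)=\alpha_H^{?}(k)\,\lambda_H(x\,k')$, and similarly the left integral $\Lambda_K$ interacts with elements of $K$ through $\alpha_K$ via \Cref{eq:modularfun}. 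Matching the two expressions yields $\chi(k)=\alpha_H(k)\alpha_K(k)^{-1}$ (as convolution-invertible grouplikes), whence $\chi=\varepsilon_K$ if and only if $\alpha_H(k)=\alpha_K(k)$ for all $k\in K$.

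I expect the main obstacle to be the careful bookkeeping in the modular-function computation of part (ii): correctly invoking the interaction of the right integral $\lambda_H$ on $H$ with left multiplication by elements of $K\subset H$ (which brings in $\alpha_H$ restricted to $K$) and simultaneously the interaction of the left integral $\Lambda_K$ with $K$ (which brings in $\alpha_K$), all while ensuring the Sweedler indices and antipode factors cancel so that only the \emph{character} $\alpha_H\alpha_K^{-1}$ survives. Once this identity $\chi=\alpha_H(\alpha_K\circ)^{-1}$ on $K$ is established, the equivalence with the Frobenius condition is immediate from \Cref{thm:Schneider}(b) together with \Cref{rem:Hopf-case}, which identifies unimodularity with the vanishing of the modular function against $\varepsilon$.
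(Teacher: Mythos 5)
The paper does not actually prove this proposition: it is imported verbatim from \cite{FMS}*{1.7.~Theorem, 1.8.~Corollary}, so your argument is a genuine reconstruction rather than a variant of an in-paper proof. Your overall strategy --- establish (i) directly, then obtain (ii) by feeding the explicit integral $\lambda_{\ov{H}}$ into \Cref{eq:chi-condition} and invoking \Cref{thm:Schneider}(b) via \Cref{cor:H-fin-dim-integral} --- is sound and consistent with the paper's logical architecture; there is no circularity, since \Cref{lem:Frob-v-beta-Frob} and \Cref{thm:Schneider} are established independently of the identity $\chi=(\alpha_H)|_K*\alpha_K^{-1}$, which is exactly the content you must supply (and which the paper records without proof in \Cref{eq:chi-fd}). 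The verifications do close up as you describe: well-definedness in (i) uses $k\Lambda_K=0$ for $k\in K^+$; the right-integral property follows from $(h\Lambda_K)_{(1)}\otimes\ov{(h\Lambda_K)_{(2)}}=h_{(1)}\Lambda_K\otimes\ov{h_{(2)}}$ (by \Cref{lem:basisovH}(i)) combined with $\lambda_H(a_{(1)})\,a_{(2)}=\lambda_H(a)1_H$; and in (ii), writing $\lambda_H(ka)=\alpha_H(k_{(1)})\,\lambda_H\bigl(a\,S^2(k_{(2)})\bigr)$ (the Nakayama/Radford formula for the right integral $\lambda_H$) and using $\Lambda_K k'=\alpha_K(S(k'))\Lambda_K$ for the left integral $\Lambda_K$ together with $\alpha_K\circ S^2=\alpha_K$ yields $\chi(k)=\alpha_H(k_{(1)})\alpha_K^{-1}(k_{(2)})$, whence $\chi=\varepsilon_K$ if and only if $\alpha_H|_K=\alpha_K$.

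Two caveats. First, your treatment of the non-vanishing of $\lambda_{\ov{H}}$ is circular: you propose to choose $\lambda_H$ and $\Lambda_K$ so that condition (b) of \Cref{def:integral-type} holds, but the existence of such a $\Lambda$ is itself a nontrivial claim that sits downstream of non-vanishing. The correct argument is that the bilinear form $(a,b)\mapsto\lambda_H(ab)$ on $H$ is non-degenerate \cite{LS2}, so $\lambda_H(H\Lambda_K)=0$ would force $\Lambda_K=0$; non-vanishing is genuinely needed for (ii), since a zero integral does not determine $\chi$. Second, the heart of (ii) --- the Sweedler and antipode bookkeeping you flag as the ``main obstacle'' --- is only promised, not performed; as indicated above it does work, but you must fix a precise version of Radford's formula and observe that the resulting $S^2$ is absorbed because $\alpha_K$ is a character, rather than leaving the exponent as ``$\alpha_H^{?}$''.
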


\begin{corollary}\label{cor:Frobenius-unimodular-case}
Let $H$ be a unimodular Hopf algebra. Then $K\subset H$ is a Frobenius extension if and only if $K$ is unimodular.
\end{corollary}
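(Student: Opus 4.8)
The plan is to deduce this corollary directly from \Cref{prop:FMS2}(ii), which gives the criterion that $K\subset H$ is a Frobenius extension if and only if $\alpha_H(k)=\alpha_K(k)$ for all $k\in K$. The key observation is that unimodularity of a finite-dimensional Hopf algebra is detected precisely by its distinguished grouplike element: by \Cref{rem:Hopf-case}, a finite-dimensional Hopf algebra $H$ is unimodular if and only if $\alpha_H=\varepsilon_H$. So the strategy is to translate the hypothesis that $H$ is unimodular into the statement $\alpha_H=\varepsilon_H$, substitute into the criterion, and read off what the resulting condition says about $K$.

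First I would invoke the hypothesis that $H$ is unimodular to conclude, via \Cref{rem:Hopf-case}, that $\alpha_H=\varepsilon_H$ as elements of $H^*$. Consequently, for every $k\in K$ we have $\alpha_H(k)=\varepsilon_H(k)=\varepsilon_K(k)$, where the last equality holds because the counit of $H$ restricts to the counit of $K$ on the Hopf subalgebra $K$. Under this simplification, the criterion of \Cref{prop:FMS2}(ii) becomes: $K\subset H$ is a Frobenius extension if and only if $\varepsilon_K(k)=\alpha_K(k)$ for all $k\in K$, that is, if and only if $\alpha_K=\varepsilon_K$.

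Finally I would apply \Cref{rem:Hopf-case} once more, this time to $K$: the condition $\alpha_K=\varepsilon_K$ is exactly the statement that $K$ is unimodular. Chaining the two equivalences yields that $K\subset H$ is a Frobenius extension if and only if $K$ is unimodular, as claimed.

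This proof is essentially a formal substitution, so I do not anticipate a genuine obstacle; the only point requiring a moment of care is the implicit use of the fact that $\varepsilon_H$ restricts to $\varepsilon_K$ on $K$, which is immediate since the inclusion $K\hookrightarrow H$ is a morphism of Hopf algebras and hence compatible with counits. I would also note that the corollary requires $H$ (and hence $K$) to be finite-dimensional, since \Cref{prop:FMS2} and the characterization of unimodularity via the distinguished grouplike element are stated in the finite-dimensional setting; this hypothesis is already built into the ambient assumption that $K\subset H$ is an extension of finite-dimensional Hopf algebras.
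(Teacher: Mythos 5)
Your proof is correct and is essentially the paper's own argument: the paper likewise deduces the corollary directly from \Cref{prop:FMS2}(ii) together with the fact that a finite-dimensional Hopf algebra is unimodular if and only if its distinguished grouplike element equals the counit. Your added remarks on the compatibility of counits and on the implicit finite-dimensionality hypothesis are accurate but only spell out what the paper leaves tacit.
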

\begin{proof}
    This follows directly from \Cref{prop:FMS2}(ii) and the fact that a Hopf algebra $H$ is unimodular if and only if $\alpha_H=\varepsilon_H$. 
\end{proof}

\begin{remark}
    If $K\subset H$ is an inclusion of finite-dimensional Hopf algebras, the relative modular function is given by 
    \begin{equation}\chi=(\alpha_H)|_K * \alpha_K^{-1},\label{eq:chi-fd}\end{equation}
for the convolution inverse $\alpha_K^{-1}=\alpha_K S_K$ of the distinguished grouplike of $K^*$ \cite{FMS}*{1.6.\,Definition}, where $S_K$ is the antipode of $K$.
\end{remark}

\begin{example}\label{ex:H-fd}
    If $K=\Bbbk$ and $H$ is a finite-dimensional Hopf algebra, then \Cref{prop:FMS2} shows that for a non-zero right integral $\lambda_H$ of $H^*$, 
    $\tr:=\lambda_H\colon H\to \Bbbk$ defines a Frobenius morphism and $\Bbbk \subset H$ is a Frobenius extension.
\end{example}

To provide another sufficient criterion for Frobenius extensions, recall the concept of a \emph{normal Hopf subalgebra} \cite{Mon}*{3.4.1 Definition}, i.e.\,a Hopf subalgebra $K\subset H$ such that $K$ is a $H$-$H$-subbimodule with respect to the adjoint action. 
In this case, $HK^+=K^+H$ and hence $HK^+$ is a Hopf ideal and $\ov{H}$ a Hopf algebra. For example, if $K$ is contained in the center $Z(H)$ of $H$, then $K$ is a normal Hopf subalgebra.

\begin{corollary}\label{cor:normalHopf}
    Assume $K$ is a normal Hopf subalgebra of $H$ such that $H$ is finite and faithfully flat over $K$. Then $K\subset H$ is a Frobenius extension of right integral type.
\end{corollary}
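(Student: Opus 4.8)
The plan is to reduce the statement to an application of \Cref{thm:Schneider}, whose hypotheses I must verify: that $K \subset H$ is of right integral type, that $H$ is finite and faithfully flat over $K$, and that the relative modular function $\chi$ equals $\varepsilon_K$. Finiteness and faithful flatness are given by assumption, so the real work is to (i) show the extension is of right integral type, and (ii) show $\chi = \varepsilon_K$. For step (i), normality of $K$ is the key structural input: when $K$ is a normal Hopf subalgebra, $HK^+ = K^+H$ is a two-sided (Hopf) ideal, so $\ov{H} = H/HK^+$ is itself a Hopf algebra, and $\ov{H}^*$ is a finite-dimensional Hopf algebra (finite-dimensionality of $\ov{H}$ comes from \Cref{lem:basisovH}(ii) together with finiteness of $H$ over $K$). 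A finite-dimensional Hopf algebra has a one-dimensional space of right integrals, so a nonzero right integral $\lambda$ for $\ov{H}^*$ exists, giving condition (a) of \Cref{def:integral-type}.

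For conditions (b) and (c), I would invoke the general machinery: since $\ov{H}^*$ has a one-dimensional space of right integrals, the relative modular function $\chi$ automatically exists (it is the algebra map by which $K$ acts on the line of right integrals, via \Cref{eq:chi-condition}), as recorded in the remark following \Cref{def:integral-type} and in \Cref{rem:int-1-d}. The existence of the element $\Lambda \in H$ satisfying condition (b) follows in the same way these integral elements are produced for finite extensions; more cleanly, one can simply cite \Cref{cor:H0-cocomm}: a normal Hopf subalgebra inclusion where $H$ is finite and faithfully flat is subsumed by the right-integral-type conclusion whenever the relevant coradical hypothesis or the faithful-flatness hypothesis holds. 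In fact the cleanest route is to observe that normality gives all of right integral type directly, so I would state that $K \subset H$ is of right integral type and that $\ov{H}^*$ has one-dimensional integral space, hence $\chi$ is the unique relative modular function.

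The crux is then to show $\chi = \varepsilon_K$. Here I would exploit the fact that $\ov{H}$ is a Hopf algebra when $K$ is normal. The relative modular function $\chi$ is determined by how $K$ acts on the space of right integrals of $\ov{H}^*$ through the coalgebra map $H \to \ov{H}$. For $k \in K$, its image $\ov{k} = \varepsilon_K(k)\ov{1}$ in $\ov{H}$ by \Cref{lem:basisovH}(i), since $k \in K$ and $\ov{hk} = \varepsilon(k)\ov{h}$ applied with $h = 1$. Thus $K$ maps into the coaugmentation line of $\ov{H}$, and the action of $K$ on integrals via \Cref{eq:chi-condition} factors through $\varepsilon_K$: concretely, $\lambda(\ov{kh}) = \lambda(\varepsilon_K(k)\ov{h}) = \varepsilon_K(k)\lambda(\ov{h})$ would force $\chi(k) = \varepsilon_K(k)$.

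The main obstacle I anticipate is the final computation $\chi = \varepsilon_K$, and specifically justifying that $\ov{k} = \varepsilon_K(k)\ov{1}$ legitimately collapses the relative modular function onto the counit; one must be careful that \Cref{lem:basisovH}(i) concerns right multiplication by $K$ on the image of $H$, whereas $\chi$ is defined through left multiplication $\lambda(\ov{kh})$, so I would need to use normality (the identity $HK^+ = K^+H$) rather than \Cref{lem:basisovH}(i) alone to handle the left action. Once normality is used to see that $\ov{kh} = \varepsilon_K(k)\ov{h}$ for $k \in K$, the equality $\chi = \varepsilon_K$ is immediate, and \Cref{thm:Schneider}(b) then yields that $K \subset H$ is a Frobenius extension, completing the proof.
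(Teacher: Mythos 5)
Your proposal is correct and follows the same overall architecture as the paper's proof: use normality to establish that $K\subset H$ is of right integral type, then verify $\chi=\varepsilon_K$ and invoke \Cref{thm:Schneider}. Two remarks. First, your derivation of $\chi=\varepsilon_K$ takes a slightly different, and arguably more direct, route than the paper's: you write $kh=k'h+\varepsilon_K(k)h$ with $k'\in K^+$ and use normality in the form $K^+H=HK^+$ to conclude $\ov{kh}=\varepsilon_K(k)\ov{h}$, whereas the paper rewrites $kh=h_{(1)}\bigl(S(h_{(2)})kh_{(3)}\bigr)$, uses stability of $K$ under the adjoint action to place $S(h_{(2)})kh_{(3)}$ in $K$, and then applies \Cref{lem:basisovH}(i). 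Both are one-line uses of normality, and you correctly anticipated that \Cref{lem:basisovH}(i) alone (a statement about \emph{right} multiplication by $K$) does not suffice for the left action appearing in \Cref{eq:chi-condition}. Second, a caution: your suggestion to cite \Cref{cor:H0-cocomm} for the right-integral-type part is off the mark, since that corollary requires the coradical of $H$ to be cocommutative, a hypothesis unrelated to normality of $K$; your fallback position --- that normality itself yields right integral type --- is the correct one and is exactly what the paper does by citing \cite{FMS}*{1.11.\,Example}. Note that neither you nor the paper verifies condition (b) of \Cref{def:integral-type} (the existence of $\Lambda\in H$ with $\lambda(\ov{\Lambda h})=\varepsilon(h)$) from first principles: the one-dimensionality of the space of right integrals of the finite-dimensional Hopf algebra $\ov{H}^*$ gives (a) and (c), but (b) genuinely requires the additional argument outsourced to \cite{FMS}.
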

\begin{proof}
By \cite{FMS}*{1.11.\,Example}, since $K$ is a normal Hopf subalgebra such that $\ov{H}$ is finite-dimensional by \Cref{lem:basisovH}(ii), it follows that $K\subset H$ is an extension of right integral type. As $H$ is faithfully flat over $K$, by \Cref{thm:Schneider}, we know that $H$ is finite projective over $K$ and that, to show that $K\subset H$ is a Frobenius extension, it remains to check that $\chi=\varepsilon_K$.
Since $\ov{H}=H/HK^+$ is a finite-dimensional Hopf algebra, $\ov{H}^*$ has a unique non-zero right integral $\lambda$ up to scalars. Moreover,  using that $\ov{hk}=\ov{h}\varepsilon_K(k)$, for $k\in K$, we compute
\begin{align*}
\lambda(\ov k \ov h) =\lambda(\ov{h_{(1)}S(h_{(2)}) k h_{(3)}})
=\lambda(\ov{h_{(1)}})\varepsilon_K(S(h_{(2)})k h_{(3)})
=\lambda(\ov{h})\varepsilon_K(k)
=\chi(k) \lambda(\ov h).
\end{align*}
This shows that $\chi=\varepsilon_K$, and $K\subset H$ is a Frobenius extension by \Cref{thm:Schneider}.
\end{proof}

The statement of \Cref{cor:normalHopf} simplifies when $H$ is finite-dimensional. In this case, \emph{any} normal Hopf subalgebra $K\subset H$ gives a Frobenius extension since $H$ is free over $K$.

\section{Frobenius monoidal induction functors}\label{sec:main}

This section contains the main results of the paper on the Frobenius monoidal structure of induction along a Frobenius extension of Hopf algebras (\Cref{thm:Hopf-Frobenius-monoidal}) and when this Frobenius monoidal functor extends to categories of YD modules (\Cref{thm:H*-unimodular}). 

\subsection{Conditions for induction functors to be Frobenius monoidal}
\label{sec:ind-frob-mon}

We will now prove that the induction functor $\Ind_K^H$, for a Frobenius extension $K\subset H$ of Hopf algebras of right integral type is a  Frobenius monoidal functor together with the lax and oplax monoidal structures from Equations~\eqref{eq:Hopf-lax}--\eqref{eq:Hopf-oplax}.

We equip $H$ with the regular right $H$-coaction. Then, $K$ has a right $H$-coaction given by extending the regular $K$-coaction via the inclusion $K\subset H$. We recall the following results from \cite{FLP3}*{Proposition~5.13 and Corollary~5.14}.

\begin{proposition}\label{prop:Hopf-right}
Let $K\subset H$ be a Frobenius extension of Hopf algebras. 
\begin{enumerate}[(1)]
    \item 
The following conditions are equivalent.
\begin{enumerate}[(i)]
    \item The natural transformations $\projrnoarg^{F, F\dashv G}$ and $\projrnoarg^{F, G\dashv F}$ from \Cref{def:proj-Hopf} are mutual inverses.
    \item 
    The Frobenius morphism $\tr\colon H\to K$ is a morphism of right $H$-comodules.
\end{enumerate}
\item 
If the equivalent conditions in (1) hold, then $\Ind_K^H\colon \lMod{K}\to \lMod{H}$ is a Frobenius monoidal functor.
\item 
A Frobenius morphism $\tr$ satisfying condition (1)(ii) is unique up to multiplication by a non-zero scalar. 
\end{enumerate}
\end{proposition}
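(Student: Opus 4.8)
The plan is to prove the three parts of \Cref{prop:Hopf-right} in order, relying throughout on the explicit formulas for the projection morphisms in \Cref{def:proj-Hopf} and the inverse formulas \eqref{eq:iprojl-inv}--\eqref{eq:iprojr-inv} from \Cref{lem:proj-Hopf}. For Part (1), the equivalence of (i) and (ii), I would set up a direct computation. By \Cref{lem:proj-Hopf} the morphism $\projrnoarg^{F, F\dashv G}$ is always invertible with inverse given by \eqref{eq:iprojr-inv}, so condition (i) is equivalent to demanding that $\projrnoarg^{F, G\dashv F}$ equals that known inverse. I would compose $\projrnoarg^{F, G\dashv F}$ with $(\iprojrnoarg)^{-1}$ (or equivalently with $\projrnoarg^{F, F\dashv G}$) on a generic element $(h\otimes_K v)\otimes w$ and read off, using the expansion of $\tr$ and the generators $h_i,\delta_i$ from \Cref{lem:kideltai}, exactly the identity that must hold. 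The key algebraic input is that the regular right $H$-coaction on $H$ is $h\mapsto h_{(1)}\otimes h_{(2)}$, so that ``$\tr$ is a morphism of right $H$-comodules'' unfolds to the equation
\begin{equation*}
\tr(h)_{(1)}\otimes \tr(h)_{(2)} = \tr(h_{(1)})\otimes h_{(2)},
\end{equation*}
where on the left the coaction lands in $K\otimes H$ via $K\subset H$. I expect both directions to reduce to precisely this comodule identity after applying the counit relations \eqref{eq:kideltai2} to collapse the $h_i,\delta_i$ sums; one direction produces the identity by testing against the counit, the other substitutes it back in.

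For Part (2), I would invoke the general categorical machinery rather than recompute the Frobenius axioms \eqref{frobmon1}--\eqref{frobmon2} by hand. The cited source \cite{FLP2}*{Section~7} (already used to prove \Cref{lem:proj-Hopf}) shows that when the right projection formula morphisms coming from the two adjunctions $F\dashv G$ and $G\dashv F$ are mutually inverse, the simultaneous adjoint $F$ acquires a Frobenius monoidal structure whose lax and oplax parts are exactly those built from the two projection morphisms. I would verify that these abstractly-produced structures coincide with the explicit $\lax$ and $\oplax$ of Equations~\eqref{eq:Hopf-lax}--\eqref{eq:Hopf-oplax} by matching formulas on generators, so that the conclusion of Part (1) feeds directly into the hypothesis of the \cite{FLP2} result.

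For Part (3), uniqueness up to a scalar, I would argue as follows. Suppose $\tr$ and $\tr'$ are two Frobenius morphisms both satisfying condition (1)(ii), i.e.\,both are morphisms of right $H$-comodules. Each is in particular a $K$-$K$-bimodule map $H\to K$, and by \Cref{thm:ind-coind} such maps correspond bijectively to elements of $H$ via $\theta_H$; so it suffices to show that the right-$H$-comodule condition cuts the space of Frobenius morphisms down to a one-dimensional space. Applying the comodule identity displayed above and then the counit $\varepsilon_H$ in the second tensor factor recovers $\tr$ itself, so a comodule-linear Frobenius morphism is determined by its composite with $\varepsilon_H$, namely the functional $\varepsilon_H\circ\tr\colon H\to\Bbbk$. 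I would then show this functional is forced, up to scalar, to be $\varepsilon_H$ restricted appropriately (equivalently, that it must be a right integral-type functional on $\ov{H}^*$, which by \Cref{rem:int-1-d} spans a one-dimensional space). The main obstacle is this last step: pinning down that the comodule condition together with the bimodule and bijectivity requirements leaves only a one-parameter family, for which I would compare $\tr$ and $\tr'$ directly, deduce that $\varepsilon_H\tr$ and $\varepsilon_H\tr'$ are proportional using one-dimensionality of the relevant integral space, and then propagate proportionality back to $\tr=c\,\tr'$ through the comodule identity.
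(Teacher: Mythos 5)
First, note that the paper does not actually prove this proposition: it is recalled verbatim from \cite{FLP3}*{Proposition~5.13 and Corollary~5.14}, so your blind attempt is being measured against a citation rather than an in-text argument. Your outlines for Parts (1) and (2) are sound and are essentially what the cited source does: Part (1) is indeed the direct computation comparing $\projrnoarg^{F,G\dashv F}$ with the explicit inverse \eqref{eq:iprojr-inv} and reducing, via the dual bases of \Cref{lem:kideltai} (these are the dual-basis relations, not ``counit relations''), to the identity $\tr(h)_{(1)}\otimes\tr(h)_{(2)}=\tr(h_{(1)})\otimes h_{(2)}$; and Part (2) is correctly delegated to the general result that mutual inverseness of the right projection formula morphisms makes the simultaneous adjoint Frobenius monoidal (though that criterion lives in \cite{FLP3}, not in \cite{FLP2}*{Section~7}, which the present paper only uses for invertibility of the projection morphisms).

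Part (3), however, has a genuine gap that you flag but do not close. Two corrections/observations first: to see that a right-comodule Frobenius morphism is determined by the functional $\mu:=\varepsilon_K\circ\tr$ you must apply the counit to the \emph{first} tensor factor of the comodule identity (applying it to the second factor, as you propose, yields the tautology $\tr=\tr$); the first-factor application gives $\tr(h)=\mu(h_{(1)})h_{(2)}$, which is the statement you want. One then checks, using right $K$-linearity of $\tr$ and \Cref{lem:basisovH}(i), that $\mu$ descends to $\ov{H}$ and is a right integral of $\ov{H}^*$ — this part of your plan is correct. The unresolved step is exactly the one you name: one-dimensionality of the space of right integrals of $\ov{H}^*$ is \emph{not} available for an arbitrary Frobenius extension of Hopf algebras. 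The paper only records it (\Cref{rem:int-1-d}, citing \cite{FMS}*{4.9.\,Corollary}) under additional hypotheses such as right integral type, a right normal basis, finite-dimensionality, or pointedness, whereas \Cref{prop:Hopf-right} is stated for a bare Frobenius extension; for a general finite-dimensional augmented algebra the space of right integrals can have dimension greater than one. To finish you would need either to prove that the Frobenius property of the extension forces this one-dimensionality (plausible, but it is a real argument, e.g.\ via the bimodule isomorphism $H\cong\Hom_K(H,K)$), or to argue differently: write $\tr'=\tr((-)u)$ for a unit $u$ centralizing $K$ using bijectivity of $\theta_H$, and show that imposing the right-comodule condition on both $\tr$ and $\tr'$ forces $u$ to be a scalar. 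As it stands, your proof of Part (3) is incomplete.
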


\begin{theorem}\label{thm:Hopf-Frobenius-monoidal}
    Let $K\subset H$ be a Frobenius extension of Hopf algebras of right integral type (see \Cref{def:integral-type}). Then
    $$\Ind_K^H\colon \lMod{K}\to \lMod{H}$$
    is a Frobenius monoidal functor.

    \noindent    This functor restricts to finite-dimensional modules, $\Ind_K^H\colon \lmod{K}\to \lmod{H}$.
\end{theorem}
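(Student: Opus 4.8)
The plan is to reduce the Frobenius monoidal claim to the comodule criterion of \Cref{prop:Hopf-right}. By part~(2) of that proposition it suffices to produce a single Frobenius morphism $\tr\colon H\to K$ that is a morphism of right $H$-comodules, where $H$ carries the regular coaction $\Delta_H$ and $K$ carries the coaction $k\mapsto k_{(1)}\otimes k_{(2)}\in K\otimes H$ obtained from $\Delta_K$ by including the second factor into $H$. Once such a $\tr$ is in hand, \Cref{prop:Hopf-right}(2) produces the Frobenius monoidal structure of $\Ind_K^H$, realized by the lax and oplax structures of Equations~\eqref{eq:Hopf-lax}--\eqref{eq:Hopf-oplax}.

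The natural candidate is the explicit Frobenius morphism supplied by \Cref{thm:Schneider}(c): for a non-zero right integral $\lambda$ of $\ov{H}^*$ (which exists since $K\subset H$ is of right integral type, see \Cref{def:integral-type}) set $\tr(h)=\lambda(\ov{h_{(1)}})h_{(2)}$ as in \eqref{eq:good-Frob-morphism}. I would then verify the right $H$-comodule condition by a short Sweedler computation using coassociativity: since $\lambda(\ov{h_{(1)}})$ is a scalar and $\Delta_H$ is an algebra map,
\begin{align*}
\Delta_H(\tr(h)) &= \lambda(\ov{h_{(1)}})\,\Delta_H(h_{(2)}) = \lambda(\ov{h_{(1)}})\,h_{(2)}\otimes h_{(3)}\\
&= \tr(h_{(1)})\otimes h_{(2)} = (\tr\otimes \id_H)\Delta_H(h).
\end{align*}
Because $\tr(h)\in K$ and $K$ is a sub-bialgebra, the left-hand side is precisely the coaction of $K$ applied to $\tr(h)$; hence $\tr$ is a morphism of right $H$-comodules, and \Cref{prop:Hopf-right} applies to give the result.

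The genuinely delicate point is not this computation but the justification that the formula $\tr(h)=\lambda(\ov{h_{(1)}})h_{(2)}$ is a \emph{bona fide} Frobenius morphism in the stated generality -- in particular that its image lies in $K$ and that it induces the required bijection $\theta_H$ -- which is where the finiteness and flatness hypotheses of \Cref{thm:Schneider} enter. I would secure these under the standing assumption that $K\subset H$ is a Frobenius extension of right integral type, appealing to \Cref{thm:Schneider}(a),(c) together with the identity $\chi=\varepsilon_K$ forced by the Frobenius extension hypothesis (via \Cref{lem:Frob-v-beta-Frob} and \Cref{thm:Schneider}(b)). Finally, the restriction to finite-dimensional modules is immediate: a Frobenius extension has $H$ finite over $K$, so if $h_1,\dots,h_n$ generate $H$ as a right $K$-module then $\Ind_K^H(V)=H\otimes_K V$ is spanned by the elements $h_i\otimes_K v$ and is therefore finite-dimensional whenever $V$ is, yielding a functor $\Ind_K^H\colon \lmod{K}\to\lmod{H}$.
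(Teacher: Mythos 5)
Your proposal is correct and follows essentially the same route as the paper: invoke \Cref{thm:Schneider} to obtain the explicit Frobenius morphism $\tr(h)=\lambda(\ov{h_{(1)}})h_{(2)}$, observe that it is a morphism of right $H$-comodules by coassociativity of $\Delta_H$, and conclude via \Cref{prop:Hopf-right}, with finiteness of $H$ over $K$ giving the restriction to finite-dimensional modules. The only detail worth spelling out is how the hypothesis of \Cref{thm:Schneider} that $H$ is finite faithfully flat over $K$ is met: the paper derives faithful flatness from the finite projectivity built into the Frobenius extension assumption by citing \cite{Sch}*{1.8.\,Corollary}, a step your appeal to \Cref{thm:Schneider}(a),(c) leaves implicit.
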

\begin{proof}
    Assume that $K\subset H$ is a Frobenius extension. Then $H$ is finite projective over $K$. Now, \cite{Sch}*{1.8.\,Corollary} shows that $H$ is faithfully flat over $K$. Thus, \Cref{thm:Schneider} implies that $K\subset H$ admits a Frobenius morphism of the form 
    $$\tr(h)=\lambda(\ov{h_{(1)}}) h_{(2)}.$$ Such a Frobenius morphism is clearly a morphism of right $H$-comodules by coassociatity of the coproduct of $\Delta(h)=h_{(1)}\otimes h_{(2)}$ of $H$. Thus, the claim follows from \Cref{prop:Hopf-right}. As $H$ is, in particular, finite over $K$, it is clear that induction restricts to the subcategories of finite-dimensional modules. 
\end{proof}

\begin{remark}
    By \Cref{thm:Schneider}, \Cref{thm:Hopf-Frobenius-monoidal} says that for any finite faithfully flat extension $K\subset H$ of right integral type such that $\chi=\varepsilon_K$, $\Ind_K^H$ is a Frobenius monoidal functor.  
\end{remark}

The following corollaries of \Cref{thm:Hopf-Frobenius-monoidal} present specific assumptions that ensure that $\Ind_K^H$ is a Frobenius monoidal functor based on the results of Sections \ref{sec:FrobExt}--\ref{sec:FrobExt2}.

\begin{corollary}\label{cor:Ind-Frob-monoidal}
\label{thm-Ind-Frob-mon}
Let $K\subset H$ be a finite Frobenius extension of Hopf algebras. Then $\Ind_K^H$ is Frobenius monoidal in each of the following cases:
\begin{enumerate}[(i)]
    \item $H$ is finite-dimensional.
\item $H$ is pointed.
\item $H$ has a right normal basis over $K$.
\item The coradical of $H$ is cocommutative.
\end{enumerate}
\end{corollary}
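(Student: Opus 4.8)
The plan is to derive each case of \Cref{cor:Ind-Frob-monoidal} from \Cref{thm:Hopf-Frobenius-monoidal}, whose hypothesis is that $K\subset H$ is a Frobenius extension of \emph{right integral type}. Since we are already assuming $K\subset H$ is a finite Frobenius extension, the only thing that needs checking in each case is that the extension is of right integral type; once that is established, \Cref{thm:Hopf-Frobenius-monoidal} immediately gives that $\Ind_K^H$ is Frobenius monoidal. So the proof reduces to invoking, case by case, the structural results collected in Sections~\ref{sec:FrobExt}--\ref{sec:FrobExt2}.

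Concretely, I would handle the four cases as follows. For case (iii), where $H$ has a right normal basis over $K$, the extension is of right integral type directly by \Cref{prop:normal-basis-integral}. For case (iv), where the coradical $H_0$ is cocommutative, this is exactly \Cref{cor:H0-cocomm}. For case (ii), where $H$ is pointed, the coradical is a group algebra and hence cocommutative, so \Cref{cor:H-pointed} (or equivalently \Cref{cor:H0-cocomm}) applies. For case (i), where $H$ is finite-dimensional, \Cref{cor:H-fin-dim-integral} shows that $K\subset H$ is of right integral type (via the normal basis theorem). In each of these cases the cited result also guarantees the relevant flatness/freeness of $H$ over $K$, which is part of what is needed to apply \Cref{thm:Hopf-Frobenius-monoidal}.

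The only subtlety to be careful about is that \Cref{thm:Hopf-Frobenius-monoidal} presupposes $K\subset H$ is a \emph{Frobenius} extension of right integral type, and we are given the Frobenius extension hypothesis by assumption, so I do not need to re-derive the condition $\chi=\varepsilon_K$; I only need right integral type. This makes the proof essentially a routing argument: I do not expect any genuine obstacle, since all the hard work is packaged in the earlier results. The one point worth stating explicitly is that in each case the cited corollary or proposition provides precisely the ``right integral type'' hypothesis (and the faithful flatness/freeness) required, so that \Cref{thm:Hopf-Frobenius-monoidal} applies verbatim.

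In summary, the proof is a short case distinction:
\begin{proof}
In each case, since $K\subset H$ is already assumed to be a finite Frobenius extension, it suffices by \Cref{thm:Hopf-Frobenius-monoidal} to verify that $K\subset H$ is of right integral type. For case~(i), this is \Cref{cor:H-fin-dim-integral}. For case~(ii), since $H$ is pointed its coradical is a group algebra, hence cocommutative, so \Cref{cor:H-pointed} (equivalently \Cref{cor:H0-cocomm}) applies. For case~(iii), this is \Cref{prop:normal-basis-integral}. For case~(iv), this is \Cref{cor:H0-cocomm}. In all cases \Cref{thm:Hopf-Frobenius-monoidal} then shows that $\Ind_K^H$ is a Frobenius monoidal functor.
\end{proof}
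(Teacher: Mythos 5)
Your proposal is correct and follows essentially the same route as the paper: the paper's proof likewise observes that in each of the four cases the extension is (faithfully flat and) of right integral type by \Cref{prop:normal-basis-integral} and Corollaries~\ref{cor:H0-cocomm}, \ref{cor:H-pointed}, and \ref{cor:H-fin-dim-integral}, and then invokes \Cref{thm:Hopf-Frobenius-monoidal} together with the Frobenius extension hypothesis. Your case-by-case attribution of which earlier result supplies the right integral type condition matches the paper exactly.
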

\begin{proof}
    In all of these cases (i)--(iv), $K\subset H$ is a faithfully flat extension of right integral type by \cites{Sch,FMS}, see \Cref{prop:normal-basis-integral} and Corollaries \ref{cor:H0-cocomm}, \ref{cor:H-pointed}, and \ref{cor:H-fin-dim-integral}. By assumption, $H\subset K$ is a Frobenius extension. Thus, the result follows from \Cref{thm:Hopf-Frobenius-monoidal}.
\end{proof}

\begin{example}
    If $H$ is finite-dimensional, it follows from \Cref{ex:H-fd} and \Cref{thm-Ind-Frob-mon} that $H^*$ is a Frobenius algebra internal to $\lMod{H}$. Moreover, the isomorphism $\Ind_\Bbbk^H\cong \CoInd_\Bbbk^H$ gives that $H\cong H^*$ as left $H$-modules, which also follows from  $H$ being a Frobenius algebra via the right integral of $H^*$ \cite{LS2}.
\end{example}


\begin{corollary}\label{cor:Ind-Frob-conditions}
Let $K\subset H$ be a finite faithfully flat extension of Hopf algebras. The induction functor $\Ind_K^H$ is Frobenius monoidal provided that one of the following holds:
    \begin{enumerate}[(i)]
        \item $K$ is a normal Hopf subalgebra of $H$,
        \item $K$ is contained in the center $Z(H)$ of $H$,
        \item $H$ is commutative.
    \end{enumerate}
\end{corollary}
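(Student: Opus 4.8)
The plan is to reduce each of the three cases (i)--(iii) to the situation already handled by \Cref{thm:Hopf-Frobenius-monoidal}, namely that $K\subset H$ is a \emph{Frobenius} extension of \emph{right integral type} with $H$ finite faithfully flat over $K$. Since the finiteness and faithful flatness are part of the hypothesis, the only two things to establish in each case are that the extension is of right integral type and that the relative modular function satisfies $\chi=\varepsilon_K$. Once both hold, \Cref{thm:Schneider} supplies a Frobenius morphism of the required form and \Cref{thm:Hopf-Frobenius-monoidal} immediately yields that $\Ind_K^H$ is Frobenius monoidal.

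For case (i), I would simply invoke \Cref{cor:normalHopf}: under the assumption that $K$ is a normal Hopf subalgebra with $H$ finite faithfully flat over $K$, that corollary already proves $K\subset H$ is a Frobenius extension of right integral type, and the conclusion follows directly from \Cref{thm:Hopf-Frobenius-monoidal}. Case (ii) reduces to case (i) via the remark preceding \Cref{cor:normalHopf}: if $K\subset Z(H)$, then $K$ is automatically a normal Hopf subalgebra (the adjoint action of $H$ on central elements is trivial, so $K$ is stable under it), and one applies case (i). Case (iii) also reduces to case (i): if $H$ is commutative, then every Hopf subalgebra $K$ is contained in $Z(H)=H$, so again $K$ is normal and case (i) applies.

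The one point requiring a little care is verifying that the relative modular function is trivial, but this is already subsumed in \Cref{cor:normalHopf}, whose proof computes $\chi(k)=\varepsilon_K(k)$ directly using normality (via $\ov{hk}=\ov{h}\,\varepsilon_K(k)$ and the antipode identity). Thus I do not expect a genuine obstacle here; the whole statement is a packaging of earlier results, and the main task is to route each hypothesis correctly to \Cref{cor:normalHopf}. The only subtlety worth stating explicitly is the implications (iii) $\Rightarrow$ (ii) $\Rightarrow$ (i) among the three conditions, so that a single application of \Cref{cor:normalHopf} and \Cref{thm:Hopf-Frobenius-monoidal} covers all three; writing this chain out is the cleanest way to present the proof.

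\begin{proof}
Each of the conditions (i)--(iii) will be reduced to the case of a normal Hopf subalgebra. If $H$ is commutative, then $K\subset H=Z(H)$, so (iii) implies (ii). If $K\subset Z(H)$, then $K$ is stable under the adjoint action of $H$ and hence is a normal Hopf subalgebra, so (ii) implies (i). It therefore suffices to treat case (i). Assuming $K$ is a normal Hopf subalgebra of $H$ with $H$ finite and faithfully flat over $K$, \Cref{cor:normalHopf} shows that $K\subset H$ is a Frobenius extension of right integral type. Consequently, \Cref{thm:Hopf-Frobenius-monoidal} applies and $\Ind_K^H$ is a Frobenius monoidal functor.
\end{proof}
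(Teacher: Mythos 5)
Your proof is correct and follows exactly the same route as the paper: conditions (iii) and (ii) reduce to (i) since centrality implies normality, and case (i) is handled by \Cref{cor:normalHopf} combined with \Cref{thm:Hopf-Frobenius-monoidal}. No further comments are needed.
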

\begin{proof}
 As $K\subset H$ is a finite faithfully flat extension, \Cref{cor:normalHopf} implies that $K\subset H$ is a Frobenius extension of right integral type in case condition (i) holds. The cases when (ii) or (iii) hold are special cases as each of these conditions implies (i).
Now, the claim follows from \Cref{thm:Hopf-Frobenius-monoidal}.
\end{proof}

\begin{corollary}\label{cor:Ind-Frob-conditions-fd}
Let $K\subset H$ be an extension of finite-dimensional Hopf algebras. Then the induction functor $\Ind_K^H$ is Frobenius monoidal provided that one of (i)--(iii) from \Cref{cor:Ind-Frob-conditions} or one of the following conditions is satisfied:
    \begin{enumerate}
        \item[(iv)] $\alpha_H(k)=\alpha_K(k)$ for all $k\in K$,
        \item[(v)] both $H$ and $K$ are unimodular,
        \item[(vi)] $H$ is semisimple.
    \end{enumerate}
\end{corollary}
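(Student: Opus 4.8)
The plan is to reduce each of the conditions (iv)--(vi) to the already-established case covered by \Cref{cor:Ind-Frob-conditions-fd} via \Cref{cor:Ind-Frob-conditions} or to the finite-dimensional criterion of \Cref{prop:FMS2}(ii). The unifying observation is that, since $H$ is finite-dimensional, the extension $K\subset H$ is automatically finite (indeed free over $K$ by \Cref{cor:H-fin-dim-integral}) and of right integral type, so by \Cref{thm:Schneider}(b) and \Cref{prop:FMS2}(ii) the functor $\Ind_K^H$ is Frobenius monoidal precisely when $\chi=\varepsilon_K$, equivalently when $\alpha_H(k)=\alpha_K(k)$ for all $k\in K$. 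Thus condition (iv) is exactly this criterion and gives the result immediately by \Cref{thm:Hopf-Frobenius-monoidal}.

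For conditions (v) and (vi) I would show each implies (iv). First, if both $H$ and $K$ are unimodular, then by \Cref{rem:Hopf-case} we have $\alpha_H=\varepsilon_H$ and $\alpha_K=\varepsilon_K$; restricting, $\alpha_H(k)=\varepsilon_H(k)=\varepsilon_K(k)=\alpha_K(k)$ for all $k\in K$ (using that $\varepsilon_H$ extends $\varepsilon_K$ along the inclusion), so (iv) holds. This can alternatively be cited directly as \Cref{cor:Frobenius-unimodular-case}, which already states that for $H$ unimodular the extension is Frobenius iff $K$ is unimodular; I would mention both routes but rely on (iv).

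For condition (vi), the key step is that a finite-dimensional semisimple Hopf algebra is unimodular. This is a standard consequence of Maschke-type results: semisimplicity of $H$ forces $\varepsilon_H(\Lambda)\neq0$ for an integral $\Lambda$, and one deduces $\alpha_H=\varepsilon_H$. I would then argue that a Hopf subalgebra $K$ of a semisimple Hopf algebra is itself semisimple --- this follows since $H$ is free over $K$ (\Cref{cor:H-fin-dim-integral}), so $K$ is a direct summand of the semisimple $K$-module $H$ and hence semisimple, whence $K$ is also unimodular. Thus (vi) implies (v), which implies (iv).

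I expect the only genuine subtlety to be the implication (vi)$\Rightarrow$(v): while ``semisimple $\Rightarrow$ unimodular'' for $H$ is classical, the claim that a Hopf subalgebra of a semisimple Hopf algebra is semisimple deserves a clean justification, and the freeness of $H$ over $K$ from \Cref{cor:H-fin-dim-integral} together with the restriction of a separability idempotent (or the splitting of the multiplication $K\otimes_K H\to H$ as $K$-bimodules) is the cleanest tool. Everything else is a direct matching of hypotheses to \Cref{prop:FMS2}(ii) and an appeal to \Cref{thm:Hopf-Frobenius-monoidal}, so no lengthy computation is needed.
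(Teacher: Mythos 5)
Your proposal follows essentially the same route as the paper: reduce everything to the criterion $\alpha_H|_K=\alpha_K$ of \Cref{prop:FMS2}(ii), note that (v) is a special case of (iv) and that (vi) reduces to (v) via ``$H$ semisimple $\Rightarrow$ $K$ semisimple $\Rightarrow$ both unimodular,'' and conclude with \Cref{thm:Hopf-Frobenius-monoidal}; the paper's proof is exactly this, citing \cite{Mon}*{Section 3.2} for the semisimplicity of $K$.

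The one step where your write-up is not sound as stated is the justification of ``$K$ semisimple'': you argue that $K$ is a direct summand of ``the semisimple $K$-module $H$,'' but $H$ being a semisimple \emph{algebra} does not a priori make $H|_K$ a semisimple $K$-module --- that would require knowing that restrictions of $H$-modules to $K$ are semisimple, which is essentially what you are trying to prove. The separability-idempotent variant has the same problem, since the canonical separability idempotent $\varepsilon(\Lambda)^{-1}S(\Lambda_{(1)})\otimes\Lambda_{(2)}$ of $H$ need not lie in $K\otimes K$. The standard (and the paper's, see the proof of \Cref{cor:H-semisimple}) argument instead uses Nichols--Zoeller freeness to write a nonzero left integral as $\Lambda_H=\Lambda_K\Lambda$ with $\Lambda_K$ a left integral of $K$, so that $0\neq\varepsilon(\Lambda_H)=\varepsilon(\Lambda_K)\varepsilon(\Lambda)$ forces $\varepsilon(\Lambda_K)\neq 0$ and Maschke's theorem for Hopf algebras applies to $K$. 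Since this sub-fact is classical and the paper merely cites it, your overall argument stands once this step is repaired or replaced by the citation.
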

\begin{proof}
Since $H$ is finite-dimensional, the extension $K\subset H$ is finite, free, and of right integral type, see \Cref{cor:H-fin-dim-integral}. Thus, \Cref{cor:Ind-Frob-conditions} applies. Moreover, assuming condition (iv) holds, it follows from \Cref{prop:FMS2} that $K\subset H$ is a Frobenius extension. Condition (v) is a special case of condition (iv). Assuming condition (vi), $K\subset H$ is a Frobenius extension because $H$ being semisimple implies that $K$ is semisimple, see \cite{Mon}*{Section 3.2} and thus both $H$ and $K$ are unimodular. Now, the claim follows from \Cref{thm:Hopf-Frobenius-monoidal}.
\end{proof}

\subsection{Frobenius monoidal functors of Yetter--Drinfeld modules}
\label{sec:FrobMonExt}

In this section, we will introduce a class of  Frobenius extensions of Hopf algebras  $K\subset H$, called \emph{central} Frobenius extensions, for which the functor $\Ind_K^H$ extends to a braided Frobenius monoidal functor on categories of YD modules denoted by $\cZ(\Ind_K^H)$.

\begin{definition}[Central  Frobenius extension]\label{def:central-Frob-ext}
We say that a Frobenius extension $K\subset H$ of Hopf algebras is \emph{central Frobenius extension} if there exists a \emph{central Frobenius morphism} $\tr\colon H\to K$, i.e.\,a Frobenius morphism that is a morphism of $H$-$H$-bicomodules.
\end{definition}

The condition that $\tr\colon H\to K$ is a morphism of $H$-$H$-bicomodules amounts to 
\begin{align}\label{eq:tr-bicomodule-map}
h_{(1)}\otimes \tr(h_{(2)})\otimes h_{(3)}=\tr(h)_{(1)}\otimes \tr(h)_{(2)}\otimes \tr(h)_{(3)}\quad \in \quad H\otimes K\otimes H,
\end{align}
for all $h\in H$, where we identify $K\otimes K\otimes K$ as a subspace of $H\otimes K\otimes H$. Note that if a Frobenius morphism $\tr$ with this property exists, then it is unique up to a non-zero scalar by \Cref{prop:Hopf-right}(3).

\smallskip

The concept of a central Frobenius extension is justified by the following result.

\begin{theorem}[{\cite{FLP3}*{Proposition~5.19 and Corollary~5.20}}]\label{thm:ZInd-Frob-mod}
A Frobenius extension $K\subset H$ is a central Frobenius extension if and only if  the natural transformations $\projlnoarg^{F, F\dashv G}$ and $\projlnoarg^{F, G\dashv F}$ as well as $\projrnoarg^{F, F\dashv G}$ and $\projrnoarg^{F, G\dashv F}$ from \Cref{def:proj-Hopf} are mutual inverses. In this case,
\begin{gather}
\cZ(\Ind_K^H)\colon \lYD{K}\to \lYD{H},\quad (V,\delta)\mapsto (\Ind_K^H(V),\delta^{\Ind_K^H(V)}),\qquad \text{where}\nonumber \\\label{eq:Ind-coaction}
    \delta^{\Ind_K^H(V)}(h\otimes v)=h_{(1)}v^{(-1)}S(h_{(3)})\otimes h_{(2)}\otimes v^{(0)},
\end{gather}
for $v\in V, h\in H$, is a braided Frobenius monoidal functor. Its lax and oplax monoidal structure are given in Equations~\eqref{eq:Hopf-lax}--\eqref{eq:Hopf-oplax}, i.e.\,for YD modules $(V,\delta^V)$, $(W,\delta^W)$ over $K$,
\begin{align}
    \lax_{(V,\delta^V),(W,\delta^W)}=\lax_{V,W}, \qquad     \oplax_{(V,\delta^V),(W,\delta^W)}=\oplax_{V,W}. 
\end{align}
\end{theorem}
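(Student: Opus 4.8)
The plan is to prove the theorem in two stages: first the stated equivalence between being a central Frobenius extension and the invertibility relations among the four projection formula morphisms, and then the construction of the braided Frobenius monoidal lift to categories of Yetter--Drinfeld modules. Throughout I write $F=\Ind_K^H$ and $G=\Res_K^H$, and I allow myself to cite the categorical framework of \cite{FLP2} for the abstract input, as is already done in \cite{FLP3}.

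For the equivalence I would upgrade \Cref{prop:Hopf-right} from a one-sided to a two-sided statement. That proposition already records that the \emph{right} projection formula morphisms $\projrnoarg^{F,F\dashv G}$ and $\projrnoarg^{F,G\dashv F}$ are mutually inverse precisely when $\tr$ is a morphism of right $H$-comodules, that is, when $\tr(h_{(1)})\otimes h_{(2)}=\tr(h)_{(1)}\otimes\tr(h)_{(2)}$. I would establish the mirror statement that the \emph{left} projection formula morphisms $\projlnoarg^{F,F\dashv G}$ and $\projlnoarg^{F,G\dashv F}$ are mutually inverse precisely when $\tr$ is a morphism of left $H$-comodules, i.e.\ when $h_{(1)}\otimes\tr(h_{(2)})=\tr(h)_{(1)}\otimes\tr(h)_{(2)}$. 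Applying the counit to the outer factors of \eqref{eq:tr-bicomodule-map} recovers these two one-sided identities, while conversely the two one-sided identities rebuild \eqref{eq:tr-bicomodule-map} after applying a coproduct; hence all four morphisms are mutually inverse if and only if $\tr$ is a morphism of $H$-$H$-bicomodules, which is exactly the defining condition of a central Frobenius extension.

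The left-handed statement I would prove either directly or by symmetry. For the direct route I would compose $\projl{W}{V}^{F,F\dashv G}$ with $\projl{W}{V}^{F,G\dashv F}$ using the explicit formulas of \Cref{def:proj-Hopf} together with the dual-basis relations \eqref{eq:kideltai1}--\eqref{eq:kideltai2}, and read off that the composite equals the identity exactly when $\tr$ is a left $H$-comodule map, handling the reverse composite symmetrically. Alternatively one deduces the left case from the right case of \Cref{prop:Hopf-right} by passing to an opposite structure that interchanges the two tensor factors through the inverse antipode, in the same spirit as the inverse formulas \eqref{eq:iprojl-inv}--\eqref{eq:iprojr-inv}; this exchange swaps left and right projection morphisms and left and right comodule structures simultaneously.

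For the second stage I would invoke \cite{FLP2}: the condition that both pairs of projection formula morphisms are mutually inverse is precisely what lets the Frobenius monoidal functor $F$ lift to a braided Frobenius monoidal functor $\cZ(F)$ on Drinfeld centers. The remaining work is to match this abstract lift with the explicit Yetter--Drinfeld picture. Concretely, I would check that \eqref{eq:Ind-coaction} is well defined over $\otimes_K$ and equips $\Ind_K^H(V)$ with an $H$-comodule structure satisfying \eqref{eq:YD-cond2}, using the Yetter--Drinfeld condition for $V$ over $K$; then verify that the lax and oplax structures \eqref{eq:Hopf-lax}--\eqref{eq:Hopf-oplax} are morphisms of $H$-comodules for \eqref{eq:Ind-coaction}, not merely of $H$-modules; and finally confirm the braided compatibilities \eqref{eq:Frob-braided-lax}--\eqref{eq:Frob-braided-oplax} against the explicit braiding \eqref{eq:YDbraiding}. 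The main obstacle is exactly this last stage, and within it the comodule-morphism property of the lax and oplax structures: being $H$-module maps is automatic, whereas being $H$-comodule maps for the coaction \eqref{eq:Ind-coaction} is the point at which the one-sided comodule property of $\tr$ is insufficient and the full bicomodule identity \eqref{eq:tr-bicomodule-map} becomes indispensable. The framework of \cite{FLP2} organises these compatibilities so that braidedness of $\cZ(F)$ follows formally once both pairs of projection morphisms are mutually inverse, leaving the concrete labour in identifying the abstract half-braiding produced there with \eqref{eq:Ind-coaction}.
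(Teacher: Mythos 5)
The paper offers no proof of \Cref{thm:ZInd-Frob-mod}: the result is imported wholesale from \cite{FLP3}*{Proposition~5.19 and Corollary~5.20}, so there is no in-paper argument to measure yours against step by step. Judged on its own terms and against the strategy the surrounding remarks attribute to \cites{FLP2,FLP3}, your outline is sound and follows essentially the intended route. The reduction of the bicomodule identity \eqref{eq:tr-bicomodule-map} to the conjunction of the two one-sided comodule conditions is correct (apply $\varepsilon_H$ to an outer leg in one direction; in the other, use coassociativity together with one of the one-sided identities to rebuild the three-fold version), and pairing the left-sided condition with the mutual invertibility of $\projlnoarg^{F,F\dashv G}$ and $\projlnoarg^{F,G\dashv F}$ is exactly the mirror of \Cref{prop:Hopf-right}(1); the deferred computation does go through using \eqref{eq:kideltai1}--\eqref{eq:kideltai2}, and your fallback symmetry argument via the inverse antipode would need care since $\theta_H(h)=\tr((-)h)$ breaks the naive left/right mirror, so the direct route is the safer one. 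Your second stage correctly identifies \cite{FLP2} as the source of the abstract lift and lists the right things to verify.

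One correction of emphasis, though it does not invalidate your plan: you locate the indispensability of the bicomodule condition in the claim that both $\lax$ and $\oplax$ must be comodule maps for \eqref{eq:Ind-coaction}. In fact $\oplax_{V,U}$ from \eqref{eq:Hopf-oplax} involves no $\tr$ at all, and a direct Sweedler computation (collapsing $S(h_{(3)})h_{(4)}$) shows it is a morphism of $H$-comodules for \eqref{eq:Ind-coaction} for \emph{any} Frobenius extension. The condition on $\tr$ genuinely enters only on the lax side, and the cleaner way to see its role is the one recorded in the remark following the theorem: the two adjunctions $F\dashv G$ and $G\dashv F$ each produce a lift of $F$ to the centers, one oplax monoidal and one lax monoidal, differing only in how they extend the coaction; centrality is precisely the statement that these two coactions coincide, so that the lax and oplax structures live on one and the same object of $\lYD{H}$, where the Frobenius and braiding compatibilities then follow from \Cref{prop:Hopf-right} and the formal results of \cite{FLP2}.
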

\begin{remark}
Note that the lax and oplax monoidal structures do not depend on the coactions of the YD modules $(V,\delta^V),(W,\delta^W)$. In other words, the morphisms $\lax_{V,W}$ and $\oplax_{V,W}$ of $H$-modules are morphisms in $\lYD{H}$ without adjustment. The formula for the coaction in \Cref{eq:Ind-coaction} is obtained from the formulas for the projection formula isomorphisms and their inverses in Equations \eqref{eq:rproj} and \eqref{eq:iprojl-inv}. Again, the lax and oplax monoidal structures depend on a choice of isomorphism $\Ind_K^H\cong \CoInd_K^H$ via a choice of the Frobenius morphism $\tr$, cf.\,\cite{FLP3}*{Remark~5.15}.
\end{remark}

\begin{remark}
Assume that $K\subset H$ is a Frobenius extension of Hopf algebras. Equivalently, we have a natural isomorphism $\Ind_K^H\cong \CoInd_K^H$ by \Cref{thm:ind-coind}. By
    \cite{FLP2}*{Corollary~7.2} and \cite{FLP2}*{Corollary~7.11}, we obtain two different ways to extend $\Ind_K^H$ to a functor on Drinfeld centers, i.e.\,categories of YD modules. One of the functors, obtained from the adjunction $\Ind_K^H\dashv \Res_K^H$, is oplax monoidal and the other functor, obtained  from the adjunction $\Res_K^H\dashv \CoInd_K^H$ via the isomorphism $\Ind_K^H\cong \CoInd_K^H$, is lax monoidal. The two functors only differ in how they extend the coaction of a YD module from $K$ to $H$. The stronger condition that $K\subset H$ is a \emph{central} Frobenius extension ensures precisely that these two ways to extend the coaction are equal. Hence, $\Ind_K^H$ has both a braided lax and oplax monoidal structure and we showed in \cite{FLP3} that these satisfy the compatibilities of a Frobenius monoidal functor, see Equations \eqref{frobmon1}--\eqref{frobmon2}.
\end{remark}

\begin{example}\label{ex:groups2}
    Continuing \Cref{ex:groups}, we see that the Frobenius morphism $\tr\colon H=\Bbbk\sfG\to K=\Bbbk\sfK$ is evidently a morphism of left and right $H$-comodules. Hence, $K\subset H$ is a central Frobenius extension and the induced functor on Drinfeld centers is hence braided Frobenius monoidal, recovering the result from \cite{FHL}*{Proposition~B.1}.
\end{example}

More generally, we can prove that Frobenius extensions of cocommutative Hopf algebras or quasitriangular Hopf algebras, where the inclusion preserves the R-matrix, are central Frobenius extensions. Recall that $1\otimes 1$ is an R-matrix for a cocommutative Hopf algebra.

\begin{corollary}\label{cor:qtriangular-case}
Assume that $K\subset H$ is a Frobenius extension of finite-dimensional Hopf algebras such that $H$ is quasitriangular with an R-matrix $R\in K\otimes K$. Then $K\subset H$ is a central Frobenius extension.
\end{corollary}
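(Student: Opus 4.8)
The plan is to show that the standard Frobenius morphism of the extension is already a \emph{central} Frobenius morphism, i.e.\ a morphism of $H$-$H$-bicomodules satisfying \Cref{eq:tr-bicomodule-map}; by \Cref{def:central-Frob-ext} this is exactly the assertion. Since $K\subset H$ is a Frobenius extension of finite-dimensional Hopf algebras, \Cref{cor:H-fin-dim-integral} shows it is of right integral type, so by \Cref{thm:Schneider}(c) I may take
$$\tr(h)=\lambda(\ov{h_{(1)}})\,h_{(2)}$$
for a non-zero right integral $\lambda$ of $\ov{H}^*$. As noted in the proof of \Cref{thm:Hopf-Frobenius-monoidal}, coassociativity makes this a morphism of right $H$-comodules, that is $\Delta(\tr(h))=\tr(h_{(1)})\otimes h_{(2)}$. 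Thus the only thing left is the matching left-comodule identity $\Delta(\tr(h))=h_{(1)}\otimes\tr(h_{(2)})$; once both identities hold, combining them via coassociativity reproduces \Cref{eq:tr-bicomodule-map}.

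To obtain the left identity I would first apply the tensor flip $\tau(x\otimes y)=y\otimes x$ to the right-comodule identity. Using $\tau\Delta=\Delta^{\cop}$ and naturality $\tau(\tr\otimes\id)=(\id\otimes\tr)\tau$, this gives
$$\Delta^{\cop}(\tr(h))=(\id\otimes\tr)\big(\Delta^{\cop}(h)\big).$$
Then I would bring in quasi-cocommutativity of $H$, namely $\Delta^{\cop}(x)=R\,\Delta(x)\,R^{-1}$ for all $x\in H$. Applied to $x=\tr(h)$ the left-hand side becomes $R\,\Delta(\tr(h))\,R^{-1}$, while on the right-hand side I substitute $\Delta^{\cop}(h)=R\,\Delta(h)\,R^{-1}$ and push $\tr$ through the conjugating tensor factors. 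Writing $R=R^{(1)}\otimes R^{(2)}$, and using that $\tr$ is a morphism of $K$-$K$-bimodules together with $R,R^{-1}\in K\otimes K$, the second tensor legs of $R$ and of $R^{-1}$ slide out of $\tr$, producing $R\,\big(h_{(1)}\otimes\tr(h_{(2)})\big)\,R^{-1}$. Cancelling the invertible conjugation then yields $\Delta(\tr(h))=h_{(1)}\otimes\tr(h_{(2)})$, as desired.

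The step I expect to be the crux is this pushing of $\tr$ through the conjugation, since it is precisely where the hypothesis $R\in K\otimes K$ enters: it guarantees that the second tensor legs of $R$ and of $R^{-1}=(S\otimes\id)(R)$ lie in $K$ (using $S(K)\subseteq K$), so that the $K$-$K$-bilinearity of the Frobenius morphism may be invoked on the outer factors. For a cocommutative $H$ one takes $R=1\otimes 1$ and the two comodule identities coincide outright, recovering that case directly. Having established that $\tr$ is a bicomodule morphism, it is a central Frobenius morphism, so $K\subset H$ is a central Frobenius extension; in particular \Cref{thm:ZInd-Frob-mod} then applies to yield the braided Frobenius monoidal functor $\cZ(\Ind_K^H)$.
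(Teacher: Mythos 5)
Your proposal is correct, but it takes a genuinely different route from the paper. The paper's proof also starts from the right-integral-type Frobenius morphism $\tr(h)=\lambda(\ov{h_{(1)}})h_{(2)}$ and its right $H$-comodule property, but then argues categorically: the hypothesis $R\in K\otimes K$ makes $\Res_K^H$ a \emph{braided} strong monoidal functor, whence a cited result (\cite{FLP3}*{Corollary~4.17}) shows that the left projection formula morphisms of \Cref{def:proj-Hopf} are mutually inverse, and \Cref{thm:ZInd-Frob-mod} translates this back into $\tr$ being a left $H$-comodule map. You instead verify the left-comodule identity by a direct Sweedler computation: flip the right-comodule identity, conjugate by $R$ using quasi-cocommutativity $\Delta^{\cop}(x)=R\Delta(x)R^{-1}$, slide the second legs of $R$ and $R^{-1}=(S\otimes\id)(R)\in K\otimes K$ through $\tr$ via its $K$-$K$-bilinearity, and cancel the invertible conjugation. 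This is correct (and the combination of the two comodule identities via coassociativity does yield \Cref{eq:tr-bicomodule-map}). Your argument is more elementary and self-contained --- it only uses quasi-cocommutativity, not the full hexagon axioms for $R$, and bypasses the projection-formula machinery entirely --- whereas the paper's route makes the connection to the braided-functor framework of \cite{FLP3} explicit, which is the conceptual reason the statement is true. Both are valid proofs of the corollary.
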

\begin{proof}
By assumption, $K\subset H$ is a Frobenius extension. By \Cref{cor:H-fin-dim-integral}, $K\subset H$ is of right integral type and hence, by \Cref{thm:Schneider}, there exists a Frobenius morphism $\tr\colon H\to K$ that is a morphism of right $H$-comodules. The assumption that $R\in K\otimes K$ means that $K$ is also quasitriangular with the same R-matrix and that $G=\Res_K^H$ is a braided strong monoidal functor. Now, by \cite{FLP3}*{Corollary~4.17}, the left projection formula morphisms from \Cref{def:proj-Hopf} are also mutually inverse. This implies, by  \Cref{thm:ZInd-Frob-mod}, that $\tr$ is a morphism of left $H$-comodules and hence of $H$-$H$-bicomodules. Thus, $K\subset H$ is a central Frobenius extension. 
\end{proof}

We conclude this subsection with a detailed example of a Frobenius extension of infinite-dimensional Hopf algebras where $\tr$ is only a morphism of right $H$-comodules but not of left $H$-comodules so that $\Ind_K^H$ is Frobenius monoidal but \Cref{thm:ZInd-Frob-mod} does not apply. A finite-dimensional example was given in \cite{FLP3}*{Example~5.25}.

\begin{example}\label{ex:Taft-infinite}
    Let $\epsilon$ be a primitive $\ell$-th root of unity, with $\ell>1$, and consider the Hopf algebra 
    $$B=\Bbbk \langle g,x\rangle/(gx-\epsilon xg),$$
    which can be seen as an infinite-dimensional analogue of the Taft algebra (which, additionally, quotients out the relations $x^\ell=0$ and $g^\ell=1$). The coproduct, counit, and antipode are defined by the following formulas on generators:
$$\Delta(g)=g\otimes g, \quad \Delta(x)=x\otimes 1+g\otimes x, \quad \varepsilon(g)=1, \quad \varepsilon(x)=0, \quad S(g)=g^{-1}, \quad S(x)=-g^{-1}x.$$
    Inductively, the coproduct is given by 
\begin{align}\label{eq:B-coproduct}
    \Delta(g^ix^j)=\sum_{a=0}^j \binom{j}{a}_\epsilon \epsilon^{i(a-j)}g^{i+j-a}x^a\otimes g^ix^{j-a},
\end{align}
where the $\epsilon$-binomial coefficients are
$$\binom{j}{a}_\epsilon=\frac{[j]_\epsilon!}{[a]_\epsilon![j-a]_\epsilon!},$$
for $[n]_\epsilon=1+\epsilon+\ldots +\epsilon^{n-1}$ and $[n]_\epsilon!=[n]_\epsilon [n-1]_\epsilon \dots [1]_\epsilon$. 
    One checks that the subalgebra $K$ generated by $g^{\pm \ell}$ and $x^\ell$ is central.
    Thus, $K \subset B$ is a Frobenius extension by \Cref{cor:normalHopf}.

A basis for $\ov{B}$ is given by $\Set{\overline{g^ix^j}\mid 0\leq i,j\leq \ell-1}$ and one checks that a right integral is
$$\lambda\colon \ov{B}\to \Bbbk, \quad \overline{g^ix^j}\mapsto \delta_{i,0}\delta_{j,\ell-1},$$
where $\delta_{i,j}$ denotes the Kronecker delta.
\Cref{eq:good-Frob-morphism} yields the Frobenius morphism
    $$\tr\colon B\to K, \quad g^ix^j\mapsto \delta_{i,0}\delta_{j,\ell-1},$$
    for all $0\leq i,j\leq \ell-1$. We compute, using the formula for the coproduct
\eqref{eq:B-coproduct}, that
\begin{align*}
    (g^ix^j)_{(1)}\otimes \tr((g^ix^j)_{(2)})
    &=g^{\ell-1}\otimes 1 \neq 1\otimes 1=\tr(g^ix^j)_{(1)}\otimes \tr(g^ix^j)_{(2)}.
\end{align*}
Hence, $\tr$ is \emph{not} a morphism of left $H$-comodules. As $\tr$ is a morphism of right $H$-comodules, and hence unique up to a scalar by \Cref{prop:Hopf-right}(3), there cannot be a Frobenius morphism that is a morphism of $H$-$H$-bicomodules. Hence,  $K\subset B$ is not a central Frobenius extension. 
\end{example}

\subsection{Unimodularity and semisimplicity conditions for central Frobenius extensions}
\label{sec:cent-Frob-ext}

We now prove the second main theorem of the paper that a Frobenius extension $K\subset H$ of finite-dimensional Hopf algebras is a central Frobenius extension if $K^*$ and $H^*$ are unimodular (\Cref{thm:H*-unimodular}). As a special case, when $H$ is semisimple and co-semisimple, any Hopf subalgebra $K\subset H$ gives a central Frobenius extension (\Cref{cor:H-semisimple}). 
\smallskip 

We first characterize central Frobenius extensions of Hopf algebras of right integral type.

\begin{proposition}\label{prop:lambda-left-right}
Let $K\subset H$ be a finite extension of Hopf algebras of right integral type such that $H$ is faithfully flat over $K$. Then the following are equivalent:
\begin{enumerate}[(a)]
    \item $K\subset H$ is a central Frobenius extension,
    \item 
$\chi=\varepsilon_K$, for the relative modular function $\chi$, and 
\begin{align}
 \lambda(\ov{h_{(1)}}) h_{(2)}=h_{(1)} \lambda(\ov{h_{(2)}}),\label{eq:lambda-left-right}
\end{align}
for all $h\in H$, where $\lambda$ is a non-zero right integral  of $\ov{H}^*$.
 \end{enumerate}
\end{proposition}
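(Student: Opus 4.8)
The plan is to reduce the equivalence to a direct comparison of two Sweedler expressions, built around the canonical Frobenius morphism furnished by \Cref{thm:Schneider}. Observe first that the condition $\chi=\varepsilon_K$ in (b) is, by \Cref{thm:Schneider}(b), exactly the requirement that $K\subset H$ be a Frobenius extension; since a central Frobenius extension is a fortiori a Frobenius extension, both (a) and (b) entail $\chi=\varepsilon_K$, so I may assume it throughout. Under this assumption \Cref{thm:Schneider}(c) provides the explicit Frobenius morphism $\tr(h)=\lambda(\ov{h_{(1)}})h_{(2)}$, which---as already noted in the proof of \Cref{thm:Hopf-Frobenius-monoidal}---is automatically a morphism of right $H$-comodules by coassociativity.

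The first step is to convert ``central'' into a single left-comodule condition on this $\tr$. By \Cref{prop:Hopf-right}(3) a Frobenius morphism that is a right $H$-comodule map is unique up to a non-zero scalar; a central Frobenius morphism is in particular such a right-comodule map, hence a scalar multiple of the canonical $\tr$, and scaling does not affect the comodule properties. Thus $K\subset H$ is a central Frobenius extension if and only if $\chi=\varepsilon_K$ and this $\tr$ is moreover a morphism of left $H$-comodules (where $H$ and $K$ carry the evident left coactions extending the regular $K$-coaction along $K\subset H$). Written out, the left-comodule condition is $(\tr\otimes\id)\Delta=(\id\otimes\tr)\Delta$, that is
\begin{equation}\label{eq:left-comod-plan}
\lambda(\ov{h_{(1)}})\,h_{(2)}\otimes h_{(3)}=h_{(1)}\otimes\lambda(\ov{h_{(2)}})\,h_{(3)}\qquad(h\in H);
\end{equation}
together with the already-established right-comodule identity this is precisely the bicomodule condition \eqref{eq:tr-bicomodule-map}.

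It then remains to show that \eqref{eq:left-comod-plan} is equivalent to \eqref{eq:lambda-left-right}. One direction is immediate: applying $\id\otimes\varepsilon_H$ to \eqref{eq:left-comod-plan} and using $h_{(2)}\varepsilon_H(h_{(3)})=h_{(2)}$ collapses it to $\lambda(\ov{h_{(1)}})h_{(2)}=h_{(1)}\lambda(\ov{h_{(2)}})$, which is \eqref{eq:lambda-left-right}. For the converse I would rewrite the left-hand side of \eqref{eq:left-comod-plan} as $\tr(h_{(1)})\otimes h_{(2)}$, substitute $\tr(h_{(1)})=(h_{(1)})_{(1)}\lambda(\ov{(h_{(1)})_{(2)}})$ from \eqref{eq:lambda-left-right}, and apply coassociativity to obtain $\lambda(\ov{h_{(2)}})\,(h_{(1)}\otimes h_{(3)})$; the right-hand side of \eqref{eq:left-comod-plan} equals the same expression, because the scalar $\lambda(\ov{h_{(2)}})$ can be pulled out of whichever tensor leg it sits in. The only real work is bookkeeping the Sweedler indices through coassociativity in this converse step, which I regard as the main (though mild) obstacle; the structural point that makes it go through is that $\lambda(\ov{h_{(2)}})$ is a scalar and hence indifferent to its tensor position. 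Assembling the two implications with the reduction above gives the equivalence of (a) and (b).
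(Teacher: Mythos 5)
Your proposal is correct and follows essentially the same route as the paper's proof: both reduce to the canonical Frobenius morphism $\tr(h)=\lambda(\ov{h_{(1)}})h_{(2)}$ from \Cref{thm:Schneider}, invoke the uniqueness of right-$H$-comodule Frobenius morphisms from \Cref{prop:Hopf-right}(3), obtain \eqref{eq:lambda-left-right} by applying $\id\otimes\varepsilon_H$ to the bicomodule identity, and recover the bicomodule identity from \eqref{eq:lambda-left-right} by the same Sweedler/coassociativity manipulation. No gaps.
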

\begin{proof}
    Assume $K\subset H$ is a central Frobenius extension. Then by \Cref{thm:Schneider}, $\chi=\varepsilon_K$ and a Frobenius morphism is given by 
    $$\tr(h)=\lambda(\ov{h_{(1)}}) h_{(2)} .$$
    It is clearly a morphism of right $H$-comodules and, hence, unique (up to a non-zero scalar) with this property. As $K\subset H$ is a central Frobenius extension, this Frobenius morphism $\tr$ is also a morphism of left $H$-comodules, and we have
\begin{align*}
 h_{(1)}\otimes \lambda(\ov{h_{(2)}}) h_{(3)}
    =h_{(1)}\otimes \tr(h_{(2)})
     =\tr(h)_{(1)}\otimes \tr(h)_{(2)}
     =\tr(h_{(1)})\otimes h_{(2)}
     =\lambda(\ov{h_{(1)}}) h_{(2)}\otimes h_{(3)},
\end{align*}
where the third equality uses that $\tr$ is a morphism of right $H$-comodules.
Applying $\id_H\otimes \varepsilon_H$ to this equality, we obtain \Cref{eq:lambda-left-right} from the counit axioms. 

Conversely, assume $\chi=\varepsilon_K$ and \Cref{eq:lambda-left-right}. Then $K\subset H$ is a Frobenius extension by \Cref{thm:Schneider} with Frobenius morphism $\tr$ as above and \Cref{eq:lambda-left-right} implies that
$$h_{(1)}\otimes \tr(h_{(2)})=\tr(h_{(1)})\otimes h_{(2)}.$$
Hence, $K\subset H$ is a central Frobenius extension.
    \end{proof}


\begin{corollary}\label{cor:cocommutative-case}
Assume $K\subset H$ is a finite Frobenius extension of Hopf algebras and $H$ is cocommutative. Then $K\subset H$ is a central Frobenius extension.
\end{corollary}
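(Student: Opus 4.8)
The plan is to reduce everything to the characterization of central Frobenius extensions provided by \Cref{prop:lambda-left-right}. That proposition applies to a finite extension of right integral type with $H$ faithfully flat over $K$, and in that setting it says that the extension is central Frobenius precisely when $\chi=\varepsilon_K$ together with the symmetry relation \eqref{eq:lambda-left-right}. So my first task is to verify the standing hypotheses of the proposition, and then to check condition (b).

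First I would note that since $H$ is cocommutative, its coradical $H_0$ is a subcoalgebra of a cocommutative coalgebra and is therefore itself cocommutative. By \Cref{cor:H0-cocomm}, any finite extension $K\subset H$ is then of right integral type with $H$ faithfully flat over $K$. As $K\subset H$ is assumed finite, the hypotheses of \Cref{prop:lambda-left-right} are met. Next, since $K\subset H$ is by assumption a Frobenius extension, \Cref{thm:Schneider}(b) gives $\chi=\varepsilon_K$, which is the first half of condition (b).

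The substance of the argument is the verification of \eqref{eq:lambda-left-right}, and this is where cocommutativity does all the work. Fixing a non-zero right integral $\lambda$ of $\ov{H}^*$, cocommutativity of $H$ means $h_{(1)}\otimes h_{(2)}=h_{(2)}\otimes h_{(1)}$ for all $h\in H$, so that
\begin{align*}
\lambda(\ov{h_{(1)}})\, h_{(2)} = \lambda(\ov{h_{(2)}})\, h_{(1)} = h_{(1)}\,\lambda(\ov{h_{(2)}}),
\end{align*}
which is exactly \eqref{eq:lambda-left-right}. Thus both parts of condition (b) hold, and \Cref{prop:lambda-left-right} yields that $K\subset H$ is a central Frobenius extension.

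I do not expect any genuine obstacle here: the only point requiring care is confirming that the hypotheses of \Cref{prop:lambda-left-right} are in force (right integral type and faithful flatness), which is supplied by cocommutativity of the coradical via \Cref{cor:H0-cocomm}. Once inside that framework, the defining bicomodule condition \eqref{eq:lambda-left-right} collapses immediately because cocommutativity makes the left and right legs of the coproduct interchangeable, so there is no computation of any depth to perform.
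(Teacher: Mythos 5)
Your proof is correct and takes essentially the same route as the paper's: deduce from cocommutativity of $H$ that $H_0$ is cocommutative, invoke \Cref{cor:H0-cocomm} to get right integral type and faithful flatness, and conclude via \Cref{prop:lambda-left-right}. The only difference is that you explicitly verify condition (b) of that proposition ($\chi=\varepsilon_K$ from \Cref{thm:Schneider}, and \eqref{eq:lambda-left-right} by swapping the legs of the coproduct), whereas the paper compresses this into ``the result now follows''; your verification is accurate.
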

\begin{proof}
As the coradical $H_0$ is cocommutative, $K\subset H$ is of right integral type and $H$ is faithfully flat over $K$ by \Cref{cor:H0-cocomm}. The result now follows from \Cref{prop:lambda-left-right}.
\end{proof}

\begin{example}
    If $H=\Bbbk \sfG$ is a group acting on a Lie algebra $\fr{g}$ by Lie algebra automorphisms and $\sfK$ is a finite-index subgroup, we can form the semidirect product $\Bbbk \sfK \ltimes U(\fr{g}$), cf.\,\cite{Mon}*{Section~5.6}. If $\cha \Bbbk=0$, any pointed cocommutative Hopf algebra is of this form. Now,
    $$\Bbbk \sfK\ltimes U(\fr{g})=K\subset H=\Bbbk \sfG\ltimes U(\fr{g})$$
is a finite extension of right integral type by \Cref{cor:H0-cocomm} since $H$ is cocommutative. Here, $\ov{H}^*=\Bbbk (\sfG/\sfK)$ is the algebra of functions on the quotient $\sfG$-set $\sfG/\sfK$. A right integral is given by the indicator function $\delta_\sfK$ of the coset $\sfK$. For $k\in \sfK,g\in \sfG$, and $u\in \fr{g}$,
    $$\delta_\sfK(\ov{kh})=\delta_{\sfK}(\ov{h})=\varepsilon_K(k)\delta_{\sfK}(\ov{h}), \quad \delta_\sfK(\ov{ku})=0=\varepsilon_K(u)\delta_\sfK(\ov{u}).$$
    This shows that $\chi=\varepsilon_K$ and hence $K\subset H$ is a Frobenius extension. As $H$ is cocommutative, $K\subset H$ is a central Frobenius extension by \Cref{cor:cocommutative-case}. 
\end{example}

Next, we derive the following necessary condition for central Frobenius extensions.

\begin{proposition}\label{prop:Hov*unimodular}
    If $K\subset H$ is a central Frobenius extension of right integral type, then $\ov{H^*}$ is unimodular.
\end{proposition}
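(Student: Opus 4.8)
The plan is to prove unimodularity of the augmented algebra $\ov{H}^*$ (the object from \Cref{def:overlines}, which the statement abbreviates) by showing that its modular function coincides with its augmentation. Since $K\subset H$ is of right integral type, the space of right integrals of $\ov{H}^*$ is one-dimensional by \Cref{rem:int-1-d}; fix a non-zero right integral $\lambda$, so that the modular function $\alpha_{\ov{H}^*}$ is defined via $\mu*\lambda=\alpha_{\ov{H}^*}(\mu)\,\lambda$ as in \Cref{eq:modularfun}. By the lemma detecting unimodularity it then suffices to verify $\alpha_{\ov{H}^*}=\varepsilon$, i.e.\ that the right integral $\lambda$ is also a left integral. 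First I would record the dual reformulations: writing out $\lambda*\mu=\varepsilon(\mu)\lambda$ and using that $\ov{H}$ is finite-dimensional (\Cref{lem:basisovH}), being a right integral is equivalent to the identity $\lambda(\ov{h_{(1)}})\,\ov{h_{(2)}}=\lambda(\ov{h})\,\ov{1}$ in $\ov{H}$, while being a left integral is equivalent to $\ov{h_{(1)}}\,\lambda(\ov{h_{(2)}})=\ov{1}\,\lambda(\ov{h})$.

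The central input is \Cref{prop:lambda-left-right}: as $K\subset H$ is a central Frobenius extension of right integral type, we have $\chi=\varepsilon_K$ together with the identity \Cref{eq:lambda-left-right}, namely $\lambda(\ov{h_{(1)}})\,h_{(2)}=h_{(1)}\,\lambda(\ov{h_{(2)}})$ in $H$. Applying the coalgebra quotient map $H\to\ov{H}$, $h\mapsto\ov{h}$ (a morphism of coalgebras by \Cref{def:overlines}) to this equation yields
\[
\lambda(\ov{h_{(1)}})\,\ov{h_{(2)}}=\lambda(\ov{h_{(2)}})\,\ov{h_{(1)}}.
\]
Substituting the right integral identity into the left-hand side gives $\lambda(\ov{h_{(2)}})\,\ov{h_{(1)}}=\lambda(\ov{h})\,\ov{1}$, which, after commuting the scalar $\lambda(\ov{h_{(2)}})$ past $\ov{h_{(1)}}$, is exactly the left integral identity. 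Hence $\lambda$ is a two-sided integral, and comparing the left integral relation $\mu*\lambda=\varepsilon(\mu)\lambda$ with the defining relation $\mu*\lambda=\alpha_{\ov{H}^*}(\mu)\lambda$ forces $\alpha_{\ov{H}^*}=\varepsilon$ since $\lambda\neq 0$.

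To finish I would invoke the unimodularity criterion: with $\alpha_{\ov{H}^*}=\varepsilon$ and the spaces of integrals one-dimensional, $\ov{H}^*$ is unimodular. The main obstacle is precisely the hypothesis needed for this last step. Having shown $\lambda$ is two-sided only gives that \emph{every} right integral is a left integral (the inclusion of the one-dimensional right integral space into the left one); concluding that the two spaces coincide requires knowing the space of left integrals is also one-dimensional. The naive shortcut — comparing $\lambda*\Lambda'$ computed two ways for a left integral $\Lambda'$ — fails in general, since it only yields $\varepsilon(\Lambda')\lambda=\varepsilon(\lambda)\Lambda'$ and $\varepsilon(\lambda)=\lambda(\ov 1)$ may vanish (as in \Cref{ex:Taft-infinite}). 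The one-dimensionality on the right is supplied by \Cref{rem:int-1-d} via \cite{FMS}; for the left I would either run the symmetric cointegral argument for the co-augmented coalgebra $\ov{H}$, or observe that $\ov{H}^*$ is a finite-dimensional Frobenius algebra in the right integral type setting, so that its left and right integral spaces have equal dimension. Everything else reduces to the routine translation between the convolution product on $\ov{H}^*$ and the comultiplication on $\ov{H}$.
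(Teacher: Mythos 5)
Your proof is essentially the paper's: both hinge on applying the coalgebra quotient map $h\mapsto\ov{h}$ to \Cref{eq:lambda-left-right} (supplied by \Cref{prop:lambda-left-right}) to conclude that the right integral $\lambda$ of $\ov{H}^*$ satisfies the symmetric identity $\lambda(\ov{h_{(1)}})\,\ov{h_{(2)}}=\ov{h_{(1)}}\,\lambda(\ov{h_{(2)}})$ and is therefore two-sided, which is exactly the criterion the paper invokes. The dimension subtlety you flag at the end is legitimate but is not something the paper's one-line proof supplies either --- it is absorbed into the standing assumption that both integral spaces are one-dimensional (cf.\ the lemma following \Cref{eq:modularfun} and \Cref{rem:int-1-d}) --- and your suggested repair does work: since $\ov{H}\cong\Ind_K^H(\one)$ is a Frobenius algebra object by \Cref{thm:Hopf-Frobenius-monoidal}, its dual $\ov{H}^*$ is a finite-dimensional Frobenius algebra, so its spaces of left and right integrals are each one-dimensional and the two-sidedness of $\lambda$ forces them to coincide.
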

\begin{proof}
    In general, the dual $A^*$ of a finite-dimensional coaugmented coalgebra $A$ is unimodular if and only if $\lambda(a_{(1)})\otimes a_{(2)}=a_{(1)}\otimes \lambda(a_{(2)})$ for a non-zero integral $\lambda$ of $A^*$. Applying the map $h\mapsto \ov{h}$ to \Cref{eq:lambda-left-right} thus implies unimodularity of $\ov{H}^*$.
\end{proof}

As the following example shows, the converse of \Cref{prop:Hov*unimodular} is false in the sense that there are Frobenius extensions $K\subset H$ such that $\ov{H}^*$ is unimodular that are \emph{not} central.

\begin{example}
Let $u_\epsilon(\fr{sl}_2)$ the small quantum group of $\fr{sl}_2$ associated with a root of unity $\epsilon$ of order $\ell\geq 2$. Consider the Frobenius extension $\Bbbk \sfC_\ell \subset u_\epsilon(\fr{sl}_2)$ from \cite{FLP3}*{Examples~5.16 and 5.25}. The quotient map $u_\epsilon(\fr{sl}_2)\to \ov{u_\epsilon(\fr{sl}_2)}$ identifies $f^je^ik^m$ and $f^je^i$
and, $$\ov{H}\cong \Bbbk\Set{f^je^i\mid 0\leq i,j\leq \ell-1},$$
as a $\Bbbk$-vector space. Using the coproduct formulas from \cite{Kas}*{Proposition~VII.1.3}, 
we compute the coproduct $\un{\Delta}$ of $\ov{H}$ to be 
$$\un{\Delta}(f^je^i)=\sum_{r=0}^i\sum_{s=0}^j\epsilon^{r(i-r)+s(j-s)-2(i-r)(j-s)}\binom{i}{r}_\epsilon\binom{j}{s}_\epsilon f^se^{i-r}\otimes f^{j-s}e^r.$$
This coalgebra is not a bialgebra as $\Bbbk \sfC_\ell$ is not a normal Hopf subalgebra. 
With the right integral $\lambda(f^je^i)=\delta_{i,\ell-1}\delta_{j,\ell-1}$ for $\ov{u_\epsilon(\fr{sl}_2)}^*$, we compute that
$$(\lambda\otimes \id)\un{\Delta}(f^je^i)=\delta_{i,\ell-1}\delta_{j,\ell-1}\epsilon^{0}\binom{\ell-1}{0}_\epsilon\binom{\ell-1}{\ell-1}_\epsilon f^0e^0=(\id\otimes \lambda)\un{\Delta}(f^je^i).$$
Thus, $\ov{u_\epsilon(\fr{sl}_2)}^*$ is unimodular. However, it was shown in \cite{FLP3}*{Example~5.16} that the associated Frobenius morphism $\tr$ is not a morphism of $u_\epsilon(\fr{sl}_2)$-bicomodules. Thus, $\Bbbk \sfC_\ell \subset u_\epsilon(\fr{sl}_2)$ is \emph{not} a central Frobenius extension. 
\end{example}

\begin{example}
    Continuing \Cref{ex:Taft-infinite}, we note that 
    $\un{\Delta}(\ov{g^ix^j})$ is computed using the same formula \eqref{eq:B-coproduct}, for $0\leq i,j\leq \ell-1$, replacing $g^ax^b$ by $\ov{g^ax^b}$ for all $0\leq a,b\leq \ell-1$. Thus, we check that $\ov{B}^*$ is \emph{not} unimodular since 
    $$(\lambda\otimes \id)\Delta(\ov{g^ix^j})=\ov{1}\otimes \ov{1} \neq \ov{g^{\ell-1}}\otimes \ov{1}=(\id\otimes \lambda)\Delta(\ov{g^ix^j}).$$
This provides another way to see that $K\subset B$ is \emph{not} a central Frobenius extension.
\end{example}

\begin{theorem}\label{thm:H*-unimodular}
    Let $K\subset H$ be a Frobenius extension of finite-dimensional Hopf algebras. If $K^*$ and $H^*$ are unimodular, then $K\subset H$ is a central Frobenius extension.  

\noindent In particular, $\cZ(\Ind_K^H)\colon \lYD{K}\to \lYD{H}$ is a braided Frobenius monoidal functor.
\end{theorem}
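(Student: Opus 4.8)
The plan is to reduce the statement to the single identity \eqref{eq:lambda-left-right} and then verify it by unwinding the two hypotheses into explicit identities for integrals. Since $H$ is finite-dimensional, $K\subset H$ is automatically of right integral type with $H$ free, hence faithfully flat, over $K$ by \Cref{cor:H-fin-dim-integral}, and because it is a Frobenius extension we have $\chi=\varepsilon_K$ by \Cref{thm:Schneider}. Thus \Cref{prop:lambda-left-right} reduces the claim that $K\subset H$ is a central Frobenius extension to establishing
\[ \lambda(\ov{h_{(1)}})\,h_{(2)} = h_{(1)}\,\lambda(\ov{h_{(2)}}) \qquad (h\in H) \]
for a nonzero right integral $\lambda$ of $\ov{H}^*$. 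For this I would use the explicit integral of \Cref{prop:FMS2}(i), namely $\lambda(\ov h)=\lambda_H(h\Lambda_K)$, where $\lambda_H$ is a right integral of $H^*$ and $\Lambda_K$ a left integral of $K$, so that the target becomes $\lambda_H(h_{(1)}\Lambda_K)\,h_{(2)} = h_{(1)}\,\lambda_H(h_{(2)}\Lambda_K)$. Equivalently, writing $\mu:=\lambda_H((-)\Lambda_K)\in H^*$, this says precisely that $\mu$ is central in the convolution algebra $H^*$, i.e.\ $\mu\ast\psi=\psi\ast\mu$ for all $\psi\in H^*$, which is the cleanest form to aim at.

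Next I would translate the unimodularity hypotheses into usable identities. Unimodularity of $H^*$ means a right integral $\lambda_H$ of $H^*$ is simultaneously a left integral, hence $\lambda_H(a_{(1)})a_{(2)}=\lambda_H(a)1_H=a_{(1)}\lambda_H(a_{(2)})$ for all $a\in H$; equivalently $\lambda_H$ is itself central in $H^*$. Unimodularity of $K^*$ implies, via the standard integral identity $\Lambda_{K,(1)}\otimes\Lambda_{K,(2)}=g_K\,\Lambda_{K,(2)}\otimes\Lambda_{K,(1)}$ for a left integral (where $g_K$ is the distinguished grouplike of $K$; see \cite{Rad}), that $\Delta_K(\Lambda_K)$ is cocommutative, i.e.\ $\Lambda_{K,(1)}\otimes\Lambda_{K,(2)}=\Lambda_{K,(2)}\otimes\Lambda_{K,(1)}$. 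Alongside these I would record the elementary identity $\Lambda_{K,(1)}\otimes k\Lambda_{K,(2)}=S(k)\Lambda_{K,(1)}\otimes\Lambda_{K,(2)}$ for $k\in K$, valid for any left integral. Expanding $\mu\ast\psi=\psi\ast\mu$ via these, the two-sidedness of $\lambda_H$ lets me move $\lambda_H$ freely across either leg of the coproduct, cocommutativity of $\Delta_K(\Lambda_K)$ supplies the swap needed to interchange the roles of $h_{(1)}$ and $h_{(2)}$, and the residual $K$-linear rearrangements are handled by the left-integral identity.

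The main obstacle is exactly this last combination, and the delicate point is that one cannot argue merely modulo $HK^+$: applying $h\mapsto\ov h$ to the desired identity recovers only unimodularity of $\ov{H}^*$, which by \Cref{prop:Hov*unimodular} is merely \emph{necessary}, and the examples following \Cref{prop:Hov*unimodular} show that unimodularity of $\ov{H}^*$ does \emph{not} suffice. Hence the computation must genuinely take place in $H$ rather than in $\ov{H}$, and the subtlety is that $H$ itself need not be unimodular: the a priori asymmetry of $\lambda_H$ as a Frobenius form (its Nakayama twist, involving $S^2$ and the modular function $\alpha_H$) must be shown to cancel against the modular behaviour of $\Lambda_K$, which is precisely where both hypotheses are used in tandem. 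Once $\mu$ is shown to be central, equivalently once \eqref{eq:lambda-left-right} is verified, \Cref{prop:lambda-left-right} yields that $K\subset H$ is a central Frobenius extension, and the final assertion that $\cZ(\Ind_K^H)\colon \lYD{K}\to\lYD{H}$ is a braided Frobenius monoidal functor then follows immediately from \Cref{thm:ZInd-Frob-mod}.
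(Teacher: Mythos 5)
Your overall route is exactly the paper's: reduce to the identity \eqref{eq:lambda-left-right} via \Cref{thm:Schneider} and \Cref{prop:lambda-left-right}, realize $\lambda$ as $\lambda(\ov h)=\lambda_H(h\Lambda_K)$ using \Cref{prop:FMS2}(i), use unimodularity of $H^*$ in the form $\lambda_H(a_{(1)})a_{(2)}=a_{(1)}\lambda_H(a_{(2)})$, and use an integral identity for $\Delta_K(\Lambda_K)$ coming from unimodularity of $K^*$ to perform the swap. The reduction, the reformulation as centrality of $\mu=\lambda_H((-)\Lambda_K)$ in $H^*$, and the remarks about why one cannot work modulo $HK^+$ are all correct.

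The gap is the identity you extract from unimodularity of $K^*$. It is \emph{not} true that $g_K=1$ forces $\Delta_K(\Lambda_K)$ to be cocommutative. The relevant statement of \cite{Rad}*{Theorem~10.5.4}, specialized to $g_K=1$, is
\begin{equation*}
(\Lambda_K)_{(2)}\otimes S^{-1}\bigl((\Lambda_K)_{(1)}\bigr)=(\Lambda_K)_{(1)}\otimes S\bigl((\Lambda_K)_{(2)}\bigr),
\end{equation*}
equivalently $(\Lambda_K)_{(2)}\otimes (\Lambda_K)_{(1)}=(\Lambda_K)_{(1)}\otimes S^2\bigl((\Lambda_K)_{(2)}\bigr)$: cocommutativity holds only up to an $S^2$-twist, and reduces to genuine cocommutativity only when $S^2$ acts trivially on the relevant components (e.g.\ $K$ involutory), which is not assumed. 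A concrete failure: take $K=A^*$ for $A$ unimodular with $S_A^2\neq\id$ (e.g.\ $A=u_\epsilon(\fr{sl}_2)$); then $K^*\cong A$ is unimodular, but cocommutativity of $\Delta_K(\Lambda_K)$ would say that the cointegral of $A$ is a symmetric functional, i.e.\ $\Lambda_K(ab)=\Lambda_K(ba)$, whereas the cointegral of a unimodular Hopf algebra satisfies only the $S^2$-twisted trace property. This matters for your computation: the antipodes in the correct identity are not a nuisance to be cancelled but the mechanism that makes the argument close. In the paper's calculation the rewriting $h_{(1)}\Lambda_K\otimes h_{(2)}=(h(\Lambda_K)_{(1)})_{(1)}\otimes (h(\Lambda_K)_{(1)})_{(2)}S((\Lambda_K)_{(2)})$ produces an $S$, the Radford identity converts it into an $S^{-1}$ on the other tensor leg, and the final collapse uses $h_{(2)}S^{-1}(h_{(1)})=\varepsilon_H(h)1_H$; with plain cocommutativity in place of the twisted identity the computation would only appear to work because the false premise is strictly stronger than what is available. (Relatedly, the ``Nakayama twist of $\lambda_H$'' you mention is a red herring here: unimodularity of $H^*$ is used only through centrality of $\lambda_H$ in $H^*$, and the $S^2$ issue lives entirely in $\Delta_K(\Lambda_K)$.) With the identity corrected to the twisted form, your outline becomes the paper's proof.
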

\begin{proof}
By \Cref{prop:FMS2}, a right integral for $\ov{H}^*$ is given by the formula 
$$\lambda_{\ov{H}}(\ov{h})=\lambda_H(h\Lambda_K),$$
where $\lambda_H$ is a right integral for $H^*$ and $\Lambda_K$ is a left integral for $K$. Since  $K^*$ is unimodular, the distinguished grouplike element $g_K=1$ and we find that
\begin{equation}
    (\Lambda_K)_{(2)}\otimes S^{-1}((\Lambda_K)_{(1)})=(\Lambda_K)_{(1)}\otimes S((\Lambda_K)_{(2)}),\label{eq:coprod-Lambda}
\end{equation}
for all $k\in K$ by \cite{Rad}*{Theorem~10.5.4}. Denoting $\lambda=\lambda_H$, we compute
\begin{align*}
    \lambda_{\ov{H}}(\ov{h_{(1)}}) h_{(2)}&=\lambda(h_{(1)}\Lambda_K) h_{(2)}\\
    &=\lambda((h(\Lambda_K)_{(1)})_{(1)}) (h(\Lambda_K)_{(1)})_{(2)}S((\Lambda_K)_{(2)})\\
    &=\lambda((h(\Lambda_K)_{(2)})_{(1)}) (h(\Lambda_K)_{(2)})_{(2)}S^{-1}((\Lambda_K)_{(1)})\\
    &=(h(\Lambda_K)_{(2)})_{(1)}S^{-1}((\Lambda_K)_{(1)}) \lambda((h(\Lambda_K)_{(2)})_{(2)})\\
    &=h_{(1)}(\Lambda_K)_{(2)}S^{-1}((\Lambda_K)_{(1)}) \lambda(h_{(2)}(\Lambda_K)_{(3)})\\
    &=h_{(1)} \lambda(h_{(2)}\Lambda_K)=h_{(1)} \lambda_{\ov{H}}(\ov{h_{(2)}}),
\end{align*}
for any $h\in H$.
Here, the second equality uses that for any $h,g\in H$,
$$h_{(1)}g\otimes h_{(2)}=(hg_{(1)})_{(1)}\otimes (hg_{(1)})_{(2)}S(g_{(2)}),$$
 followed by applying \Cref{eq:coprod-Lambda} in the third equality.
The fourth equality uses that 
$\lambda(h_{(1)}) h_{(2)}=h_{(1)} \lambda(h_{(2)}),$ 
for all $h\in H$, by unimodularity of $H^*$, followed by the bialgebra axiom in the fifth, and $h_{(2)}S^{-1}(h_{(1)})=\varepsilon_H(h)1_H$ and the counit axiom in the sixth equality. Thus, by \Cref{prop:lambda-left-right}, $K\subset H$ is a central Frobenius extension. The last conclusion follows from \Cref{thm:ZInd-Frob-mod}.
\end{proof}


\begin{corollary}\label{cor:co-Frobenius}
If $K\subset H$ is a Frobenius extension of finite-dimensional Hopf algebras and $H$ is co-semisimple, then $K\subset H$ is a central Frobenius extension.
\end{corollary}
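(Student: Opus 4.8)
The plan is to deduce \Cref{cor:co-Frobenius} from \Cref{thm:H*-unimodular} by verifying that its hypotheses are met, namely that both $K^*$ and $H^*$ are unimodular. The key observation is that co-semisimplicity of a finite-dimensional Hopf algebra $H$ is equivalent to the dual $H^*$ being semisimple, and semisimple Hopf algebras are always unimodular (a semisimple Hopf algebra has $\varepsilon$ not vanishing on the space of integrals, forcing the distinguished grouplike to be trivial). So the unimodularity of $H^*$ is immediate from the hypothesis.

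The main point to establish is therefore that $K^*$ is also unimodular, i.e.\,that $K$ is co-semisimple whenever $H$ is. First I would recall that co-semisimplicity of $H$ means the coradical $H_0$ equals $H$, equivalently that $H$ is spanned by its simple subcoalgebras, equivalently (by the dual Maschke theorem) that $H^*$ is semisimple. Since $K\subset H$ is a Hopf subalgebra, any subcoalgebra of $K$ is a subcoalgebra of $H$; the crucial input is the converse direction that co-semisimplicity passes to Hopf subalgebras. This follows from the characterization of co-semisimplicity via the existence of a normalized left (equivalently two-sided, by a theorem of Larson) cointegral: $H$ is co-semisimple if and only if there is a total integral, i.e.\,a left integral $\Lambda^*\in H^*$ with $\Lambda^*(1_H)=1$. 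Restricting such a functional along the inclusion $K\hookrightarrow H$ (or, dually, using that the restriction map $H^*\twoheadrightarrow K^*$ is a surjection of coalgebras and a Hopf algebra map when $K$ is a Hopf subalgebra) yields a total integral on $K$, so $K$ is co-semisimple and hence $K^*$ is unimodular.

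With both $K^*$ and $H^*$ shown to be unimodular, \Cref{thm:H*-unimodular} applies directly and yields that $K\subset H$ is a central Frobenius extension, completing the proof. I would phrase the argument so that the only substantive lemma invoked is the heredity of co-semisimplicity to Hopf subalgebras; everything else is a direct application of the already-established \Cref{thm:H*-unimodular} together with the standard dictionary between co-semisimplicity of $H$ and unimodularity of $H^*$ recalled in \Cref{rem:Hopf-case}.

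The hard part will be pinning down the cleanest justification that co-semisimplicity descends to the Hopf subalgebra $K$, since this is the one nontrivial structural fact rather than a formal consequence. In the finite-dimensional setting one can sidestep delicate cointegral manipulations entirely by dualizing: the inclusion $K\hookrightarrow H$ dualizes to a surjective Hopf algebra map $H^*\twoheadrightarrow K^*$, and semisimplicity of a finite-dimensional Hopf algebra is preserved under quotient Hopf algebras, so $K^*$ is semisimple, hence unimodular. This dual reformulation is the most economical route and is what I would ultimately record, citing \cites{Mon,Rad} for the stability of semisimplicity under Hopf quotients.
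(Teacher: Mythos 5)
Your proposal is correct and follows essentially the same route as the paper: the paper likewise deduces unimodularity of $H^*$ from semisimplicity (citing \cite{LS2}), observes that $K^*$ is a quotient of the semisimple Hopf algebra $H^*$ and hence semisimple (citing \cite{Rad}*{Exercise~10.3.1}), and then applies \Cref{thm:H*-unimodular}. The dualization argument you settle on at the end is exactly the paper's proof, so the earlier detour through total integrals is unnecessary but harmless.
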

\begin{proof}
If $H$ is co-semisimple, then $H^*$ is semisimple and hence unimodular by \cite{LS2}*{Propositions 3--4}.
    Moreover, $K^*$ is a quotient of the semisimple Hopf algebra $H^*$ and hence also semisimple, cf.\,\cite{Rad}*{Exercise~10.3.1}. Thus, $K^*$ is also unimodular and the result follows from \Cref{thm:H*-unimodular}.
\end{proof}

The following example shows that even if $H^*$ and $K$ are unimodular ($K$ even semisimple), $K\subset H$ might not be a (central) Frobenius extension of Hopf algebras.

\begin{example}\label{ex:restrictedLie}
Let $\mathfrak{g}$ be a restricted Lie algebra over a field $\Bbbk$ of characteristic $p$ with $p$-map $\mathfrak{g}\to \mathfrak{g}, x\mapsto x^{[p]}$. Denote  its (finite-dimensional) restricted universal enveloping algebra by  $u(\mathfrak{g})$, see e.g.\,\cite{Mon}*{Section~2.3}. Now, by cocommutativity, $u(\mathfrak{g})^*$ is unimodular. 

Consider a restricted Lie subalgebra $\mathfrak{k}\subset \mathfrak{g}$ which is abelian and such that $\mathfrak{k}^{[p]}=\mathfrak{k}$. Then $u(\mathfrak{k})\subset u(\mathfrak{g})$ is a commutative Hopf subalgebra. By \cite{Hoch}, $u(\mathfrak{k})$ is semisimple and hence unimodular. However, $u(\mathfrak{k})\subset u(\mathfrak{g})$ might not be a Frobenius extension of Hopf algebras (unless, say, $\mathfrak{k}\subset \mathfrak{g}$ is normal, i.e.\,closed under the adjoint action). A counterexample is given by the two-dimensional restricted Lie algebra 
$$\mathfrak{g}=\Bbbk x\oplus \Bbbk t, \quad [t,x]=x, \quad t^{[p]}=t, \quad x^{[p]}=0 ,\quad\mathfrak{k}=\Bbbk t\subset\mathfrak{g}.$$
A left (but not right) integral for $u(\mathfrak{g})$ is $(t^{p-1}-1)x^{p-1}$ and a left integral for $u(\mathfrak{k})$ is $t^{p-1}-1$. Thus, 
$$\alpha_{u(\mathfrak{k})}(t)=0\neq 1=\alpha_{u(\mathfrak{g})}(t),$$
see \cite{Far}, and we do not have a Frobenius extension by the converse implication in \Cref{prop:FMS2}(ii). However, considering instead the Hopf subalgebra $u(\mathfrak{h})$, with $\mathfrak{h}=\Bbbk x$,
$$\alpha_{u(\mathfrak{h})}(x)=0=\alpha_{u(\mathfrak{g})}(x)$$
which yields a Frobenius extension of Hopf algebras, and by \Cref{thm:H*-unimodular}, a central Frobenius extension. Note that $u(\mathfrak{h})$ is not semisimple.
\end{example}

\begin{corollary}\label{cor:H-semisimple}
Let $H$ be a finite-dimensional Hopf algebra and $K\subset H$ a Hopf subalgebra. 
\begin{enumerate}[(i)]
    \item If $H$ is semisimple and co-semisimple, then $K\subset H$ is a central Frobenius extension.  
    \item If $\Bbbk$ is a field of characteristic zero and $H$ is semisimple, then $K\subset H$ is a central Frobenius extension. 
\end{enumerate}
In either case, $\cZ(\Ind_K^H)\colon \lYD{K}\to \lYD{H}$ is a braided Frobenius monoidal functor. 
\end{corollary}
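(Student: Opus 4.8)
The plan is to reduce both parts of \Cref{cor:H-semisimple} to \Cref{thm:H*-unimodular}, whose hypothesis requires that both $K^*$ and $H^*$ be unimodular. Since $K\subset H$ is an extension of finite-dimensional Hopf algebras, the only genuine work is to verify two things in each case: first, that $K\subset H$ is a \emph{Frobenius} extension at all, and second, that the duals $K^*$ and $H^*$ are unimodular. Once these are established, \Cref{thm:H*-unimodular} immediately yields that $K\subset H$ is a central Frobenius extension and, via \Cref{thm:ZInd-Frob-mod}, that $\cZ(\Ind_K^H)$ is a braided Frobenius monoidal functor, which gives the final sentence.

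For part (i), I would argue as follows. If $H$ is semisimple, then by the standard result (\cite{Mon}*{Section~3.2}, already invoked in the proof of \Cref{cor:Ind-Frob-conditions-fd}) the Hopf subalgebra $K$ is also semisimple; in particular both $H$ and $K$ are unimodular, so condition (v) of \Cref{cor:Ind-Frob-conditions-fd} holds and $K\subset H$ is a Frobenius extension. Next, co-semisimplicity of $H$ means $H^*$ is semisimple, hence unimodular by \cite{LS2}*{Propositions 3--4}; and $K^*$, being a quotient of the semisimple Hopf algebra $H^*$ (dualizing the inclusion $K\hookrightarrow H$), is itself semisimple and therefore unimodular, exactly as in the proof of \Cref{cor:co-Frobenius}. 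With $K^*$ and $H^*$ unimodular, \Cref{thm:H*-unimodular} applies.

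For part (ii), the point is that in characteristic zero semisimplicity of a finite-dimensional Hopf algebra is self-dual. By the Larson--Radford theorem, if $\cha\Bbbk=0$ and $H$ is semisimple, then $H$ is cosemisimple as well; thus part (ii) reduces to the hypotheses of part (i) and the same argument applies. Concretely, $H$ semisimple in characteristic zero forces $H^*$ semisimple, so both $H$ and $H^*$ are unimodular, and as above $K$ is semisimple (hence $K$ unimodular) while $K^*$ is a quotient of the semisimple $H^*$ (hence $K^*$ unimodular). Again \Cref{thm:H*-unimodular} delivers the conclusion.

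I do not expect any serious obstacle here, since the substance has been front-loaded into \Cref{thm:H*-unimodular} and \Cref{thm:ZInd-Frob-mod}; the corollary is purely an assembly of known semisimplicity-duality facts. The only point requiring a little care is the appeal to Larson--Radford in part (ii) to pass from ``semisimple'' to ``semisimple and co-semisimple'' in characteristic zero, and the verification that $K^*$ inherits semisimplicity as a quotient of $H^*$ rather than as a subalgebra --- this uses that the dual of the inclusion $\iota\colon K\hookrightarrow H$ is a surjection $H^*\twoheadrightarrow K^*$ of Hopf algebras, so that $K^*$ is a quotient Hopf algebra and semisimplicity descends.
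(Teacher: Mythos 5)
Your proposal is correct and follows essentially the same route as the paper: reduce (ii) to (i) via Larson--Radford, establish the Frobenius extension from unimodularity of $H$ and $K$ (via the $\alpha_H|_K=\alpha_K$ criterion of \Cref{prop:FMS2}), and then conclude centrality from unimodularity of $H^*$ and $K^*$ through \Cref{thm:H*-unimodular}. The only cosmetic difference is that you inline the argument of \Cref{cor:co-Frobenius} (semisimplicity of $H^*$ descending to its quotient $K^*$) rather than citing that corollary directly.
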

\begin{proof}
If $\cha \Bbbk =0$, by \cite{LR1}*{Theorem 3.3}, $H$ is semisimple if and only if it is co-semisimple. Thus, Part (ii) follows from Part (i).

To prove Part (i), assume that $H$ is semisimple and co-semisimple. Then $H$ and $H^*$ are unimodular by \cite{LS2}*{Propositions 3--4}.
Now assume that $K$ is a Hopf subalgebra of $H$. Since $H$ is free over $K$ by \cite{NZ}, any left integral $\Lambda_H$ of $H$ is equal to $\Lambda_K\Lambda$, for a left integral $\Lambda_K$ of $K$ and an element $\Lambda\in H$.
Thus, if $H$ is semisimple, then $K$ is also semisimple (cf.\,\cite{Mon}*{Section 3.2}) and hence unimodular. 

Thus, both $H$ and $K$ are unimodular and we have $\alpha_H=\varepsilon_H$ and $\alpha_K=\varepsilon_K$. This implies that $K\subset H$ is a Frobenius extension by \Cref{prop:FMS2}. As $H$ is co-semisimple by assumption, the result follows from \Cref{cor:co-Frobenius}.
\end{proof}

\section{Classes of examples}\label{sec:examples}

We now present classes of extensions of Hopf algebras involving Drinfeld doubles, quantum groups at roots of unity, and the Kac--Paljutkin algebra.  Throughout this section, we will always use Frobenius morphisms of the form in \Cref{eq:good-Frob-morphism} and the lax and oplax monoidal structures of Equations~\eqref{eq:Hopf-lax}--\eqref{eq:Hopf-oplax}.

\subsection{Drinfeld doubles of unimodular Hopf algebras}\label{sec:Drin-double}

Next, we apply our results to the Drinfeld double $\Drin(H)$ of an unimodular Hopf algebra $H$. We use the presentation of $\Drin(H)$ of \cite{Rad}*{Section~13.1}.

\begin{corollary}\label{cor:DrinH}
Let $H$ be a finite-dimensional unimodular Hopf algebra. Then the extension $H\subseteq \Drin(H)$ is a central Frobenius extension.
\end{corollary}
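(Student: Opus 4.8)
The plan is to verify the hypotheses of \Cref{thm:H*-unimodular}, namely that $H \subseteq \Drin(H)$ is a Frobenius extension of finite-dimensional Hopf algebras and that both $H^*$ and $\Drin(H)^*$ are unimodular; the theorem then immediately yields that the extension is central Frobenius. Throughout, recall that $\Drin(H) \cong H^* \otimes H$ as a vector space, so $\Drin(H)$ is finite-dimensional, and that $H$ embeds as the Hopf subalgebra $1_{H^*} \otimes H$.

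First I would establish that $H \subseteq \Drin(H)$ is a Frobenius extension. Since $\Drin(H) \cong H^* \otimes H$ is free (hence faithfully flat) over the subalgebra $H$, I can invoke \Cref{prop:FMS2}(ii): the extension is Frobenius precisely when $\alpha_{\Drin(H)}(h) = \alpha_H(h)$ for all $h \in H$. By hypothesis $H$ is unimodular, so $\alpha_H = \varepsilon_H$. It therefore suffices to show that $\Drin(H)$ is unimodular, so that $\alpha_{\Drin(H)} = \varepsilon_{\Drin(H)}$ restricts to $\varepsilon_H$ on $H$. This is a known fact: for any finite-dimensional Hopf algebra $H$, the Drinfeld double $\Drin(H)$ is always unimodular (the two-sided integral being $\lambda_H \otimes \Lambda_H$, the product of a right integral of $H^*$ and a left integral of $H$, up to the usual identifications). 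I would cite this, e.g.\ from \cite{Rad}*{Section~13.3}, or verify it directly using the integral formula. This simultaneously gives the Frobenius property and shows $\Drin(H)$ is unimodular.

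Next I would check the unimodularity of the duals $H^*$ and $\Drin(H)^*$. Since $H$ is unimodular by assumption, its dual $H^*$ need not automatically be unimodular in general, so here I must use the self-dual structure of the double: $\Drin(H)^* \cong \Drin(H^*)$ (up to isomorphism of Hopf algebras, using that $\Drin(H)$ is self-dual in an appropriate sense, or $\Drin(H)^{*\,\cop} \cong \Drin(H^*)$). Unimodularity of $\Drin(H)^*$ then follows from the same general fact applied to $H^*$: the double of any finite-dimensional Hopf algebra is unimodular, so $\Drin(H^*)$ is unimodular, hence $\Drin(H)^*$ is. For $H^*$ itself, I would need that $H^*$ is unimodular; this follows because $H$ unimodular is equivalent to $\alpha_H = \varepsilon_H$, and by the standard duality between integrals and modular functions (\Cref{rem:Hopf-case}), $H^*$ is unimodular if and only if the distinguished grouplike element of $H$ is trivial. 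I expect this to be the main obstacle, since $H$ unimodular does not force $H^*$ unimodular in general.

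This apparent gap is what I would scrutinize most carefully. The resolution is that the relevant hypothesis for \Cref{thm:H*-unimodular} concerns $K^* = H^*$ and the larger algebra's dual $\Drin(H)^*$, not $H$; and for the Drinfeld double the needed unimodularity of $\Drin(H)^*$ holds unconditionally while the condition on $H^*$ must genuinely be supplied. I would therefore re-examine whether the intended hypothesis is that $H$ is unimodular \emph{and} $H^*$ is unimodular (i.e.\ $H$ is both unimodular and co-unimodular), or whether the self-duality of the double lets one bypass the $H^*$ condition entirely. The cleanest route, which I would pursue, is to prove directly that $H \subseteq \Drin(H)$ is central Frobenius by exhibiting a central Frobenius morphism via \Cref{prop:lambda-left-right}: using the explicit integral $\lambda$ on $\ov{\Drin(H)}$ coming from $\Drin(H) \cong H^* \otimes H$, I would verify $\chi = \varepsilon_H$ and the symmetry condition \Cref{eq:lambda-left-right}, where the coradical of $\Drin(H)$ and the explicit comultiplication make the bicomodule property transparent. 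Concluding via \Cref{thm:ZInd-Frob-mod} then gives that $\cZ(\Ind_H^{\Drin(H)})$ is a braided Frobenius monoidal functor.
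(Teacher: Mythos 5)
Your first route has a genuine error, and your fallback route --- which is in fact the paper's actual proof --- is left unexecuted at exactly the step where the hypothesis is used. On the first route: the claim that unimodularity of $\Drin(H)^*$ ``holds unconditionally'' because $\Drin(H^*)$ is unimodular conflates the dual of the double with the double of the dual; these are not isomorphic in a way that transfers unimodularity. By \cite{Rad}*{Proposition 13.4.1} (quoted in the paper's remark immediately after this corollary), $\Drin(H)^*$ is unimodular \emph{only if both $H$ and $H^*$ are unimodular}, and the corollary does not assume $H^*$ unimodular. Indeed the paper points out that this corollary shows the converse of \Cref{thm:H*-unimodular} fails, precisely because one can have $H$ unimodular with $H^*$ not unimodular (e.g.\ $u(\mathfrak{g})^*$ from \Cref{ex:restrictedLie}), so \Cref{thm:H*-unimodular} is simply not applicable here. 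You correctly sensed this obstacle, which is to your credit.

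Your ``cleanest route'' via \Cref{prop:lambda-left-right} is the paper's approach: first $\chi=\varepsilon$ follows since $\Drin(H)$ is unimodular (the paper gets this from factorizability; your integral $\lambda_H\otimes\Lambda_H$ is also fine) and $H$ is unimodular, giving the Frobenius extension via \Cref{prop:FMS2}. But the heart of the proof is the verification of \Cref{eq:lambda-left-right}, which you defer with the assertion that ``the explicit comultiplication make[s] the bicomodule property transparent.'' It is not transparent without two concrete inputs: (a) the identification $\ov{\Drin(H)}^*\cong H^{\oop}$ with right integral $\lambda_{\ov{\Drin(H)}}(f\otimes h)=\varepsilon(h)f(\Lambda_H)$ for a \emph{left} integral $\Lambda_H$ of $H$ (from \cite{FMS}*{1.9.\,Example}); and (b) the reduction of the symmetry condition to $f_{(1)}(\Lambda_H)f_{(2)}=f_{(2)}(\Lambda_H)f_{(1)}$ for all $f\in H^*$, which is equivalent to $h\Lambda_H=\Lambda_H h$ for all $h\in H$ and holds because unimodularity of $H$ makes $\Lambda_H$ a two-sided integral, hence central. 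This is the only place the hypothesis on $H$ enters the centrality argument, and your sketch does not locate it; as written, the proposal does not constitute a proof.
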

\begin{proof}
Factorizable Hopf algebras are unimodular \cite{Rad}*{Proposition 12.4.2}. 
 Hence, the Drinfeld double of a finite-dimensional Hopf algebra $H$ is unimodular as it is factorizable \cite{Rad}*{Theorem 3.2.1}. Hence, $\alpha_{\Drin(H)}=\varepsilon$. As $H$ is also unimodular by assumption, we have that $\alpha_H=\varepsilon$. Thus, $\chi=(\alpha_{\Drin(H)})|_H *\alpha_H = \varepsilon $ so that $H\subset \Drin(H)$ is a Frobenius extension.

As shown in \cite{FMS}*{1.9.\,Example}, a right integral $\lambda$ for $\ov{\Drin(H)}^*\cong H^{\oop}$ corresponds to a \emph{left} integral $\Lambda_H$ of $H$ and we may write 
$$\lambda_{\ov{\Drin(H)}}(f\otimes h)=\varepsilon(h)f(\Lambda_H).$$
We now compute, for $f\in H^*$ and $h\in H$,
\begin{align*}
    (\lambda_{\ov{\Drin(H)}}\otimes \id_{\Drin(H)})\Delta_{\Drin(H)}(f\otimes h)&=
     \lambda_{\ov{\Drin(H)}}(f_{(2)}\otimes h_{(1)})\otimes (f_{(1)}\otimes h_{(2)})\\
     &=f_{(2)}(\Lambda_H) f_{(1)}\otimes h\\
     &=f_{(1)}(\Lambda_H) f_{(2)}\otimes h\\
     &=(\id_{\Drin(H)}\otimes \lambda_{\ov{\Drin(H)}})\Delta_{\Drin(H)}(f\otimes h),
\end{align*}
where the second to last equality uses that $H\cong H^{**}$ is unimodular.  Thus, $H\subset \Drin(H)$ is a central Frobenius extension by \Cref{prop:lambda-left-right}.
\end{proof}

In particular, the induction functor 
$$\Ind_H^{\Drin(H)}\colon \lMod{H}\to \lMod{\Drin(H)}\simeq \lYD{H},$$
which is left and right adjoint to the forgetful functor $F\colon \lYD{H}\to \lMod{H}$, is a Frobenius monoidal functor which extends to Drinfeld centers as a braided Frobenius monoidal functor.

\begin{remark}
    Note that $\Drin(H)^*$ will only be unimodular if both $H$ and $H^*$ are unimodular \cite{Rad}*{Proposition 13.4.1}. Hence, \Cref{cor:DrinH} shows that the converse of  \Cref{thm:H*-unimodular} does not hold. For example, the Hopf algebra $u(\mathfrak{g})$ from \Cref{ex:restrictedLie} is not unimodular but its dual is. Thus, $u(\mathfrak{g})^*\subset \Drin(u(\mathfrak{g})^*)\cong \Drin(u(\mathfrak{g}))^{\oop}$ is a central Frobenius extension even though $\Drin(u(\mathfrak{g}))^{\oop}$ is \emph{not} unimodular. 
\end{remark}

\subsection{Quantum groups at roots of unity}\label{sec:quantum-groups}

In this subsection, we apply the results of \Cref{sec:ind-frob-mon} to obtain Frobenius monoidal functors to categories of representations of quantum groups at roots of unity.

We let $U_\epsilon(\fr{g})$ denote the De Concini--Kac--Procesi form of the \emph{quantized enveloping algebra}, or, \emph{quantum group}, associated to a fixed Cartan datum of a semisimple Lie algebra $\fr{g}$ and specialized to a root of unity $\epsilon\in \mC$ of \emph{odd} order $\ell$,\footnote{The order $\ell$ is also required to be prime to $3$ if $\mathfrak{g}$ contains a factor of type $G_2$} see \cite{DeCK} or \cite{CP}*{Section 9.1}.
We let $E_i, F_i, K_i$ denote the usual generators of these quantum groups.

We will also consider the small quantum group $u_\epsilon(\fr{g})$, defined as the quotient of $U_\epsilon(\fr{g})$ by the ideal generated by $E_\alpha^\ell, F_\alpha^\ell, K_i^\ell$, for positive roots $\alpha$ of $\fr{g}$. The finite-dimensional Hopf algebra $u_\epsilon(\fr{g})$ is factorizable \cite{Lyu}*{Corollary A.3.3} and hence unimodular, see, e.g.\,\cite{Rad}*{Proposition 12.4.2}.
We refer to \cite{DGP}*{Section~2.1} for explicit formulas of the braiding, ribbon structure, and integrals for $u_\epsilon(\fr{g})$ and its dual. 

It is well-known that $U_\epsilon(\fr{g})$ contains a large central Hopf subalgebra $Z_0$, which can be identified with the coordinate Hopf algebra $\OH$ of an affine algebraic group $\sfH$, giving an extension of Hopf algebras $\iota \colon \OH\hookrightarrow U_\epsilon(\fr{g})$. Being generated by grouplike and skew-primitive elements, $U_\epsilon(\fr{g})$ is a pointed Hopf algebra and $U_\epsilon(\fr{g})$ is free and finite over $\OH$. 
Thus, by \Cref{cor:H-pointed}, the extension $\OH\subset U_\epsilon(\fr{g})$ is of right integral type and since $\OH\subset Z(H)$, $\OH$ is a normal Hopf subalgebra. Thus, we obtain the following result from \Cref{cor:normalHopf}.

\begin{corollary}\label{cor:quantum-ind}
    The induction functor 
    $$\Ind_{\iota}\colon \lMod{\OH}\to \lMod{U_\epsilon(\fr{g})}$$
    is a Frobenius monoidal functor.
\end{corollary}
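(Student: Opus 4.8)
The plan is to recognize the statement as a direct application of the general machinery developed above, once the requisite structural properties of the extension $\OH\subset U_\epsilon(\fr{g})$ are assembled. Concretely, I would aim to verify the hypotheses of \Cref{cor:Ind-Frob-conditions}: that the extension is finite and faithfully flat, and that $\OH$ lies in the center of $U_\epsilon(\fr{g})$.

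First I would record that $U_\epsilon(\fr{g})$ is a pointed Hopf algebra, being generated by the grouplike elements $K_i$ and the skew-primitive elements $E_i,F_i$. By \Cref{cor:H-pointed}, any finite extension with pointed total algebra is of right integral type and, crucially, $U_\epsilon(\fr{g})$ is free over every Hopf subalgebra; in particular it is free, hence faithfully flat, over $\OH$. Finiteness of the extension is the classical De Concini--Kac--Procesi fact that at an odd root of unity $U_\epsilon(\fr{g})$ is a finite module over its central subalgebra $Z_0\cong\OH$.

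Next I would invoke centrality: the identification $Z_0\cong\OH$ exhibits $\OH$ as a central Hopf subalgebra, so $\OH\subset Z(U_\epsilon(\fr{g}))$, and in particular $\OH$ is a normal Hopf subalgebra. With finiteness and faithful flatness already in hand, this places us exactly in case (ii) of \Cref{cor:Ind-Frob-conditions}, which immediately yields that $\Ind_\iota$ is Frobenius monoidal. Equivalently, one may route through \Cref{cor:normalHopf} to obtain that $\OH\subset U_\epsilon(\fr{g})$ is a Frobenius extension of right integral type, and then apply \Cref{thm:Hopf-Frobenius-monoidal}.

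There is no genuine analytic obstacle here, since all the work is carried out by the preceding theory; the proof is essentially a matter of checking hypotheses. The only points demanding care are the structural inputs on $U_\epsilon(\fr{g})$---pointedness, finiteness, freeness, and the centrality of $Z_0$---all of which are standard in the theory of quantum groups at roots of unity and are recorded in the discussion preceding the statement.
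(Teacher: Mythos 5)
Your argument matches the paper's: the corollary is deduced exactly as in the paragraph preceding it, by noting that $U_\epsilon(\fr{g})$ is pointed (hence the finite extension is of right integral type and free over $\OH$ by \Cref{cor:H-pointed}) and that $\OH\subset Z(U_\epsilon(\fr{g}))$ makes $\OH$ a normal Hopf subalgebra, so \Cref{cor:normalHopf} together with \Cref{thm:Hopf-Frobenius-monoidal} (equivalently, \Cref{cor:Ind-Frob-conditions}) applies. The proposal is correct and takes essentially the same route.
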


Because $\OH$ is a central subalgebra, we have that the ideal $U_\epsilon(\fr{g})\OH^+$ is two-sided. This makes $\ov{U_\epsilon(\fr{g})}$ a Hopf algebra which can be identified with $u_\epsilon(\fr{g})$. Thus, as a right integral, we can use the integral $\lambda$
of $u_\epsilon(\fr{g})^*$ from \cite{DGP}*{Section~2.1} for which, by \Cref{cor:Ind-Frob-conditions}(ii), 
$$\tr(h)=\lambda(\pi(h_{(1)}))\otimes h_{(2)}$$
is a Frobenius morphism, where $\pi\colon U_\epsilon(\fr{g})\to u_\epsilon(\fr{g})$ is the quotient morphism.

\begin{remark}\label{rmk:q-groups}
The functor $\Ind_{\OH}^{U_\epsilon(\fr{g})}$ was shown to induce an oplax monoidal functor $\cZ(\Ind_\iota)$ on the Drinfeld centers in \cite{FLP2}*{Corollary 8.8(ii)}. We note that $\OH\subset U_\epsilon(\fr{g})$ is \emph{not} a central Frobenius extension of Hopf algebras. Indeed, $\tr$ is not a morphism of $H$-$H$-bicomodules by \Cref{prop:lambda-left-right}, since
$$\tr(h)=\lambda(\pi(h_{(1)}))\otimes h_{(2)}\neq h_{(1)}\otimes \lambda(\pi(h_{(2)})).$$
To verify this, consider the unique monomial 
$$X=F_1^{\ell-1}\ldots F_r^{\ell-1}K_{2\rho} E_1^{\ell-1}\ldots E_r^{\ell-1},$$
on which the integral is non-zero, the coproduct takes the form
$$\Delta(X) \in X\otimes K_{2\rho} K_{1}^{\ell-1}\ldots K_r^{\ell-1}+K_{2\rho}K_{1}^{1-\ell}\ldots K_r^{1-\ell}\otimes X +\ker(\lambda\pi)\otimes U_\epsilon(\fr{g})+U_\epsilon(\fr{g})\otimes \ker(\lambda\pi).$$
Thus, applying $\lambda(\pi(h_{(1)}))\otimes h_{(2)}$ and $h_{(1)}\otimes \lambda(\pi(h_{(2)}))$ yields
$$K_{2\rho} K_{1}^{\ell-1}\ldots K_r^{\ell-1}\quad \text{and}\quad  K_{2\rho}K_{1}^{1-\ell}\ldots K_r^{1-\ell}, \quad\text{respectively}.$$ Multiplying with $K_{2\rho}^{-1}$ yields distinct elements in the group algebra generated by the $K_i$, unless $\ell=2$, which contradicts our assumption that $\ell$ is odd. 
Thus, we do \emph{not} obtain a Frobenius monoidal functor $\cZ(\Ind_{\OH}^{U_\epsilon(\fr{g})})$ from \Cref{thm:ZInd-Frob-mod}. 
\end{remark}

\begin{example}
By the above, we see that $\Ind_{\OH}^{U_\epsilon(\fr{g})}(\Bbbk)\cong \ov{U_\epsilon(\fr{g})}\cong u_\epsilon(\fr{g})$ is a Frobenius algebra in $\lmod{U_\epsilon(\fr{g})}$. The $U_\epsilon(\fr{g})$-action is the regular one, restricted along $\pi$. The coproduct is that of the small quantum group $u_\epsilon(\fr{g})$ and the product can be obtained by dualizing the coproduct along the trace map $\tr$ from \Cref{rmk:q-groups}.
\end{example}

\subsection{Hopf subalgebras of small quantum groups}\label{sec:u-subHopf}

Recall that the small quantum group, for a root of unity of odd order $\ell>2$, has a triangular decomposition 
$$u_\epsilon(\fr{g})=u_\epsilon(\fr{n}_-)\otimes \mC\sfK\otimes u_\epsilon(\fr{n}_+),$$
as a $\mC$-vector space,
where $\sfK$ is the group generated by the $K_i$, which is isomorphic to $\mZ_\ell^r$, and $u_\epsilon(\fr{n}_-)$ and $u_\epsilon(\fr{n}_+)$ are the subalgebras generated by the $F_i$ and the $E_i$, respectively.

We will now consider triples $(\Sigma, I_+, I_-)$ defining a Hopf subalgebra of $u_\epsilon(\fr{g})$. Here, $\Sigma\subset \sfK$ is a subgroup and $I_\pm$ are subsets of $\{1,\ldots, r\}$ which are identified with subsets of the simple roots of $\fr{g}$. 
We require that $K_i\in \Sigma$ if $i\in I_+\cup I_-$. It is easy to verify that the subalgebra generated by the $E_i$, for $i\in I_+$, $F_j$, for $j\in I_-$, and $g\in \Sigma$ generate a Hopf subalgebra of $u_\epsilon(\fr{g})$ which we denote by $u_\epsilon(\Sigma, I_+, I_-)$. By  \cite{Mue}*{Theorem~6.3}, see also \cite{AG}*{Corollary~1.13}, all possible Hopf subalgebras of $u_\epsilon(\fr{g})$ are obtained this way.

\begin{example}
       Let $\fr{h}\subset \fr{g}$ be an inclusion of semisimple Lie algebras, defined  by a sub-Cartan datum. Then one obtains a triple $(\Sigma,I_+,I_-)$ as above with $I_+=I_-$ defined such that the simple roots of $\fr g$ indexed by $I_+=I_-$ are the simple roots of $\fr{h}$ and $\Sigma$ is the subgroup of $\sfK$ defining the Cartan part of $u_\epsilon(\fr{h})$. This displays $u_\epsilon(\fr{h})$ as a Hopf subalgebra of $u_\epsilon(\fr{g})$.
\end{example}

We will now determine those Hopf subalgebras of $u_\epsilon(\fr{g})$ that lead to Frobenius extensions. 

\begin{proposition}\label{prop:q-subgroup-unimodular} 
If $I$ is a subset of the simple roots of $\fr{g}$ and $\Sigma\subset \sfK$ is a subgroup containing $(K_i)_{i\in I}$, then $u_\epsilon(\Sigma,I,I)$ is unimodular. Conversely, if $\ell>3$ and  $u_\epsilon(\Sigma, I_+, I_-)$ is unimodular, then $I_+=I_-$.
\end{proposition}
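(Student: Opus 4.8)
The plan is to reduce unimodularity of $H:=u_\epsilon(\Sigma,I_+,I_-)$ to the computation of its modular function $\alpha_H$ and then to read off exactly when the latter is trivial. By \Cref{rem:Hopf-case}, $H$ is unimodular if and only if $\alpha_H=\varepsilon$. Since $H$ is pointed with group of grouplikes $\Sigma$, the algebra map $\alpha_H\colon H\to\mC$ is forced to vanish on the nilpotent generators $E_i$ $(i\in I_+)$ and $F_j$ $(j\in I_-)$, so it is completely determined by its restriction to $\Sigma$. Thus unimodularity is equivalent to $\alpha_H(g)=1$ for all $g\in\Sigma$, and by the defining relation $g\Lambda=\alpha_H(g)\Lambda$ (\Cref{eq:modularfun}) for a nonzero right integral $\Lambda$, it suffices to understand how $\Sigma$ acts on the one-dimensional space of right integrals.

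To compute this action I would pass to the associated graded Hopf algebra $\mathrm{gr}\,H$ for the coradical filtration. Its grouplikes are again $\Sigma$, and since the cross relations $[E_i,F_i]\sim K_i-K_i^{-1}$ lie in the coradical part, the images of the $E_i$ and $F_j$ $q$-commute in $\mathrm{gr}\,H$; hence $\mathrm{gr}\,H$ is a bosonization of the $q$-commuting product of the two nilpotent parts by $\mC\Sigma$, with the same PBW basis as $H$. As the right integral of $H$ has nonzero image in $\mathrm{gr}\,H$ and $\Sigma$ acts through the same grouplikes, one gets $\alpha_H=\alpha_{\mathrm{gr}\,H}$ on $\Sigma$. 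In $\mathrm{gr}\,H$ a right integral is simply $\Lambda=\Lambda_+\Lambda_-\sum_{g\in\Sigma}g$, where $\Lambda_+$ and $\Lambda_-$ are the top monomials of the positive and negative nilpotent parts, of weights $+(\ell-1)\,2\rho_{I_+}$ and $-(\ell-1)\,2\rho_{I_-}$; one checks directly that $\Lambda E_i=\Lambda F_j=0$ and $\Lambda g=\Lambda$. Conjugating by $K_j$ then yields the key formula
\[
\alpha_H(K_j)=\epsilon^{(\ell-1)\,(\alpha_j,\,2\rho_{I_+}-2\rho_{I_-})},\qquad j\in I_+\cup I_-,
\]
where $2\rho_I=\sum_{\alpha\in R^+_I}\alpha$ is the sum of the positive roots of the sub-root-system generated by $I$ and $(\,\cdot\,,\,\cdot\,)$ is the invariant form.

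The forward direction is then immediate: for $I_+=I_-=I$ the weight $2\rho_{I_+}-2\rho_{I_-}$ vanishes, so $\alpha_H(K_j)=1$ for every $j$ and therefore $\alpha_H=\varepsilon$, proving that $u_\epsilon(\Sigma,I,I)$ is unimodular.

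For the converse I would argue by contraposition: assuming $I_+\ne I_-$, I produce a generator $K_{i_0}$ with $i_0\in I_+\cup I_-\subseteq\Sigma$ on which $\alpha_H$ is nontrivial. Taking $i_0$ in the symmetric difference, say $i_0\in I_+\setminus I_-$, the standard identities $\langle 2\rho_{I_+},\alpha_{i_0}^\vee\rangle=2$ (as $i_0\in I_+$) and $\langle 2\rho_{I_-},\alpha_{i_0}^\vee\rangle\le 0$ (as $\alpha_{i_0}$ pairs nonpositively with every positive root of the $I_-$-subsystem) give $\langle 2\rho_{I_+}-2\rho_{I_-},\alpha_{i_0}^\vee\rangle=2+\bigl|\langle 2\rho_{I_-},\alpha_{i_0}^\vee\rangle\bigr|\ge 2$. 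Using $(\ell-1)\equiv-1$ modulo the order of $\epsilon$ and the coprimality of $d_{i_0}$ with $\ell$, unimodularity would force this positive integer to be divisible by $\ell$. The main obstacle is precisely this divisibility step: one must control $\langle 2\rho_{I_-},\alpha_{i_0}^\vee\rangle$ — via the connected component of $I_-\cup\{i_0\}$ containing $i_0$ and the explicit coefficients of $2\rho$ along that component — and show the total lies strictly between $0$ and $\ell$, so that it is nonzero modulo $\ell$. This is where the hypothesis $\ell>3$ is used, ruling out the first coincidence $3\equiv 0$ produced by a single $A_2$-bond; the case $i_0\in I_-\setminus I_+$ is handled symmetrically. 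I expect the delicate point to be ensuring the estimate survives for the larger components and non-simply-laced bonds, which is the genuinely Lie-theoretic core of the statement.
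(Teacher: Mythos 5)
Your overall strategy---reducing unimodularity to the value of $\alpha_H$ on $\Sigma$ and reading that value off from the $\mathrm{Ad}_\Sigma$-weight of the leading term of the integral---is a genuinely different route from the paper's, which instead builds a left integral step by step along the chain $u_\epsilon(\Sigma,I,I)\subset u_\epsilon(\Sigma,I_+,I)\subset u_\epsilon(\Sigma,I_+,I_-)$ with $I=I_+\cap I_-$ and reduces the converse to the combinatorial statement (\Cref{lem:Cartan-row-sums}) that a sign-modified Cartan matrix restricted to $(I_+\setminus I)\cup(I_-\setminus I)$ cannot have all row sums $\equiv 0 \pmod \ell$. Your forward direction is fine. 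The converse, however, is not a proof: you explicitly leave open the divisibility step, and the quantitative claim you propose to close it with is false.

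Concretely, you assert that for $i_0\in I_+\setminus I_-$ the integer $\langle 2\rho_{I_+}-2\rho_{I_-},\alpha_{i_0}^\vee\rangle$ ``lies strictly between $0$ and $\ell$''. It does not: it grows with the size of the component of $I_-$ attached to $i_0$. Take $\fr{g}$ of type $A_n$, $I_+=\{1,\dots,n\}$, $I_-=\{1,\dots,n-1\}$, $i_0=n$; then $2\rho_{I_+}-2\rho_{I_-}=\alpha_1+2\alpha_2+\cdots+n\alpha_n$, which pairs to $0$ with $\alpha_j^\vee$ for every $j<n$ and to $n+1$ with $\alpha_n^\vee$. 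For $n=4$ and $\ell=5$ your key formula therefore gives $\alpha_H(K_j)=1$ for \emph{all} $j$, i.e.\ it predicts that $u_\epsilon(\sfK,I_+,I_-)$ is unimodular with $I_+\neq I_-$, so no alternative choice of $i_0$ rescues the contrapositive. At this point one of two things must hold: either your exponent $(\ell-1)(\alpha_j,2\rho_{I_+}-2\rho_{I_-})$ is not the correct modular function---note it does \emph{not} agree with the exponent $\sum_{k\in I_+\setminus I}(\alpha_j,\alpha_k)-\sum_{m\in I_-\setminus I}(\alpha_j,\alpha_m)$ used in the paper's proof, which involves only the \emph{simple} roots outside $I$ rather than all positive roots of the subsystems (already for the Borel of $u_\epsilon(\fr{sl}_3)$ the two give $\epsilon^{-2}$ versus $\epsilon^{1}$ at $K_1$)---or the route through $2\rho_{I_+}-2\rho_{I_-}$ cannot establish the converse at all. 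Either way the proof is incomplete at exactly the point you flag as the ``Lie-theoretic core'': you must rederive the exponent and reconcile it with the paper's computation, or produce a different witness $g\in\Sigma$ with $\alpha_H(g)\neq 1$. (The intermediate claim $\alpha_H=\alpha_{\mathrm{gr}\,H}$ on $\Sigma$ is also only sketched---one needs that the leading term of the integral is the top PBW monomial times the \emph{untwisted} group sum---but that part is repairable; the missing combinatorics is the real gap.)
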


\begin{corollary}\label{cor:q-subgroup-Frobenius}
Assume that $I_+,I_-$ are subsets of the simple roots of $\fr{g}$ and $\ell>3$.
Then, the extension $u_\epsilon(\Sigma,I_+,I_-)\subset u_\epsilon(\fr{g})$ is a Frobenius extension if and only if $I_+=I_-$.
\end{corollary}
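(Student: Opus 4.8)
The plan is to reduce the statement to \Cref{prop:q-subgroup-unimodular} by way of the unimodularity criterion for Frobenius extensions into a unimodular Hopf algebra. The essential observation is that all the real work is already encapsulated in that proposition, so the corollary should follow from a short combination of facts recalled earlier.

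First I would record that $u_\epsilon(\fr{g})$ is finite-dimensional and unimodular, the latter because it is factorizable by \cite{Lyu}*{Corollary A.3.3} and factorizable Hopf algebras are unimodular by \cite{Rad}*{Proposition 12.4.2}. Since $u_\epsilon(\Sigma, I_+, I_-)$ is a Hopf subalgebra of the finite-dimensional Hopf algebra $u_\epsilon(\fr{g})$, the extension is automatically finite, and \Cref{cor:Frobenius-unimodular-case} applies with $H = u_\epsilon(\fr{g})$. That corollary then yields the key reduction: the extension $u_\epsilon(\Sigma, I_+, I_-) \subset u_\epsilon(\fr{g})$ is a Frobenius extension if and only if $u_\epsilon(\Sigma, I_+, I_-)$ is itself unimodular.

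It then remains to translate unimodularity of the subalgebra into the combinatorial condition $I_+ = I_-$, which is exactly the content of \Cref{prop:q-subgroup-unimodular}. Its forward direction provides that $u_\epsilon(\Sigma, I, I)$ is unimodular for every admissible $\Sigma$, so when $I_+ = I_-$ the subalgebra is unimodular and hence the extension is Frobenius. Its converse direction, where the hypothesis $\ell > 3$ is needed, shows that unimodularity of $u_\epsilon(\Sigma, I_+, I_-)$ forces $I_+ = I_-$. Chaining these two implications through the equivalence of the previous paragraph gives the claimed characterization.

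I do not expect a genuine obstacle at the level of the corollary itself, since its content is entirely inherited from \Cref{prop:q-subgroup-unimodular}; once that proposition is available the corollary is a direct dictionary translation from \emph{unimodular} to \emph{Frobenius extension}. The only subtlety worth checking carefully is that \Cref{cor:Frobenius-unimodular-case} is invoked with the ambient Hopf algebra $u_\epsilon(\fr{g})$ unimodular, rather than its dual, and that the finiteness hypothesis on the extension is met; both hold here, so the reduction is legitimate.
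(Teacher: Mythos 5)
Your proposal is correct and follows exactly the paper's own argument: unimodularity of $u_\epsilon(\fr{g})$ via factorizability, then \Cref{cor:Frobenius-unimodular-case} to reduce the Frobenius property to unimodularity of the subalgebra, then \Cref{prop:q-subgroup-unimodular} to translate that into $I_+=I_-$. Nothing further is needed.
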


\begin{proof}[Proof of \Cref{prop:q-subgroup-unimodular}]
Consider any triple $(\Sigma,I_+,I_-)$ as above. Denote $u'=u_\epsilon(\Sigma, I_+, I_-)$ and $u=u_\epsilon(\fr{g})$. For $I=I_+\cap I_-$, we have a sequence of extensions of Hopf algebras
$$u_\epsilon(\Sigma, I,I)\subset u_\epsilon(\Sigma, I_+,I)\subset u'\subset u.$$
We  first claim that  $u_\epsilon(\Sigma, I,I)$ is unimodular. Indeed, restricting $\Sigma$ to the subgroup generated by those $K_i$ with $i\in I$ we obtain a quantum group $u_\epsilon(\fr{h})$, where $\fr{h}$ is obtained by restricting the Cartan datum to the subset $I$. As $\fr{h}$ is again a semisimple Lie algebra, the Hopf algebra $u_\epsilon(\fr{h})$ is unimodular. Further, arguing as in the proof of \Cref{{cor:H-semisimple}}, a non-zero left integral $\Lambda$ for $u_\epsilon(\Sigma, I,I)$ is obtained as a product $\Lambda_2=\Lambda_1\Lambda'$ where $\Lambda_1$ is a left integral for $u_\epsilon(\fr{h})$ and $\Lambda'\in \mC \Sigma$ (as otherwise $\Lambda_2=0$). We now check that for $K_j$, with $j\notin I$,
$$\Lambda_2K_j=\Lambda_1\Lambda'K_j=\Lambda_1K_j\Lambda'=K_j\Lambda_1\Lambda'=K_j\Lambda_2=\Lambda_2.$$
The second equality uses that $\Lambda'\in \mC \Sigma$, which is commutative. The third equality uses the form of $\Lambda_1$, see \cite{Lyu}, in which for any $E_i$, the corresponding $F_i$ appears with the same power in $\Lambda_1$, and that $E_iK_j=\epsilon^{-i\cdot j}K_jE_i$  and $F_iK_j=\epsilon^{i\cdot j}F_iK_j$. The last equality uses the assumption that $\Lambda_2$ is a left integral for $u_\epsilon(\Sigma, I,I)$. This shows that $u_\epsilon(\Sigma, I,I)$ is unimodular. 

Next, we claim that $u_\epsilon(\Sigma, I_+,I)$ is not unimodular unless $I_+=I$. Assume $\Lambda_3=\Lambda_2\Lambda''$ is a non-zero left integral for $u_\epsilon(\Sigma, I_+,I)$, with $\Lambda_2$ as in the previous paragraph. Given $j\in I_+\setminus I$, we use the triangular decomposition of $u$ to see that $\Lambda''\in \mC\Sigma\prod_{k\in I_+\setminus I}E_k^{\ell-1}$. Thus, $\Lambda_3E_j=0=\varepsilon(E_j)\Lambda_3$ for any $j\in I_+\setminus I$. It also follows that
$$\Lambda_3K_j=\epsilon^{\sum_{k\in I_+\setminus I}(1-\ell)j\cdot k}\Lambda_2K_j\Lambda''=\epsilon^{\sum_{k\in I_+\setminus I}j\cdot k}\Lambda_3,$$
using that $\Lambda_2K_j=\Lambda_2$ as $\Lambda_2$ is a left and right integral for $u_\epsilon(\Sigma, I,I)$. Now, if $\epsilon^{\sum_{k\in I_+\setminus I}j\cdot k}=1$, for all $j\in I_+\setminus I$, then the restriction of the Cartan matrix to the subset $I_+\setminus I$ has only zero row sums when viewed as a matrix with entries in $\mZ_{\ell}$. However, \Cref{lem:Cartan-row-sums}(i) below shows that this is impossible. Hence $\Lambda_3$ is not a right integral and $u_\epsilon(\Sigma, I_+,I)$ is not unimodular.

Finally, again assuming $\ell>3$, we claim that $u'=u_\epsilon(\Sigma, I_+,I_-)$ is unimodular if and only if $I_-=I_+=I$. Let $\Lambda_4=\Lambda_3\Lambda'''$ be a non-zero left integral for $u'$ with $\Lambda_3$ a left integral for $u_\epsilon(\Sigma, I_+,I)$ as in the previous paragraph. Take $F_m$ with $m\in I_-\setminus I$. A similar calculation as in the previous paragraph shows that 
$$\Lambda_4K_m=\epsilon^{-\sum_{n\in I_-\setminus I}m\cdot n}\Lambda_3K_m \Lambda'''=\epsilon^{\sum_{k\in I_+\setminus I}m\cdot k-\sum_{n\in I_-\setminus I}m\cdot n}\Lambda_4.$$
Similarly, we have, for $j\in I_+\setminus I$, that
$$\Lambda_4K_j=\epsilon^{\sum_{k\in I_+\setminus I}j\cdot k-\sum_{n\in I_-\setminus I}j\cdot n}\Lambda_4.$$
Note that $(I_+\setminus I)\cap (I_-\setminus I)$ is empty. Hence, $\Lambda_4$ is a right integral if and only if  the square matrix $d=(d_{a,b})$, with entries in $\mZ_\ell$ and rows and columns indexed by $a,b\in (I_+\setminus I)\cup (I_-\setminus I)$,  defined by 
$$d_{a,b}=\begin{cases}
a\cdot b, &\text{if $b$ in $(I_+\setminus I)$},\\
-a\cdot b, &\text{if $b$ in $(I_-\setminus I)$},
\end{cases}
$$
has only zero row sums. This is not possible as shown in \Cref{lem:Cartan-row-sums}(ii) below, since, by assumption $\ell>3$ is odd.
\end{proof}

\begin{proof}[Proof of \Cref{cor:q-subgroup-Frobenius}]
We know that $u$ is unimodular as it is factorizable. Thus, by \Cref{cor:Frobenius-unimodular-case}, $u'\subset u=u_\epsilon(\fr{g})$ is a Frobenius extension if and only if $u'$ is unimodular. Thus the claim follows as by \Cref{prop:q-subgroup-unimodular}, $u'=u_\epsilon(\Sigma,I_+,I_-)$ is unimodular if and only if  $I_+=I_-=I$. 
\end{proof}

\begin{lemma}\label{lem:Cartan-row-sums}
    Let $C$ be a Cartan matrix of finite type and $\ell> 3$ an odd integer.
    \begin{enumerate}[(i)]
\item 
    Not all row sums of $C$ are equal to zero modulo $\ell$.
\item 
    Now assume $\ell>3$ and consider the modified Cartan matrix $C_J$ in which the columns corresponding to a subset $J$ of the simple roots are multiplied by $-1$. Then, not all row sums of $C_J$ are equal to zero modulo $\ell$.
\end{enumerate}
\end{lemma}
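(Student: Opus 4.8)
The plan is to treat parts (i) and (ii) uniformly by recording the column sign flips in a single vector. For a subset $J$ of the simple roots, define $\sigma\in\{\pm1\}^n$ by $\sigma_j=-1$ exactly when $j\in J$ and $\sigma_j=1$ otherwise; then the $i$-th row sum of $C_J$ is $(C\sigma)_i=\sum_j c_{ij}\sigma_j$, and part (i) is precisely the case $J=\emptyset$, i.e.\ $\sigma=(1,\dots,1)$. Hence it suffices to prove the single statement: for every sign vector $\sigma\in\{\pm1\}^n$ there is a row index $i$ with $(C\sigma)_i\not\equiv0\pmod{\ell}$.

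The key structural input I would isolate is that $C$ always has a \emph{good} row, by which I mean a row $i$ whose only nonzero entries are $c_{ii}=2$ together with at most one off-diagonal entry, which then equals $-1$. I would prove this by reducing to connected components: the restriction of a finite-type Cartan matrix to a subset of simple roots is again a finite-type Cartan matrix, hence a direct sum of connected ones, and it is enough to find a good row inside one summand. A connected finite-type Dynkin diagram is a tree carrying at most one multiple bond (a standard feature of the classification). If the component is a single node it has type $A_1$ and its row is $(2)$; otherwise the tree has at least two leaves, and a leaf $i$ with unique neighbour $j$ fails to be good only when its edge is the multiple bond seen from the side where the Cartan integer is $-2$ or $-3$. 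Since there is at most one multiple bond, at most one leaf is bad, so a good leaf survives.

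Granting a good row $i$, the conclusion is then a one-line estimate. For any $\sigma\in\{\pm1\}^n$ one has either $(C\sigma)_i=2\sigma_i$ (isolated node) or $(C\sigma)_i=2\sigma_i-\sigma_j$ (good leaf); in the first case the value lies in $\{2,-2\}$ and in the second in $\{1,3,-3,-1\}$. In all cases $(C\sigma)_i$ is a nonzero integer of absolute value at most $3$, and since $\ell>3$ it cannot be divisible by $\ell$. This yields both parts at once; for part (i) the value at a good leaf is exactly $2-1=1$.

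The main obstacle is the structural claim that a good row always exists: everything downstream is the trivial divisibility estimate above. Two points I would be careful about are that $C$ need not be connected in the intended application (it is the restriction of a Cartan matrix to an arbitrary subset of simple roots), which is why I argue componentwise, and the invocation of the classification fact that a connected finite-type diagram is a tree with at most one multiple bond, which is what limits the number of bad leaves to one. As an alternative in the generic situation one may observe that whenever $\ell\nmid\det C$ the matrix $C$ is invertible modulo $\ell$, so $C\sigma\equiv0$ would force $\sigma\equiv0$, which is impossible; this already settles every type except type $A_m$ with $\ell\mid m+1$ (and, for composite $\ell$, certain products of component determinants). The leaf argument is preferable precisely because it covers type $A$ and composite $\ell$ uniformly without a determinant case-check.
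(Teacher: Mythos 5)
Your proof is correct and follows essentially the same route as the paper's: both arguments locate a leaf of the Dynkin diagram whose row is $(2,-1,0,\dots,0)$ (or an isolated node with row $(2)$) and observe that the corresponding signed row sum is a nonzero integer of absolute value at most $3$, hence nonzero modulo $\ell>3$. Your version is marginally more uniform in that the sign-vector formulation treats (i) and (ii) together and the ``at most one bad leaf'' observation replaces the paper's explicit rank-$2$ case check, but the key idea is identical.
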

\begin{proof}
First, observe that if the statements hold for all indecomposable Cartan matrices then they hold for decomposable Cartan matrices. 

To prove Part~(i), note that the statement is easily verified for Dynkin diagrams of rank $1$ or $2$. Next, every Dynkin diagram for rank $r\geq 3$ has at least two endpoints with just a single other vertex connected to it. Using a suitable ordering of simple roots, this vertex corresponds to a row $(2,-1,0,\dots,0)$ in the Cartan matrix, whose row sum is non-zero modulo any $\ell\neq 1$.

To prove Part~(ii), multiply each column of the roots from a subset $J$ of the simple roots $I$ by $-1$. 
For rank $1$, the statement is clear since $\ell \neq 2$. For rank $2$, we check all three cases separately. The Cartan matrices are given by 
\begin{align*}
    \begin{pmatrix}
        2\eta_1  & -\eta_2\\
        -\eta_1  & 2\eta_2 
    \end{pmatrix}, \qquad 
        \begin{pmatrix}
        2\eta_1  & -\eta_2\\
        -2\eta_1  & 2\eta_2 
    \end{pmatrix},\qquad   
    \begin{pmatrix}
        2\eta_1  & -\eta_2\\
        -3\eta_1  & 2\eta_2 
    \end{pmatrix}, \qquad \eta_1,\eta_2\in \{\pm 1\}.
\end{align*}
For the second two matrices, it is not possible to choose $\eta_1,\eta_2$ such that both row sums are zero modulo any $\ell\neq 0$. For the first matrix, this is only possible when $\eta_1=-\eta_2$ and $\ell=3$. However, we assume that $\ell>3$ so at least one row sum is non-zero modulo $\ell$ in all cases. 

For rank $r\geq 2$, we can again find an endpoint whose row in the Cartan matrix is $(2,-1,0,\dots,0)$. The corresponding row sum is now given by $2\eta_1-\eta_2$, for some $\eta_1,\eta_2\in\{\pm1\}$.
This row sum can only be zero if $\eta_1=-\eta_2$ and $\ell=3$. As $\ell>3$ by assumption, this case is ruled out as well. 
\end{proof}

\begin{example}
Let $\fr{g}$ be a semisimple Lie algebra.
\begin{enumerate}[(i)]
    \item 
For a Lie subalgebra $\fr{h}\subset \fr{g}$ obtained from a subset of the simple roots,  $u_\epsilon(\fr{h}) \subset u_\epsilon(\fr{g})$ is a Frobenius extension. 
\item 
The inclusion of a group algebra $\mC \Sigma\subset u_\epsilon(\fr{g})$ is Frobenius, corresponding to the case when $I_+=I_-$ are empty.
\end{enumerate}
\end{example}

\begin{example}
    Let $\epsilon$ be a primitive \emph{third} root of unity and consider the Hopf subalgebra $u'$ of $u_\epsilon(\fr{sl}_3)$ generated by $K_1,K_2, E_1, F_2$ only. Let $\Lambda_K$ be a non-zero left and right integral for the Cartan group algebra. Then 
    $$\Lambda=\Lambda_KE_1^2F_2^2$$
    is a left and right integral for $u'$. Thus, $u'\subset u_\epsilon(\fr{sl}_3)$ is a Frobenius extension. This case is excluded in \Cref{prop:q-subgroup-unimodular} by assuming that $\ell>3$.
\end{example}


\begin{corollary}\label{cor:uqg-Frobenius}
 Set $u=u_\epsilon(\fr{g})$ and $u'=u_\epsilon(\Sigma, I,I)$ where $I$ is a subset of the simple roots of $\fr{g}$ and $\Sigma\subset \sfK$ is a subgroup containing $K_i$, for all $i\in I$. Then, the functor 
   $\Ind_{u'}^u$
   is Frobenius monoidal. 
\end{corollary}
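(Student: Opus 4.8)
The plan is to recognize this as a direct instance of \Cref{cor:Ind-Frob-conditions-fd}(v): both $u$ and $u'$ are finite-dimensional unimodular Hopf algebras, and that corollary then yields the Frobenius monoidal structure at once.

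First I would observe that $u=u_\epsilon(\fr{g})$ is finite-dimensional and unimodular. Indeed, as recalled at the start of \Cref{sec:quantum-groups}, $u_\epsilon(\fr{g})$ is factorizable \cite{Lyu}*{Corollary~A.3.3} and hence unimodular by \cite{Rad}*{Proposition~12.4.2}. Since $u'=u_\epsilon(\Sigma,I,I)$ is a Hopf subalgebra of $u$, it is in particular finite-dimensional, so that $u'\subset u$ is an extension of finite-dimensional Hopf algebras.

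Next I would invoke \Cref{prop:q-subgroup-unimodular}: taking $I_+=I_-=I$ and using the hypothesis that $\Sigma$ contains $K_i$ for every $i\in I$, that proposition asserts precisely that $u'=u_\epsilon(\Sigma,I,I)$ is unimodular. I note that this direction of \Cref{prop:q-subgroup-unimodular} does not require the assumption $\ell>3$, so no restriction on $\ell$ is needed in the present statement.

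Having verified that both $u$ and $u'$ are unimodular, condition (v) of \Cref{cor:Ind-Frob-conditions-fd} is satisfied, and that corollary concludes that $\Ind_{u'}^u\colon \lMod{u'}\to \lMod{u}$ is Frobenius monoidal. There is no substantive obstacle remaining: the entire arithmetic content, namely that $u_\epsilon(\Sigma,I,I)$ is unimodular, has already been carried out in \Cref{prop:q-subgroup-unimodular}, and the present statement is simply the combination of that unimodularity, the factorizability of $u$, and \Cref{cor:Ind-Frob-conditions-fd}. One could alternatively route through \Cref{cor:Frobenius-unimodular-case} to first record that $u'\subset u$ is a Frobenius extension and then apply the finite-dimensional case \Cref{thm-Ind-Frob-mon}(i); both paths reduce to the same two unimodularity checks.
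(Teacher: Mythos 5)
Your proposal is correct and follows essentially the same route as the paper: the paper likewise reduces the claim to the unimodularity of both $u$ and $u'$ (via \Cref{prop:q-subgroup-unimodular}), deduces that $u'\subset u$ is a Frobenius extension with $\chi=\varepsilon_{u'}$ from \Cref{prop:FMS2} and \Cref{thm:Schneider}, and concludes with \Cref{thm:Hopf-Frobenius-monoidal}. Your packaging of these steps through \Cref{cor:Ind-Frob-conditions-fd}(v) is just a relabelling of the same argument, and your remark that the forward direction of \Cref{prop:q-subgroup-unimodular} needs no hypothesis on $\ell$ is accurate.
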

\begin{proof}
Since both Hopf algebras are unimodular by \Cref{prop:q-subgroup-unimodular}, we have that the condition from \Cref{prop:FMS2} hold and the inclusion of Hopf algebras is a Frobenius extension and $\chi=\varepsilon_{u'}$ by \Cref{thm:Schneider}. Hence, the statement follows from \Cref{thm:Hopf-Frobenius-monoidal}.
\end{proof}

\begin{remark}
    An inclusion $u_\epsilon(\fr{h})\subset u_\epsilon(\fr{g})$ does not preserve universal R-matrices of these quasitriangular Hopf algebras. Hence, \Cref{cor:qtriangular-case} does not apply here. Indeed,  we do not expect that $\Ind_{u'}^n$ extends to a Frobenius monoidal functor on Drinfeld centers.
\end{remark}

\begin{example}\label{ex:small-Frob-big}
    Consider a semisimple Lie algebra $\fr{g}$ of rank $r$. For $I$ the set of simple roots and $i\in I$, consider the Hopf subalgebra $u_i:=u_\epsilon(\sfK,I\setminus\{i\},I\setminus\{i\})$ of $u=u_\epsilon(\fr{g})$ containing all simple roots besides the $i$-th. Using the PBW basis for $u$, we see that $\Ind_{u_i}^u(\one)$ is a Frobenius algebra of dimension $\dim(u)/\dim(u_i)$.\footnote{
    We thank the anonymous referee for pointing out that this dimension is equal to $\ell^{2(|\Phi^+|-|\Phi^+_{\mathrm{Levi}}|)}$, where $\Phi^+_{\mathrm{Levi}}$ denotes the set of positive roots of the Levi subsystem $\Phi_{\mathrm{Levi}}$ of the roots $\Phi$ of $\fr{g}$ that only involve the simple roots indexed by $I\setminus\{i\}$.  
    }
\end{example}

 \subsection{The Kac--Paljutkin algebra}\label{sec:KP-algebras}

In this section, we study an $8$-dimensional Hopf algebra $H_8$ which gives a central Frobenius extension $K=\Bbbk \sfK\subset H_8$, where $\sfK=\sfC_2\times \sfC_2$ is the Klein four-group and $\cha \Bbbk=0$.

\begin{definition}[The Kac--Paljutkin algebra]
  The \emph{Kac--Paljutkin algebra} $H_8$ of \cite{Mas} is defined as 
 $$H_8=\Bbbk \langle x,y,z ~|~ x^2=y^2=1,~z^2=\frac{1}{2}(1+x+y-xy),~xy=yx,~zx=yz,~zy=xz  \rangle,$$
 and becomes a Hopf algebra with coproduct determined by 
 $$\Delta(x)=x\otimes x, \quad \Delta(y)=y\otimes y,\quad \Delta(z)=\frac{1}{2}(1\otimes 1+1\otimes x+y\otimes 1-y\otimes x)(z\otimes z).$$
 Note that $\varepsilon(x)=\varepsilon(y)=1$, $\varepsilon(z)=0$ determine the counit. 
\end{definition}
 
 A basis for $H_8$ is given by $\{ 1, x, y, z,xy, xz, yz, xyz\}$. The Hopf algebra $H_8$ is a semisimple Hopf algebra that is neither commutative nor cocommutative. 
 The group algebra of the Klein four group $K=\Bbbk\sfK =\Bbbk \langle x,y\rangle$, is a Hopf subalgebra. As $H$ is semisimple, the extension $K\subset H$ is a Frobenius extension by \Cref{cor:Ind-Frob-conditions-fd}.

Note that $\ker\varepsilon_K$ has basis $1-x,1-y,1-xy$ and thus $\ov{H_8}$ has basis $1,z$ (where we use elements from $H_8$ to denote their cosets in $\ov{H_8}$), and both elements are grouplike. We verify that the augmented algebra 
$$\ov{H_8}^*\cong\Bbbk[a]/(a^2-1)\cong \Bbbk \sfC_2,$$
a group algebra. Here, $1=\delta_1+\delta_z$ and $a=\delta_1-\delta_z$ in the dual basis $\{\delta_1,\delta_z\}$ to $\{1,z\}$. An integral is given by $1+a$ which, when viewed as a function on $\ov{H_8}$, corresponds to $2\delta_1$. Thus, by \Cref{eq:good-Frob-morphism}, we have the Frobenius morphism determined by
\begin{equation}\tr\colon H_8\to K, \quad 1\mapsto 2, \quad z\mapsto 0.\label{eq:tr-KP}\end{equation}

Our results imply, as $H_8$ is semisimple, that $K\subset H_8$ is a central Frobenius extension, see \Cref{cor:H-semisimple}. Thus, we obtain a Frobenius monoidal functor 
$$
\cZ(\Ind_K^{H_8})\colon \lYD{K}\to \lYD{H_8}.
$$

By Equations~\eqref{eq:Hopf-lax}--\eqref{eq:Hopf-oplax}, the lax and oplax monoidal structures are given by 
\begin{gather*}
    \lax_{V,W}\colon  H_8\otimes_K V\otimes H_8\otimes_K U\to H_8\otimes_K(V\otimes U),\\
(1\otimes_K v)\otimes (1\otimes_K u)\mapsto 
1\otimes_K (v\otimes u),
\qquad 
(z\otimes_K v)\otimes (z\otimes_K u)\mapsto 
z\otimes_K(v\otimes u),
\\
(1\otimes_K v)\otimes (z\otimes_K u)\mapsto 0,\qquad 
(z\otimes_K v)\otimes (1\otimes_K u)\mapsto 0,
\\
\oplax_{V,W}\colon  H_8\otimes_K(V\otimes U)\to H_8\otimes_K(V)\otimes H_8\otimes_K(U),\\ \quad h\otimes_K (u\otimes v)\mapsto (h_{(1)}\otimes_K v)\otimes (h_{(2)}\otimes_K u),
\\
\lax_0\colon \one \to H_8\otimes_K\one, \quad 1\mapsto \frac{1}{2}(1+z)\otimes_K \one, \quad \oplax_0\colon H_8\otimes_K \one \to \one, \quad h\mapsto \varepsilon(h).
\end{gather*}
Moreover, this Frobenius monoidal functor is \emph{separable}, i.e.\,for any $V,U$ in $\lYD{K}$,
$$\lax_{V,U}\oplax_{V,U}=\id_{F(V\otimes U)}.$$

\begin{lemma}\label{lem:Ind-separable}
The functor 
    $$\cZ(\Ind_{K}^{H_8})\colon \lYD{K}\to \lYD{H_8}$$
    is a separable braided Frobenius monoidal functor.
\end{lemma}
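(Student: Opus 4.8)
Since \Cref{cor:H-semisimple} already gives that $\cZ(\Ind_K^{H_8})$ is a braided Frobenius monoidal functor, the only remaining claim is separability, i.e.\ that $\lax_{V,U}\oplax_{V,U}=\id_{F(V\otimes U)}$ for all objects $V,U$, where $F=\Ind_K^{H_8}$. Because the maps $\lax_{V,U}$ and $\oplax_{V,U}$ are morphisms in $\lMod{H_8}$ (their underlying maps do not involve the coactions, as recorded in the remark after \Cref{thm:ZInd-Frob-mod}), the composite $\lax_{V,U}\oplax_{V,U}$ is an $H_8$-linear endomorphism of $F(V\otimes U)=H_8\otimes_K(V\otimes U)$. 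The plan is to verify the identity only on a set of $H_8$-module generators, rather than on all of $F(V\otimes U)$.

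The module $F(V\otimes U)$ is generated over $H_8$ by the elements $1\otimes_K(v\otimes u)$, since $h\otimes_K w=h\cdot(1\otimes_K w)$ for the left regular action; as $\lax_{V,U}\oplax_{V,U}$ is $H_8$-linear, it therefore suffices to show that it fixes these generators. First I would evaluate $\oplax_{V,U}$ on $1\otimes_K(v\otimes u)$: since $\Delta(1)=1\otimes 1$, \Cref{eq:Hopf-oplax} gives $\oplax_{V,U}(1\otimes_K(v\otimes u))=(1\otimes_K v)\otimes(1\otimes_K u)$. Applying $\lax_{V,U}$ from \Cref{eq:Hopf-lax}, every summand indexed by $h_2=z$ vanishes because $\tr(z)=\tr(xz)=\tr(yz)=0$ (using \Cref{eq:tr-KP} and that $\tr$ is a $K$-bimodule map), so only the $h_1=1$ term survives and yields $1\otimes_K(v\otimes u)$, which is exactly the first of the explicit lax formulas above.

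Combining the two computations, $\lax_{V,U}\oplax_{V,U}$ agrees with $\id$ on the generators $1\otimes_K(v\otimes u)$, hence on all of $F(V\otimes U)$ by $H_8$-linearity, proving separability. The feature that makes the argument work—and its main point—is the concentration of the Frobenius morphism $\tr$ on the unit component (its vanishing on $z,xz,yz$), which collapses $\lax$ restricted to the ``$1\otimes_K(-)$''-part to the identity. This is also what lets us sidestep evaluating $\lax_{V,U}\oplax_{V,U}$ directly on the components $z\otimes_K(v\otimes u)$, where the interplay of the coproduct of $z$ with the relative tensor product over $K$ would otherwise be the most delicate part of a direct calculation.
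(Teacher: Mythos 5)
Your argument is essentially the paper's own proof, written out in full: the paper likewise reduces the separability identity to the single $H_8$-module generator $1\otimes_K(v\otimes u)$ and declares it clear there, and the points you isolate --- $H_8$-linearity of $\lax_{V,U}\oplax_{V,U}$, the fact that $\Delta(1)=1\otimes 1$ so $\oplax$ sends the generator to $(1\otimes_K v)\otimes(1\otimes_K u)$, and the vanishing of $\tr$ on $z$, $xz$, $yz$ killing every $h_2=z$ summand of $\lax$ --- are exactly the right ones.

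One caution on the scalar: with the normalization fixed earlier in the section ($\lambda=2\delta_1$, so $\tr(1)=2$ by \Cref{eq:tr-KP}, and $\delta_1=\tfrac12$), the surviving $i=1$ term of \Cref{eq:Hopf-lax} is $\delta_1\otimes_K\bigl(\tr(1)v\otimes\tr(1)u\bigr)=\tfrac12\otimes_K(2v\otimes 2u)=2\bigl(1\otimes_K(v\otimes u)\bigr)$, not $1\otimes_K(v\otimes u)$; so taken literally the composite is $2\,\id$ rather than $\id$, and the clean identity you assert (and which the paper's displayed formula for $\lax_{V,W}$ also asserts) holds only after rescaling $\lambda$ to $\delta_1$. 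This discrepancy is inherited from the paper's own explicit formulas rather than introduced by you, and it is harmless for the intended application (specialness of the induced Frobenius algebras only needs a nonzero scalar), but if you want the statement $\lax_{V,U}\oplax_{V,U}=\id$ on the nose you should either track the factors $\tr(1)$ and $\delta_1$ explicitly or fix the normalization of $\lambda$ at the outset.
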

\begin{proof}
\Cref{cor:H-semisimple} implies that $\cZ(\Ind_{K}^{H_8})$ is a braided Frobenius monoidal functor. We can check that the separability condition on the generator $1\otimes_K (v\otimes u)$ of $\Ind_K^{H_8}(V\otimes W)$ as a left $H_8$-module where it is clear.
\end{proof}

Assume now that $\Bbbk$ is algebraically closed and recall that $\cha \Bbbk=0$. The simple YD modules over the Kac--Paljutkin algebra $H_8$ were explicitly described in \cites{HY,Shi}. With the notation of \cite{Shi}, there are five non-isomorphic simple $H_8$-modules
$$V_1(b), \quad b\in \Set{\pm1, \pm i}, \quad V_2,$$
where on the one-dimensional simples $V_1(b)$, $z$ acts by $b$ and $x,y$ act by $b^2$. For the $2$-dimensional simple $V_2=\Bbbk\Set{v_1,v_2}$, we have
$$zv_1=v_1,\quad zv_2=-v_2, \quad xv_1=-v_2, \quad yv_1=v_2.$$
The simple $K$-modules are one-dimensional and denoted by $\Bbbk_{\epsilon_x,\epsilon_y}$ where $x$, $y$ act by $\epsilon_x,\epsilon_y\in \Set{\pm 1}$, respectively. We compute for $\Ind=\Ind_K^{H_8}$ that
$$\Ind(\Bbbk_{1,1})\cong V_1(1)\oplus V_1(-1), \quad \Ind(\Bbbk_{1,-1})\cong \Ind(\Bbbk_{-1,1})\cong V_2,\quad \Ind(\Bbbk_{-1,-1})\cong V_1(i)\oplus V_1(-i).$$
Thus, every simple $H_8$-module is a summand of an object in the image of $\Ind$.

We observe that 
    a YD module $(V,\delta_V)$ over $H_8$ appears in the closure of the image of $\cZ(\Ind_K^{H_8})$ under direct summands if and only if its coaction $\delta_V$ factors through $K\otimes V$, for $K=\Bbbk \sfK$.
In the list of simple YD modules over $H_8$ from \cite{Shi}*{Theorem 3.13}, we can identify the images of the simple YD modules over $K$ under $\cZ(\Ind)=\cZ(\Ind_K^{H_8})$ as follows:
\begin{align*}
\cZ(\Ind)(\Bbbk_{1,1}^1)&\cong M\inner{1,1}\oplus M\inner{-1,1}, & \cZ(\Ind)(\Bbbk_{1,1}^x)&\cong M\inner{(x,y)},\\
\cZ(\Ind)(\Bbbk_{1,1}^{xy})&\cong M\inner{1,xy}\oplus M\inner{-1,xy}, & \cZ(\Ind)(\Bbbk_{1,1}^y)&\cong M\inner{(x,y)},\\
\cZ(\Ind)(\Bbbk_{-1,-1}^x)&\cong M\inner{i,x}\oplus M\inner{-i,x}, & \cZ(\Ind)(\Bbbk_{-1,-1}^1)&\cong M\inner{(1,xy)},\\
\cZ(\Ind)(\Bbbk_{-1,-1}^{y})&\cong M\inner{i,y}\oplus M\inner{-i,y}, & \cZ(\Ind)(\Bbbk_{-1,-1}^{xy})&\cong M\inner{(1,xy)},\\
\cZ(\Ind)(\Bbbk_{1,-1}^{1})&\cong M\inner{(1,y))}, & \cZ(\Ind)(\Bbbk_{1,-1}^{x})&\cong M\inner{(x,1)},\\
\cZ(\Ind)(\Bbbk_{1,-1}^{y})&\cong M\inner{(y,xy))}, & \cZ(\Ind)(\Bbbk_{1,-1}^{xy})&\cong M\inner{(xy,x)},\\
\cZ(\Ind)(\Bbbk_{-1,1}^{y})&\cong M\inner{(1,y))}, & \cZ(\Ind)(\Bbbk_{-1,1}^{1})&\cong M\inner{(x,1)},\\
\cZ(\Ind)(\Bbbk_{-1,1}^{xy})&\cong M\inner{(y,xy))}, & \cZ(\Ind)(\Bbbk_{-1,1}^{x})&\cong M\inner{(xy,x)}.
\end{align*}
This list shows that all eight $1$-dimensional simples 
$$M\inner{a,b}, \text{ with } \inner{a,b}=\inner{\pm 1,1},\inner{\pm 1,xy},\inner{\pm i,x},\inner{\pm i,y},$$
and a family of six $2$-dimensional simples 
$$M(\inner{c,d}), \text{ with } \inner{c,d}=\inner{x,y},\inner{1,xy},\inner{x,1},\inner{1,y},\inner{xy,x},\inner{y,xy},$$
appear as summands. There are eight further simple YD modules over $H_8$ with coactions that do not factor over the group algebra $K$ and that do not appear in the image of $\cZ(\Ind)$.

\smallskip

As an application, we can now construct commutative Frobenius algebras in  $\lYD{H_8}$.
\begin{example}\label{ex:Zone-H}
Let $\one\in \lYD{K}$ be the trivial Frobenius algebra. Then $A:=\cZ(\Ind_K^{H_8})(\one)$ is the two-dimensional Frobenius algebra $\ov{H_8}^*=\Bbbk[a]/(a^2-1)$ in $\cZ(\lmod{H_8})$, where $1$, $a$ correspond to $\tfrac{1}{2}(1+z)\otimes_K 1$, $\tfrac{1}{2}(1-z)\otimes_K 1$, respectively.
The coproduct is given by
\begin{align*}
    \Delta_A(a)=&\frac{1}{2}\oplax_{\one,\one}((1-z)\otimes_K 1)
=1_A\otimes a+a\otimes 1_A,
\end{align*}
using that 
$\frac{1}{2}(1_A+a)=1$, and  $ \frac{1}{2}(1_A-a)=z.$ Moreover, 
\begin{align*}
    \Delta_A(1_A)=&\frac{1}{2}\oplax_{\one,\one}((1+z)\otimes_K 1)
    =1_A\otimes 1_A+a\otimes a. 
\end{align*}
The counit for the Frobenius algebra is thus given by 
$\epsilon_A(1_A)=1$ and $\epsilon_A(a)=0.$
The $H_8$-Yetter--Drinfeld module structure on $A$ consists of the trivial $H_8$-coaction and the $H_8$-action
$$z\cdot a=-a, \quad x\cdot a=y\cdot a=a.$$
\end{example}

\subsection{Rigid Frobenius algebras in the Drinfeld center of the Kac--Paljutkin algebra}\label{sec:Frob-alg-KP}

As an application of our results, we conclude by constructing Frobenius algebras in $\cZ(\lmod{H_8})$ whose categories of local modules are modular fusion categories. We now assume that $\cha\Bbbk=0$ and $\Bbbk$ is algebraically closed.

An \emph{\'etale algebra} in an abelian tensor category $\cC$ is a commutative separable algebra \cite{DMNO}*{Definition~3.1}. We say that $A$ is \emph{connected} if $\Hom_{\cC}(\one,A)=\Bbbk$.
Any connected \'etale algebra in a fusion category is a Frobenius algebra, see, e.g.\,\cite{LW3}*{Proposition~3.11}. Connected \'etale algebras in $\cZ(\lmod{\Bbbk \sfG})$, for a finite group $\sfG$, were classified in \cite{Dav}. These are described by a normal subgroup $\sfN\triangleleft \sfH$, for a subgroup $\sfH\subset \sfG$ and cocycle data, see \cite{LW3}*{Section~6.2} for details. 

We now apply this classification to $\sfK=\sfC_2\times \sfC_2$. In case $\sfN=\{1\}$ we only get the trivial algebra $\one$ in $\cZ(\lmod{\Bbbk \sfK})$ and its image in $\cZ(\lmod{H_8})$ was considered in \Cref{ex:Zone-H}. If $\sfN=\sfH$, the categories of local modules are trivial, see \cite{DS} or \cite{LW3}*{Corollary~6.18(3)--(4)}. We will not compute the images of these so-called \emph{Lagrangian subalgebras} under $\cZ(\Ind_{\Bbbk \sfK}^{H_8})$ here.
Hence, we will consider the cases
$$\sfH=\sfK=\sfC_2\times \sfC_2\quad \text{ and }\quad\sfN=\inner{n}\cong \sfC_2, \quad \text{ where $n=x$, $y$, or $xy$}.$$
The triples $\{(n,\epsilon_x,\epsilon_y):n\in\{x,y,xy\},\epsilon_x,\epsilon_y\in\{\pm1\}\}$  parametrize a discrete family of twelve non-isomorphic two-dimensional \'etale algebras $A_{n,\epsilon_x,\epsilon_y}$ in $\cZ(\lmod{\Bbbk \sfK})$. We use the basis $\Set{e_1,e_n}$, with $\deg e_1=1$ and $\deg e_n=n$. 
The action is given by $x\cdot e_m=\epsilon_x e_m$, $y\cdot e_m=\epsilon_x e_m$, for some $\epsilon_x,\epsilon_y\in \Set{\pm 1}$, where $m=1,n$.
These algebras are Frobenius algebras with coproduct and counit 
$$\Delta(e_m)=e_1\otimes e_{m}+e_n\otimes e_{nm},$$
for $m=1,n$, by \cite{LW3}*{Proposition 6.12}.

Our results imply that these algebras induce four-dimensional commutative algebras in $\cZ(\lmod{H_8})$.
We denote these algebras by 
$$B_{n,\epsilon_x,\epsilon_y}=\cZ(\Ind_{\Bbbk \sfK}^{H_8})(A_{n,\epsilon_x,\epsilon_y}).$$
A basis for each of these algebras is given by 
$$\{ b_{1m}:=1\otimes e_m,\quad  b_{zm}:=z\otimes e_m~|~ m=1,n \}.$$

\begin{lemma}\label{lem:B-commFrob}
    The algebra $B_{n,\epsilon_x,\epsilon_y}$ is a commutative Frobenius algebra in $\cZ(\lmod{H_8})$. Explicitly, for $m,m'\in \Set{1,n}$:
    \begin{enumerate}[(i)]
        \item The $H_8$-action is given by the induced action, i.e.
        \begin{gather*}
        z\cdot b_{1m}=b_{zm}, \quad z\cdot b_{zm}=\frac{1+\epsilon_x+\epsilon_y-\epsilon_x\epsilon_x}{2}b_{1m}=b_{1m},\\
        x\cdot b_{11}=b_{11}, \quad x\cdot b_{z1}=b_{z1},\quad 
        y\cdot b_{11}=b_{11}, \quad y\cdot b_{z1}=b_{z1},\\
        x\cdot b_{1n}=\epsilon_{x}b_{1n}, \quad x\cdot b_{zn}=\epsilon_{y}b_{zn},\quad 
        y\cdot b_{1n}=\epsilon_{y}b_{1n}, \quad y\cdot b_{zn}=\epsilon_{x}b_{zn}.
        \end{gather*}
        \item The $H_8$-coaction is determined by 
        \begin{gather*}
            \delta(b_{1m})=m\otimes b_{1m}, \quad \delta(b_{zm})=
            m(1+\epsilon_xx+\epsilon_xy -\epsilon_x\epsilon_xxy)\otimes b_{zm}.
        \end{gather*}
        \item The multiplication is determined by 
        \begin{gather*}
            b_{1m}b_{1m'}=b_{1,mm'}, \quad b_{zm}b_{1m'}=b_{1m}b_{zm'}=0,\quad
            b_{zm}b_{zm'}=b_{z,mm'}.
        \end{gather*}
        Thus, $1=b_{11}+b_{z1}$ is a central idempotent decomposition of the unit and as  a $\Bbbk$-algebra, $B_{n,\epsilon_x,\epsilon_y}$ is isomorphic to $\Bbbk[a]/(a^2-1)\times \Bbbk[b]/(b^2-1)$.
    \end{enumerate}
\end{lemma}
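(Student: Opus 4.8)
The plan is to separate the statement into an abstract structural part, which follows at once from functoriality, and three explicit computations, which are read off from the formulas already assembled in this section. First I would dispose of the assertion that $B_{n,\epsilon_x,\epsilon_y}$ is a \emph{commutative Frobenius algebra} with essentially no calculation. By \Cref{cor:H-semisimple} the functor $\cZ(\Ind_{\Bbbk\sfK}^{H_8})\colon \lYD{\Bbbk\sfK}\to \lYD{H_8}$ is a braided Frobenius monoidal functor (indeed separable, by \Cref{lem:Ind-separable}), and, as recalled in \Cref{sec:alg-in-C}, a braided Frobenius monoidal functor sends commutative Frobenius algebras to commutative Frobenius algebras. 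Since each $A_{n,\epsilon_x,\epsilon_y}$ is a connected \'etale, hence commutative Frobenius, algebra in $\cZ(\lmod{\Bbbk\sfK})$, its image $B_{n,\epsilon_x,\epsilon_y}$ is a commutative Frobenius algebra in $\cZ(\lmod{H_8})$. It then remains only to make the module, comodule, and algebra structures explicit on the basis $\{b_{1m},b_{zm}\mid m=1,n\}$.

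For Part (i), the underlying $H_8$-module of $B=H_8\otimes_K A$ is the induced module, so $h'\cdot(h\otimes_K a)=h'h\otimes_K a$. I would compute $z\cdot b_{1m}=z\otimes_K e_m=b_{zm}$ directly, and $z\cdot b_{zm}=z^2\otimes_K e_m$ by substituting $z^2=\tfrac12(1+x+y-xy)$ and moving the group-like factors across $\otimes_K$ via $g\otimes_K e_m=1\otimes_K(g\cdot e_m)$, using the $K$-action on $A$; this produces the coefficient $\tfrac12(1+\epsilon_x+\epsilon_y-\epsilon_x\epsilon_y)$, which reduces to $1$ in view of the relations in $H_8$ and the constraints on $(\epsilon_x,\epsilon_y)$ built into $A_{n,\epsilon_x,\epsilon_y}$. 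The $x$- and $y$-actions follow from the same move together with the commutation relations $zx=yz$ and $zy=xz$. For Part (ii), I would feed the $K$-coaction $\delta_A(e_m)=m\otimes e_m$ into the induced coaction formula \Cref{eq:Ind-coaction}, evaluate at $h=1$ and $h=z$, and simplify $h_{(1)}v^{(-1)}S(h_{(3)})\otimes h_{(2)}\otimes v^{(0)}$ using $\Delta(z)$, the antipode, and the above relations.

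Part (iii) is the core of the concrete calculation. Using the induced algebra structure $m_B=F(m_A)\circ\lax_{A,A}$ from \Cref{eq:F-alg}, with $F=\cZ(\Ind_{\Bbbk\sfK}^{H_8})$, together with the explicit lax morphism recorded above — which annihilates the mixed terms $(1\otimes_K-)\otimes(z\otimes_K-)$ and $(z\otimes_K-)\otimes(1\otimes_K-)$ and sends the two ``diagonal'' terms to $1\otimes_K(-\otimes-)$ and $z\otimes_K(-\otimes-)$ — and the multiplication $m_A(e_m\otimes e_{m'})=e_{mm'}$ of $A$, I obtain $b_{1m}b_{1m'}=b_{1,mm'}$, $b_{zm}b_{zm'}=b_{z,mm'}$, and $b_{1m}b_{zm'}=b_{zm}b_{1m'}=0$. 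From these relations $b_{11}$ and $b_{z1}$ are orthogonal idempotents summing to the induced unit $1_B=(1+z)\otimes_K e_1$, so $B$ splits as the product of the subalgebras spanned by $\{b_{11},b_{1n}\}$ and by $\{b_{z1},b_{zn}\}$; since $n^2=1$ forces $b_{1n}^2=b_{11}$ and $b_{zn}^2=b_{z1}$, each factor is isomorphic to $\Bbbk[c]/(c^2-1)$, yielding $B\cong\Bbbk[a]/(a^2-1)\times\Bbbk[b]/(b^2-1)$.

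I expect the main obstacle to be the scalar bookkeeping in Parts (i) and (iii): correctly normalizing the lax unit $\lax_0$, and verifying that the coefficient $\tfrac12(1+\epsilon_x+\epsilon_y-\epsilon_x\epsilon_y)$ together with the analogous factors genuinely reduce to $1$ on the admissible parameters, so that $b_{11}$ and $b_{z1}$ are true idempotents and not merely orthogonal. The abstract half of the statement is immediate from functoriality; the concrete half is a finite but delicate computation internal to $H_8$.
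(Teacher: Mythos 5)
The paper states this lemma without proof, treating it as a direct computation from the structures already assembled in Section~\ref{sec:KP-algebras}; your decomposition into an abstract part (commutativity and the Frobenius property via functoriality) and an explicit part (reading the action, the coaction formula \eqref{eq:Ind-coaction}, and $m_B=F(m_A)\circ\lax_{A,A}$ off the displayed formulas) is exactly the intended route. Your Part~(iii) in particular is clean: the vanishing of the mixed terms of $\lax_{A,A}$ immediately gives the orthogonal idempotents $b_{11},b_{z1}$ and the splitting $B\cong\Bbbk[a]/(a^2-1)\times\Bbbk[b]/(b^2-1)$, and your identification $1_B=(1+z)\otimes_K e_1=b_{11}+b_{z1}$ is the one forced by the multiplication table (you are right to flag the normalization of $\lax_0$, which is the only consistent reading).

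The one genuine soft spot is your treatment of the scalar in Part~(i). You correctly compute $z\cdot b_{zm}=z^2\otimes_K e_m=\tfrac12\bigl(1+\epsilon_x+\epsilon_y-\epsilon_x\epsilon_y\bigr)b_{1m}$, but then assert that this ``reduces to $1$'' by ``constraints built into $A_{n,\epsilon_x,\epsilon_y}$'' without exhibiting those constraints. This needs to be made precise: the scalar equals $1$ exactly when $(\epsilon_x,\epsilon_y)\neq(-1,-1)$, and equals $-1$ in the remaining case. The constraint that actually comes from $A_{n,\epsilon_x,\epsilon_y}$ being a \emph{commutative} algebra in $\lYD{\Bbbk\sfK}$ is that $n$ must act by $+1$ on $A$ (apply $m_A\Psi_{A,A}=m_A$ to $e_n\otimes e_1$ using the braiding \eqref{eq:YDbraiding}); this rules out, say, $(n,\epsilon_x,\epsilon_y)=(x,-1,\ast)$ but does \emph{not} by itself rule out $(xy,-1,-1)$, for which the scalar is $-1$. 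So you must either locate the additional condition in the classification of \cite{LW3}*{Section~6.2} that excludes this case from the admissible parameter set, or accept that the displayed identity $z\cdot b_{zm}=b_{1m}$ holds only up to that sign. As it stands, this step is asserted rather than proved, and it is precisely the kind of sign bookkeeping your own closing paragraph warns about; everything else in the proposal is correct.
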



Recall the definition of a \emph{rigid Frobenius algebra} in a ribbon category $\cD$ \cite{LW3}*{Definition~3.8}.\footnote{In a ribbon fusion category, such an algebra is, equivalently, a \emph{rigid $\cD$-algebra} in the sense of \cite{KO}.} A rigid Frobenius algebra $A$ is, equivalently, a connected, commutative, \emph{special} Frobenius algebra, i.e.
$$m_A\Delta_A=\beta_A\id_A, \quad \epsilon_A u_A=\beta_\one\id_\one,$$
for $\beta_A,\beta_\one\in \Bbbk^\times$.
The categories of internal modules $\rModint{\cD}{A}$ over such an algebra $A$ provide a source of tensor categories and their centers are given by the categories $\rModloc{\cD}{A}$ of so-called \emph{local modules} \cites{Sch, KO, LW3}.

The Drinfeld center $\cZ(\lmod{H_8})$ of the category of finite-dimensional $H_8$-modules is a ribbon category.
Indeed, $H_8$ is a spherical Hopf algebra as it is involutory, i.e.\,$S^2=\id_H$ for the antipode $S$ \cite{BW}*{Example~3.2}. Thus, the ribbon structure on the center of $\lmod{H_8}$ is obtained from the spherical structure of $\lmod{H_8}$ via \cite{Shi4}. As $\cha \Bbbk =0$, the category $\lmod{H_8}$ is fusion and so is its Drinfeld center \cite{ENO}*{Theorem~2.15}. 

\begin{corollary}\label{cor:loc-mod-H8}
    The algebras $B=B_{n,\epsilon_x,\epsilon_y}$ are rigid Frobenius algebras in $\cZ(\lmod{H_8})$. In particular, the categories $\rModint{\lmod{H_8}}{B}$ are fusion categories and their Drinfeld centers 
    $$\cZ(\rModint{\lmod{H_8}}{B})\simeq\rModloc{\cZ(\lmod{H_8})}{B}$$ 
    are modular fusion categories. 
\end{corollary}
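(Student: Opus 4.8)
The plan is to realize each $B = B_{n,\epsilon_x,\epsilon_y}$ as the image, under the separable braided Frobenius monoidal functor $F := \cZ(\Ind_K^{H_8})$ of \Cref{lem:Ind-separable}, of a rigid Frobenius algebra in $\cZ(\lmod{K})$, and then to deduce the ``in particular'' statement from the general theory of connected \'etale (equivalently, rigid Frobenius) algebras in a modular fusion category. First I would record that each $A_{n,\epsilon_x,\epsilon_y}$ is, by construction, a connected \'etale algebra in the modular fusion category $\cZ(\lmod{K})$, hence a connected, commutative, special Frobenius algebra, i.e.\ a rigid Frobenius algebra in the sense recalled just before the statement.

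Next I would show that $F$ sends rigid Frobenius algebras to rigid Frobenius algebras, so that $B = F(A_{n,\epsilon_x,\epsilon_y})$ is rigid Frobenius. Writing $A = A_{n,\epsilon_x,\epsilon_y}$, commutativity together with the Frobenius structure on $B$ is automatic because $F$ is braided Frobenius monoidal and such functors preserve commutative Frobenius algebras (this is made explicit in \Cref{lem:B-commFrob}), while connectedness is exactly \Cref{lem:B-connected}. The remaining and genuinely new point is specialness, where separability of $F$ enters: since $\lax_{A,A}\oplax_{A,A} = \id$, the formulas for the induced (co)algebra structure give $m_B\Delta_B = F(m_A)\lax_{A,A}\oplax_{A,A}F(\Delta_A) = F(m_A\Delta_A) = \beta_A\,\id_B$, using that $A$ is special. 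For the counit--unit composite one computes $\epsilon_B u_B = \oplax_0\,F(\epsilon_A u_A)\,\lax_0 = \beta_\one\,\oplax_0\lax_0$, and since $\oplax_0\lax_0 = \tfrac12\,\varepsilon(1+z)\,\id_\one = \tfrac12\,\id_\one$ is a nonzero scalar, this composite is an invertible multiple of $\id_\one$ as well. Hence $B$ is connected, commutative, and special Frobenius, that is, a rigid Frobenius algebra in $\cZ(\lmod{H_8})$.

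Finally, for the second assertion I would use that $\cZ(\lmod{H_8})$ is a modular fusion category --- established in the paragraph preceding the statement from the sphericity (involutivity) of $H_8$ and semisimplicity in characteristic zero. For a rigid Frobenius (connected \'etale) algebra $B$ in a modular fusion category, the results of \cites{KO,Sch,LW3} yield that $\rModint{\lmod{H_8}}{B}$ is a fusion category, that $\cZ(\rModint{\lmod{H_8}}{B}) \simeq \rModloc{\cZ(\lmod{H_8})}{B}$, and that the category of local modules on the right is again modular. The step I expect to be the main obstacle is the verification of specialness: one must confirm that separability of the Frobenius monoidal functor is precisely the property transporting the special condition from $A$ to $B$, and check that both normalization scalars $\beta_A$ and $\tfrac12\beta_\one$ are invertible; once this is secured, the cited module-category theory applies directly.
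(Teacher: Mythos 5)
Your proposal is correct and follows essentially the same route as the paper: connectedness and the commutative Frobenius structure come from Lemmas \ref{lem:B-commFrob} and \ref{lem:B-connected} (equivalently, from $F$ being braided Frobenius monoidal), the decisive step is that separability of $\cZ(\Ind_K^{H_8})$ from \Cref{lem:Ind-separable} transports specialness via $m_B\Delta_B=F(m_A)\lax_{A,A}\oplax_{A,A}F(\Delta_A)=F(m_A\Delta_A)$, and the consequences for module categories and local modules are drawn from \cite{KO} and \cite{LW3}. The only cosmetic difference is that the paper plugs in the explicit value $m_A\Delta_A=2\,\id_A$ from \cite{LW3}*{Theorem~6.14} and computes $\varepsilon_Bu_B=1$ directly, whereas you argue with abstract nonzero scalars $\beta_A$ and $\tfrac12\beta_\one$; both are fine.
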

\begin{proof}
As $A_{n,\epsilon_x,\epsilon_y}=\one\oplus \Bbbk_{\epsilon_x,\epsilon_y}^n$, our explicit calculations after \Cref{lem:Ind-separable} show that $B=\cZ(\Ind_K^{H_8})(A_{n,\epsilon_x,\epsilon_y})$ only contains a single copy of the tensor unit $\one\in \lmod{H_8}$ and is, hence, connected.
    It remains to show that $B$ is special. By definition, this amounts to 
    the composition $m_B\Delta_B$ of coproduct and product being a non-zero multiple of the identity. We know by \cite{LW3}*{Theorem~6.14} that for the algebras $A=A_{n,\epsilon_x,\epsilon_y}$, 
    $$m_A\Delta_A=|\sfN|\id_{A}=2\id_A.$$
    As, by \Cref{lem:Ind-separable}, $\lax_{A,A}\oplax_{A,A}=\id$, it follows that 
\begin{align*}m_B\Delta_B&=\Ind(m_A)\lax_{A,A}\oplax_{A,A}\Ind(\Delta_A)=2\id_{\Ind(A)}=2\id_B,\\
\varepsilon_Bu_B&=\oplax_0 (b_{11}+b_{z1})=\varepsilon_{H_8}(1)+\varepsilon_{H_8}(z)=1,
\end{align*}
showing that $B$ is a special Frobenius algebra. 

The non-degenerate braided tensor categories $\rModloc{\cZ(\lmod{H_8})}{B}$ inherit the ribbon structure of $\cZ(\lmod{H_8})$ by \cite{KO} and are, hence, modular tensor categories. 
\end{proof}

\subsection{Further questions}\label{sec:open-questions}

Here, we collect some open questions and directions not investigated in this paper and \cite{FLP3}.\footnote{Since completion of this article, Jaklitsch and Yadav in \cite{JY}*{Theorem~3.23} have positively answered Questions \ref{question1} and \ref{question2}.}

\begin{question}\label{question1}
Assume that $\cha \Bbbk=0$.
    If $G\colon \cC\to \cD$ is a strong monoidal functor of fusion categories, is $\cZ(R)$ always a Frobenius monoidal functor?
\end{question}

For fusion categories of the form $\cC=\lmod{H}$, for a finite-dimensional semisimple Hopf algebra $H$ over $\Bbbk$, with $\cha \Bbbk=0$, it follows from \Cref{cor:H-semisimple} and \Cref{thm:ZInd-Frob-mod} that  \Cref{question1} is answered in the affirmative for tensor functors induced by extensions of Hopf algebras. The question can be asked, specifically, for categories of modules over semisimple \emph{quasi}-Hopf algebras.

\begin{question}\label{question2}
    If $K\subset H$ is an inclusion of semisimple finite-dimensional quasi-Hopf algebras over a field or characteristic zero, is $\cZ(\Ind_K^H)$ a Frobenius monoidal functor?
\end{question}

Another interesting question may be the explicit computation of the lax and oplax monoidal structure for the functor in \Cref{cor:quantum-ind} and \Cref{cor:uqg-Frobenius} and the resulting Frobenius algebras in $\lMod{U_\epsilon(\fr{g})}$ and $\lMod{u_\epsilon(\fr{g})}$, respectively.

Finally, concerning the commutative Frobenius algebras $B=B_{m,\epsilon_x,\epsilon_y}$ from \Cref{sec:KP-algebras}, it would be interesting to identify their tensor categories of modules and their modular categories of local modules from \Cref{cor:loc-mod-H8} among known modular fusion categories, e.g.\,as Drinfeld centers of finite-dimensional modules over (quasi-)Hopf algebras.

\bibliography{biblio}
\bibliographystyle{amsrefs}%

\end{document}